\documentclass[12pt]{article}
\usepackage[utf8]{inputenc}
\usepackage{amsmath,amsthm,amssymb, tikz-cd}
\usepackage{mathrsfs}
\usepackage[normalem]{ulem}
\pagestyle{plain}
\textwidth 16cm
\oddsidemargin -0.0cm
\topmargin -1.3cm
\textheight 22cm
\parskip 0.0cm
\DeclareMathOperator{\qdim}{qdim}
\DeclareMathOperator{\Irr}{Irr}

\DeclareMathOperator{\diag}{diag}
\DeclareMathOperator{\glob}{glob}
\begin{document}
\input amssym.def
\setcounter{equation}{0}
\newcommand{\wt}{{\rm wt}}
\newcommand{\pt}{{\operatorname{pt}}}
\newcommand{\rank}{{\operatorname{rank}}}
\newcommand{\Quad}{{\operatorname{Quad}}}
\newcommand{\spa}{\mbox{span}}
\newcommand{\Res}{\mbox{Res}}
\newcommand{\End}{\mbox{End}}
\newcommand{\Ind}{\mbox{Ind}}
\newcommand{\Hom}{\mbox{Hom}}
\newcommand{\Mod}{\mbox{Mod}}
\newcommand{\Rep}{\mbox{Rep}}
\newcommand{\m}{\mbox{mod}\ }
\renewcommand{\theequation}{\thesection.\arabic{equation}}
\numberwithin{equation}{section}
\def\lcm{\operatorname{lcm}}
\def\ord{\operatorname{ord}}
\def\FSexp{\operatorname{FSexp}}
\def \1{{\mathbf{1}}}
\def \b1{{Bbb 1}}
\def \End{{\rm End}}
\def \Aut{{\rm Aut}}
\def \Z{\mathbb Z}
\def \H{\mathbf{H}}
\def \M{\Bbb M}
\def \C{\mathbb C}
\def \BR{\mathbb R}
\def \Q{\mathbb Q}
\def \N{\mathbb N}
\def \ann{{\rm Ann}}
\def \<{\langle}
\def \o{\omega}
\def \O{\Omega}
\def \Or{{\cal O}}
\def \M{{\cal M}}
\def \1t{\frac{1}{T}}
\def \>{\rangle}
\def \t{\tau }
\def \a{\alpha }
\def \w{\omega}
\def \e{\epsilon }
\def \l{\lambda }
\def \L{\Lambda }
\def \g{\gamma}
\def \G{\Gamma}
\def \b{\beta }
\def \om{\omega }
\def \o{\omega }
\def \ot{\otimes}
\def \cg{\chi_g}
\def \ag{\alpha_g}
\def \ah{\alpha_h}
\def \ph{\psi_h}
\def \S{\cal S}
\def \Cc{\cal C}
\def \Bb{\cal B}
\def \Aa{\cal A}
\def \ZZ{\cal Z}
\def \Dd{\cal D}
\def \nor{\vartriangleleft}
\def \V{V^{\natural}}
\def\voa{vertex operator algebra\ }
\def \vosa{vertex operator superalgebra\ }
\def \vosas{vertex operator superalgebras\ }
\def \voas{vertex operator algebras}
\def \v{vertex operator algebra\ }
\def \1{{\bf 1}}
\def \be{\begin{equation}\label}
\def \ee{\end{equation}}
\def \qed{\mbox{ $\square$}}
\def \pf {\noindent {\bf Proof:} \,}
\def \bl{\begin{lem}\label}
\def \el{\end{lem}}
\def \ba{\begin{array}}
\def \ea{\end{array}}
\def \bt{\begin{thm}\label}
\def \et{\end{thm}}
\def \br{\begin{rem}\label}
\def \er{\end{rem}}
\def \ed{\end{de}}
\def \bp{\begin{prop}\label}
\def \ep{\end{prop}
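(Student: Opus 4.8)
The plan is to reduce the statement to the arithmetic of the modular data $(S,T)$ of the module category and then exploit its Galois symmetry. First I would fix notation: let $\{M_i\}$ index $\Irr$, let $S=(S_{ij})$ and $T=\diag(\theta_i)$ with $\theta_i=e^{2\pi i(h_i-c/24)}$ be the matrices of the $SL_2(\Z)$-action on the characters, and set $N=\ord(T)$. Invoking the congruence property for this modular data (which identifies $N$ with $\FSexp$), every twist is an $N$-th root of unity and, by the Verlinde formula, each normalised entry $S_{ij}/S_{0j}$ is an algebraic integer lying in $\Q(\zeta_N)$. The quantum dimensions $d_i=S_{0i}/S_{00}$ and the global dimension $\glob=\sum_i d_i^2=S_{00}^{-2}$ therefore also live in $\Q(\zeta_N)$, so the whole problem becomes one about a single fixed abelian number field.

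Next I would invoke the Coste--Gannon Galois symmetry of $S$: for each $\sigma=\sigma_\ell\in\Gal(\Q(\zeta_N)/\Q)$, $\zeta_N\mapsto\zeta_N^\ell$, there is a permutation $\pi_\ell$ of $\Irr$ and signs $\e_\ell(i)\in\{\pm 1\}$ with
\[
\sigma\!\left(\frac{S_{ij}}{S_{00}}\right)=\e_\ell(i)\,\frac{S_{\pi_\ell(i)\,j}}{S_{00}}=\e_\ell(j)\,\frac{S_{i\,\pi_\ell(j)}}{S_{00}},
\]
compatibly, up to the standard gauge anomaly, with the induced action of $\sigma$ on the twists $\theta_i$. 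Applying $\sigma$ to the Verlinde formula and to the identity $\sum_i d_i^2=S_{00}^{-2}$ converts assertions about the divisibility of $N$ into assertions about the Galois orbit of $S_{00}$, which is the bridge to the quantum-dimension side of the claim.

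The decisive and hardest step is the valuation analysis. To obtain the asserted relation between $\FSexp=N$ and the (global) quantum dimension I would localise at a rational prime $p\mid N$, choose a prime of $\Q(\zeta_N)$ above $p$, and track the $p$-adic valuation of $S_{00}$ through the Galois-twisted Verlinde and dimension identities; the signed-permutation structure then forces $p$ to divide $\glob$, with the converse handled symmetrically. The main obstacle is the synchronisation between the order of $T$ and the conductor of the field generated by the $S$-entries---in particular handling the factor-of-two anomaly and ruling out that $S$ is defined over a proper subfield of $\Q(\zeta_N)$---since any mismatch there would sever the link between $\ord(T)$, $\FSexp$, and the primes controlling the Galois orbits. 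I expect this to require the Anderson--Moore / Ng--Schauenburg congruence input together with a careful ramification count, after which the stated result should follow formally.
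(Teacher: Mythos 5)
Your proposal does not engage with what this proposition actually asserts. The statement is a \emph{realization} result: for $G\cong\Z_n$ cyclic or $G\cong D_{2m}$ with $m$ odd, every class in $\MM(\EE)\cong H^3(G,S^1)$, where $\EE=\Rep(G)$, arises as $[\CC_{V^G}]$ for a holomorphic vertex operator algebra $V$ with a faithful $G$-action, i.e.\ $\MM_v(\EE)=\MM(\EE)$ (with the torsor statement for $\MM_v(\FF)$ following from Theorem 7.1 and \cite{LKW1}). Since $\MM_v(\EE)$ is already known to be a subgroup, the entire content is to exhibit a concrete $V$ whose class has order $n$ (resp.\ $2m$). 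Your sketch never produces a vertex operator algebra at all: the congruence property, Coste--Gannon Galois symmetry, and $p$-adic valuations of $S_{00}$ are constraints satisfied by \emph{every} modular category, so no amount of such analysis can show that a given abstract minimal modular extension $\ZZ(\Vec_G^\omega)$ is braided equivalent to some $\CC_{V^G}$. What your outline would establish, if completed, is a version of the Cauchy theorem for modular categories (the primes dividing $\FSexp$ coincide with those dividing the global dimension) --- a genuinely different theorem, which the paper neither proves nor needs.

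The paper's proof is constructive. For $G\cong\Z_n$ it takes the Niemeier lattice $L$ of type $A_1^{24}$, the holomorphic lattice vertex operator algebra $V_L$, and the automorphism $\sigma=\sigma_{\alpha_1/n}$; then $V_L^G=V_E$ for an explicit sublattice $E$, and the irreducible $V_E$-module $V_{E-\alpha_1/n}$ has twist $e^{2\pi i/n^2}$, so $\FSexp(\CC_{V_L^G})$ is divisible by $n^2$. The only categorical input is the Ng--Schauenburg formula \cite{NS} expressing $\FSexp(\ZZ(\Vec_G^\omega))$ as the least common multiple of $\ord(\omega|_H)\cdot o(H)$ over maximal cyclic subgroups $H$, which forces $\ord(\omega)=n$, so $[\CC_{V_L^G}]$ has full order and generates $\MM(\EE)\cong\Z_n$. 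The dihedral case is parallel, using $\sigma_m=\sigma_{\alpha_1/m}$ together with a lift $\tau$ of an explicit isometry of $L$, and the computed conformal weights of the $\sigma_m^s\tau$-twisted modules, to show $4m^2$ divides $\FSexp(\CC_{V_L^G})$ and hence $\ord(\omega)=2m$. Note that the order of the $T$-matrix plays the opposite role from the one in your sketch: it is bounded below by explicit conformal weights of twisted modules and then matched against the group-cohomological formula, rather than analyzed abstractly via ramification. The missing idea you would have to supply is exactly this construction of a holomorphic $V$ with $G$-action together with a lower bound on the twists in $\CC_{V^G}$; the Galois-theoretic machinery cannot substitute for it.
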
}
\def \p{\phi}
\def \d{\delta}
\def \Vec{{\rm Vec}}
\def \Irr{{\rm Irr}}
\def \irr{{\rm Irr}}\def \glob{{\rm glob}}
\def \obj{{\rm obj}}
\def \Id{{\rm Id}}
\def\OO{\mathcal{O}}
\def\EE{{\mathcal{E}}}
\def\Vl0{{(V^l)_{\bar 0}}}
\def\Vm0{{(V^m)_{\bar 0}}}
\def\V0{{V_{\bar 0}}}
\def\VlZ{{V(l,\BZ)}}
\def\VlZpm{{V_{\pm}(l,\BZ) }}
\def\VlhZ{{V(l,\BZ+\frac{1}{2})}}
\newtheorem{th1}{Theorem}
\newtheorem{ree}[th1]{Remark}
\newtheorem{thm}{Theorem}[section]
\newtheorem{prop}[thm]{Proposition}
\newtheorem{coro}[thm]{Corollary}
\newtheorem{lem}[thm]{Lemma}
\newtheorem{rem}[thm]{Remark}
\newtheorem{de}[thm]{Definition}
\newtheorem{hy}[thm]{Hypothesis}
\newtheorem{conj}[thm]{Conjecture}
\newtheorem{ex}[thm]{Example}
\newtheorem{q}[thm]{Question}

\newcommand\red[1]{{\color{red} #1}}
\newcommand\blue[1]{{\color{blue} #1}}
\def\b1{\mathbf{1}}
\def\BZ{\mathbb{Z}}
\def\ol{\overline}
\def\id{\operatorname{id}}
\def\sV{\operatorname{sVec}}
\def\s{\sigma}
\def\ft{\frak t}
\def\i{\iota}
\def\uc{\underline{c}}
\def\CC{\mathcal{C}}
\def\FF{\mathcal{F}}
\def\SC{\mathcal{SC}}
\def\uSC{\underline{\SC}}
\def\DD{\mathcal{D}}
\def\BB{\mathcal{B}}
\def\ZZ{\mathcal{Z}}
\def\EE{\mathcal{E}}
\def\CV0{\mathcal{C}_{V_{\bar 0}}}
\def\CVG{\mathcal{C}_{V^G}}
\def\Ind{{\rm Ind}}
\def\Rep{{\rm Rep}}
\def\FPdim{{\rm FPdim}}
\def\MM{\mathcal{M}}
\def\VMM{\mathcal{VM}}
\def\NN{\mathcal{N}}
\def\HH{\mathcal{H}}
\def\h{\frak h}
\def\ha{\frac{1}{2}}
\def\bo{\boxtimes}
\def\u1{\underline{1}}
\newcommand{\Free}[1]{M(1)^{#1}}
\newcommand{\Fremo}[1]{{M(1,#1)}}
\newcommand{\charge}[1]{V_{L}^{#1}}
\newcommand{\charhalf}[2]{V_{#1+L}^{#2}}
\newcommand{\charlam}[1]{V_{#1+L}}
\newcommand\R[2]{{\bf R}_{#1,#2}}
\newcommand{\Fretw}[1]{M(1)(\tau)^{#1}}
\begin{center}
{\Large {\bf  Orbifolds and minimal modular extensions\footnote{This project is partially supported by NSFC grant 11871351, 12071314}}}\\
\vspace{0.5cm}

Chongying Dong
\\
 Department of Mathematics, University of
California, Santa Cruz, CA 95064 USA \\

Siu-Hung Ng\footnote{Partially supported by NSF grant
DMS1664418}\\
Department of Mathematics
Louisiana State University
Baton Rouge, LA 70803

Li Ren\\
 School of Mathematics,  Sichuan University,
Chengdu 610064 China
\end{center}

\begin{abstract} 
Let $V$ be a simple, rational, $C_2$-cofinite vertex operator algebra and $G$ a finite group acting faithfully on $V$ as automorphisms, which is simply called a rational vertex operator algebra with a $G$-action. It is shown that  the category $\EE_{V^G}$ generated by the $V^G$-submodules of $V$ is a symmetric fusion category braided equivalent to the $G$-module category $\EE=\Rep(G)$. If
$V$ is holomorphic, then the $V^G$-module category $\CC_{V^G}$ is a minimal modular extension of $\EE,$ and is equivalent to 
the Drinfeld center  $\ZZ(\Vec_G^{\alpha})$ as modular tensor categories  for some $\alpha\in H^3(G,S^1)$ with a canonical embedding of $\EE$.
Moreover, the collection $\MM_v(\EE)$ of equivalence classes of the minimal modular extensions $\CC_{V^G}$ of $\EE$ for  holomorphic vertex operator algebras $V$ with a $G$-action forms a group, which is isomorphic to a subgroup of $H^3(G,S^1).$ Furthermore, any  pointed modular category $\ZZ(\Vec_G^{\alpha})$ is equivalent to $\CC_{V_L^G}$
for some positive definite even unimodular lattice $L.$
In general, for any rational vertex operator algebra $U$ with a $G$-action,  $\CC_{U^G}$ is a minimal modular extension of the braided fusion subcategory $\FF$ generated by the $U^G$-submodules of $U$-modules. Furthermore, the group $\MM_v(\EE)$ acts freely on the set of equivalence classes $\MM_v(\FF)$ of the minimal modular extensions  $\CC_{W^G}$ of $\FF$ for any rational vertex operators algebra $W$ with a $G$-action. 
\end{abstract}

\section{Introduction}

This paper is a continuation of our study of $V^G$-module category $\CC_{V^G}$ for a regular (rational and $C_2$-cofinite) vertex operator algebra $V$ with a finite automorphism group isomorphic to $G$ of $V$ (cf. \cite{DNR}). It is established that if $V^G$ is regular, the category $\EE_{V^G}$ generated by the $V^G$-submodules of $V$ is a symmetric fusion category braided equivalent to the $G$-module category $\EE=\Rep(G).$ If $V$ is holomorphic, then the $V^G$-module category $\CC_{V^G}$ is a minimal modular extension of $\EE,$ and is equivalent to the Drinfeld center  $\ZZ(\Vec_G^{\alpha})$ or to the module category of twisted Drinfeld double $D^{\alpha}(G)$ for some $\alpha\in H^3(G,S^1)$ with a canonical embedding of $\EE$. This result has been conjectured  in \cite{DPR} where the $D^{\alpha}(G)$ was introduced and studied. Moreover, the collection $\MM_v(\EE)$ of equivalence classes of the minimal modular extensions $\CC_{V^G}$ of $\EE$ for some holomorphic vertex operator algebra $V$ with a $G$-action form a group, which is isomorphic to a subgroup of $H^3(G,S^1).$ Furthermore, any  pointed modular category $\ZZ(\Vec_G^{\alpha})$ is equivalent to $\CC_{V_L^G}$
for some positive definite even unimodular lattice $L.$ For any rational vertex operator algebra $U$ with a $G$-action,  $\CC_{U^G}$ is a minimal modular extension of the braided fusion subcategory $\FF$ generated by the $U^G$-submodules of $U$-modules. Furthermore, the group $\MM_v(\EE)$ acts freely on the set of equivalence classes $\MM_v(\FF)$ of the minimal modular extensions  $\CC_{W^G}$ of $\FF$ for any rational vertex operators algebra $W$ with a $G$-action.

It was proved in \cite{CM} that if $G$ is solvable, then the regularity of $V$ implies the regularity of $V^G.$ So we only need to assume $V$ is regular in this case. 
  More recently, the regularity of $V$ together with the $C_2$-cofiniteness of $V^G$ implies the rationality of $V^G$ \cite{Mc}.

We now give a detail discussion on this paper. 
 A braided fusion category $\CC$ over $\EE=\Rep(G)$, simply called a braided $\EE$-category, is a pair $(\CC, \eta)$ where $\CC$ is a braided fusion category and $\eta: \EE \to \CC$ is a full and faithful braided tensor functor. Throughout this paper, we call any full and faithful braided tensor functor  an  \emph{embedding}.  A braided $\EE$-category $(\CC, \eta)$ is said to be  \emph{nondegenerate} if $\eta: \EE \to \CC'$ is an equivalence, where $\CC'$ denotes the  M\"uger center of $\CC$. Note that $\EE$ is a nondegenerate braided $\EE$-category. We may simply write $\CC$ for the braided $\EE$-category $(\CC, \eta)$ when there is no ambiguity.  
 
An equivalence of  braided $\EE$-categories $(\CC_1, \eta_1)$ and $(\CC_2, \eta_2)$ is a  braided tensor equivalence $F: \CC_1 \to \CC_2$ such that  $\eta_2 \cong F \circ \eta_1$ as braided tensor functors.  
 
 A modular extension of a braided $\EE$-category $\CC$ is a pair $(\DD, j_\DD)$ in which $\DD$ is a modular tensor category and $j_\DD: \CC \to \DD$ is an embedding. Similar to the equivalence of two braided $\EE$-categories, two modular extensions $(\DD_1, j_1), (\DD_2, j_2)$ of $\CC$ said to be \emph{equivalent} if there exists a braided tensor equivalence $F: \DD_1 \to \DD_2$ such that $j_2\cong F \circ j_1$ as  braided tensor functors. We may simply write $\DD$ for the modular extension $(\DD, j_\DD)$ of $\CC$ if there is no ambiguity, and $[\DD]$ for the equivalence class of $(\DD, j_\DD)$. 
 
 A modular extension $\DD$ of a nondegenerate braided $\EE$-category $\CC$ is called \emph{minimal} if $\FPdim(\DD)=o(G)\cdot\FPdim(\CC).$
 According to \cite{BNRW} there are only finitely many inequivalent  minimal modular extensions of $\CC$ if there is one.  Moreover, by \cite{LKW1}, the collection $\MM(\EE)$ of equivalent classes $[\CC]$ of minimal modular extensions   of $\EE$ forms a finite group isomorphic to $H^3(G,S^1)$  under 
 the relative Deligne tensor product $\CC\otimes_\EE\DD$ for $[\CC],[\DD] \in \MM(\EE)$. 
 In fact, any minimal modular extension of $\EE$ is braided equivalent to the Drinfeld center $\ZZ(\Vec_G^{\alpha})$ where $\Vec_G^{\alpha}$ is the fusion category of $G$-graded vector spaces over $\C$ whose associativity isomorphism is given by the 3-cocycle $\alpha.$  Note that $\ZZ(\Vec_G^{\alpha})$ is  braided equivalent to the module category of the twisted Drinfeld double
 $D^{\alpha}(G).$ 
 
 For any pseudounitary nondegenerate braided $\EE$-category $\FF$, if $\FF$ has a minimal modular extension, then the collection $\MM(\FF)$ of equivalence classes of  minimal modular extensions of $\FF$ admits a natural action of $\MM(\EE)$ via the relative Deligne tensor product. Moreover, $\MM(\FF)$ is a $\MM(\EE)$-torsor \cite{LKW1}. 
 
Our investigation of  the $V^G$-module category $\CC_{V^G}$ in terms of the minimal modular extensions of certain braided fusion category is influenced greatly by the work of \cite{LKW1}. Note from  \cite{Hu} that $\CC_{V^G}$ is a modular tensor category. 
Associated to a rational vertex operator algebra $V$ with a $G$-action are two more braided fusion subcategories $\EE_{V^G}$ and  $\FF_{V^G}$ of $\CVG$. Here, $\EE_{V^G}$ is the full subcategory of $\CC_{V^G}$ generated by the $V^G$-submodules of $V$, and  $\FF_{V^G}$ is the full subcategory of $\CC_{V^G}$ generated by $V^G$-submodules of $V$-modules. $\FF_{V^G}$ is a nondegenerate braided $\EE$-category which satisfies
$$
C_{\CC_{V^G}}(\EE_{V^G}) = \FF_{V^G} \quad \text{and}\quad C_{\CC_{V^G}}(\FF_{V^G}) = \EE_{V^G}\,,
$$
where $C_\CC(\BB)$ denotes the M\"uger centralizer of the subcategory $\BB$ in the braided fusion category $\CC$. These two categories are the same if and only if $V$ is holomorphic. The main idea is to put these categories in the context of the minimal modular extensions. Recall from \cite{DLM1} a Schur-Weyl type duality decomposition
$$V=\oplus_{\lambda\in \irr(G)}W_{\lambda}\otimes V_{\lambda}$$
 where $W_{\lambda}$ is the irreducible $G$-module with character $\lambda$ and the multiplicity spaces $V_{\lambda}$ are inequivalent irreducible $V^G$-modules. Then $\EE_{V^G}$ is generated by $V_{\lambda}$ for $\lambda\in \Irr(G).$ Our first 
 result asserts that $\EE_{V^G}$ is a symmetric fusion category braided equivalent to $\EE$ for any rational vertex operator algebra $V$ via an embedding  $F^{V,G}: \EE \to \CC_{V^G}$
 (also see \cite{Ki}). In particular, $(\CC_{V^G}, F^{V,G})$ is a braided $\EE$-category.
 
 In the case when $V$ is holomorphic, $(\CC_{V^G}, F^{V,G})$ is a minimal modular extension of $\EE.$ So $\CC_{V^G}$ is braided 
 equivalent to $\ZZ(\Vec_G^{\alpha})$  for some $\alpha\in H^3(G,S^1).$ Let ${\bf H}_G$ be the collection  holomorphic vertex operator algebras with a  $G$-action. Then $\MM_v(\EE)$ consisting of equivalence classes of $(\CC_{V^G}, F^{V,G})$ for some $V\in {\bf H}_G$ is a subgroup of $\MM(\EE).$ We certainly believe that $\MM_v(\EE)=\MM(\EE)$. If $G$ is an abelian group generated by less than 3 elements or an odd dihedral group,
 we show that  $\MM_v(\EE)=\MM(\EE).$
 We also show that the group operation given in \cite{LKW1} can be realized from the tensor product of vertex operator algebras, i.e.
 $\CC_{V^G}\ot^{F^{V,G}, F^{U,G}}_{\EE} \CC_{U^G} \cong (\CC_{(V\otimes U)^G}, F^{V \ot U, G})$ for $V, U\in {\bf H}_G,$ where the $G$-action on $V \ot W$ is the diagonal action of $G \times G$.

 Now we assume that $\FF$ is an  arbitrary pseudounitary nondegenerate braided $\EE$-category. Let ${\bf R}_{G}^{\FF}$ be the collection of rational vertex operator algebras $W$ with a  $G$-action  such that $\FF_{W^G}$ is  equivalent to $\FF$ as braided  $\EE$-categories. If ${\bf R}_G^\FF$ is not empty, we establish that $\MM_v(\EE)$  acts freely on the set of equivalence classes $\MM_v(\FF)=\{[\CC_{W^G}]|W\in {\bf R}_G^{\FF} \text{ and }\}$
  such that  $\CC_{V^G}\ot^{F^{V,G}, F^{W,G}}_{\EE} \CC_{W^G} \cong (\CC_{(V\otimes W)^G}, F^{V \ot W, G})$ for $V\in {\bf H}_G,$ $W\in {\bf R}_{G}^{\FF}$ where the $G$-action of $V \ot W$ is the diagonal action of $G \times G$. Again, it is desirable that $\MM_v(\FF)=\MM(\FF)$  and $\MM_v(\FF)$ is a $\MM_v(\EE)$-torsor  whenever $\MM_v(\FF) \ne \emptyset$.
 
For any braided fusion category $\CC$ with braiding isomorphism $c_{X,Y}:X\otimes Y\to Y\otimes X$, $\overline{c}_{X,Y}=c_{Y,X}^{-1}$ also defines a braiding on $\CC$. We denote by $\ol{\CC}$ the braided fusion category $\CC$ equipped with the braiding $\ol{c}$. Note that  $\ol{\EE} = \EE$ as braided tensor categories. Thus, if $(\CC, j)$ is a braided  $\EE$-category, then so is $(\ol \CC, j)$. Again, we will simply write $\ol\CC$ for the braided $\EE$-category $(\ol\CC, j)$.  The  braided $\EE$-category $\overline{\CC}$ plays an essential role in the group structure of $\MM(\EE)$
and the $\MM(\EE)$-torsor structure on $\MM(\FF).$ In fact, if $\MM\in\MM(\EE)$, then its inverse is exactly 
 $\overline{\MM}.$ The proof of free and transitive action of  $\MM(\EE)$ on  $\MM(\FF)$ in \cite{LKW2} also uses $\overline{\NN}$ for 
$\NN\in \MM(\FF).$  So it is necessary and important to understand $\overline{\CC_V}$ for a rational vertex operator algebra $V.$ For such $V$ we now have two modular tensor categories $\CC_V$ and $\overline{\CC_V}.$ In the setting of braiding isomorphism, one needs to define $(-1)^n$ for rational number $n.$ The braidings $c_{X,Y}$ and $c_{Y,X}^{-1}$ correspond to the choices $(-1)^n=e^{\pi in}$ and $(-1)^n=e^{-\pi in}.$ 

From the point of view of vertex operator algebra, we conjecture that the modular tensor category $\overline{\CC_V}$ is braided equivalent to $\CC_U$ for some rational vertex operator algebra $U.$ This is consistent with the reconstruction program. That is,  any modular tensor category $\CC$ can be realized as $\CC_{W}$ for some rational vertex operator algebra. Using the language of vertex operator algebra, the conjecture is equivalent to the statement: If $V$ is a rational vertex operator algebra then there exists a holomorphic vertex operator algebra $H$ containing $V$ as sub-VOA  such that the double commutant $C_H(C_H(V))=V.$ Then $\overline{\CC_V}$ and
$\CC_{C_H(V)}$ are braided equivalent. We prove this conjecture for lattice vertex operator algebra $V_L,$ affine vertex operator algebras associated to the integrable highest weight representations and the Virasoro vertex operator algebras associated to the discrete series. 

This paper is organized as follows: We review the twisted modules and $g$-rationality of  vertex operator algebras following \cite{DLM3} in Section 2. Section 3 is a review of basics on the fusion categories, braided fusion categories, modular tensor categories and minimal modular extensions of a fusion category over $\EE$ \cite{ENO,EGNO,KO}.  We also present the main results concerning $\MM(\EE)$ and its torsor $\MM(\CC)$ from \cite{LKW1}. Section 4 is a review of the modular tensor category 
$\CC_V$ associated to a rational, $C_2$-cofinite vertex operator algebra $V$ \cite{HL1,HL2,HL3, Hu}. We 
discuss how to realize $\overline{\CC_V}$  as $\CC_U$ with some conjecture and examples in the last half of this section. We also
gives a necessary and sufficient condition for $\overline{\CC_V}$ and $\CC_U$ being braided equivalent. In Section 5, we recall from \cite{DRX, DLXY} the classification of irreducible $V^G$-modules and related results. In Section 6 we  prove that for any
rational, $C_2$-cofinite vertex operator algebra $V,$ $\EE$ and $\EE_{V^G}$ are braided equivalent, and 
the regular commutative algebra $\C[G]^*$ in $\EE$ corresponds to commutative algebra $V$ in $\EE_{V^G}$ for any rational vertex operator algebra $V$ under the braided equivalence. Furthermore, $\CC_{V^G}$ is a minimal modular extension of $\FF_{V^G}.$ In particular, if $V$ is holomorphic, then $\CC_{V^G}$ is a minimal modular extension of $\EE.$ Section 7 is devoted to the proof that $\MM_v(\EE)$ is a finite abelian group under the product $\CC_{V^G}\cdot \CC_{U^G}=\CC_{(V\otimes U)^G}$ and $\MM_v(\EE)$ acts on $\MM_v(\FF)$ by $\CC_{V^G}\cdot \CC_{W^G}=\CC_{(V\otimes W)^G}.$ We prove in Section 8 that if  $\ZZ(\Vec_G^{\alpha})$ is pointed, then $\ZZ(\Vec_G^{\alpha})$ is equivalent to $\CC_{V_L^G}$
for some positive definite even unimodular lattice $L.$

\section{Twisted modules}
Let $V$ be a vertex operator algebra and $g$ an automorphism of $V$ of finite order $T$. Then  $V$ is a direct sum of eigenspaces of $g:$
$V=\bigoplus_{r\in \Z/T\Z}V^r$
where $V^r=\{v\in V|gv=e^{-2\pi ir/T}v\}$.
We use $r$ to denote both
an integer between $0$ and $T-1$ and its residue class \m $T$ in this
situation.

A {\em weak $g$-twisted $V$-module} $M$ is a vector space equipped
with a linear map
\begin{equation*}
\begin{split}
Y_M: V&\to (\End\,M)[[z^{1/T},z^{-1/T}]]\\
v&\mapsto\displaystyle{ Y_M(v,z)=\sum_{n\in\frac{1}{T}\Z}v_nz^{-n-1}\ \ \ (v_n\in
\End\,M)},
\end{split}
\end{equation*}
which satisfies the following:  for all $0\leq r\leq T-1,$ $u\in V^r$, $v\in V,$
$w\in M$,
\begin{eqnarray*}
& &Y_M(u,z)=\sum_{n\in \frac{r}{T}+\Z}u_nz^{-n-1} \label{1/2},\\
& &u_lw=0~~~
\mbox{for}~~~ l\gg 0,\label{vlw0}\\
& &Y_M({\mathbf 1},z)=\Id_M,\label{vacuum}
\end{eqnarray*}
 \begin{equation*}\label{jacobi}
\begin{array}{c}
\displaystyle{z^{-1}_0\delta\left(\frac{z_1-z_2}{z_0}\right)
Y_M(u,z_1)Y_M(v,z_2)-z^{-1}_0\delta\left(\frac{z_2-z_1}{-z_0}\right)
Y_M(v,z_2)Y_M(u,z_1)}\\
\displaystyle{=z_2^{-1}\left(\frac{z_1-z_0}{z_2}\right)^{-r/T}
\delta\left(\frac{z_1-z_0}{z_2}\right)
Y_M(Y(u,z_0)v,z_2)},
\end{array}
\end{equation*}
where $\delta(z)=\sum_{n\in\Z}z^n$ and
all binomial expressions (here and below) are to be expanded in nonnegative
integral powers of the second variable.

A $g$-{\em twisted $V$-module} is
a $\C$-graded weak $g$-twisted $V$-module $M:$
\begin{equation*}
M=\bigoplus_{\lambda \in{\C}}M_{\lambda}
\end{equation*}
where $M_{\l}=\{w\in M|L(0)w=\l w\}$ and $L(0)$ is the component operator of $Y(\omega,z)=\sum_{n\in \Z}L(n)z^{-n-2}.$ We also require that
$\dim M_{\l}$ is finite and for fixed $\l,$ $M_{\frac{n}{T}+\l}=0$
for all small enough integers $n.$ If $w\in M_{\l}$, we refer to $\l$ as the {\em weight} of
$w$ and write $\l=\wt w.$

We use $\Z_+$ to denote the set of nonnegative integers.
 An {\em admissible} $g$-twisted $V$-module
is a  $\frac1T{\Z}_{+}$-graded weak $g$-twisted $V$-module $M:$
\begin{equation*}
M=\bigoplus_{n\in\frac{1}{T}\Z_+}M(n)
\end{equation*}
satisfying
\begin{equation*}
v_mM(n)\subseteq M(n+\wt v-m-1)
\end{equation*}
for homogeneous $v\in V,$ $m,n\in \frac{1}{T}{\Z}.$

If $g=\Id_V$,  we have the notions of  weak, ordinary and admissible $V$-modules \cite{DLM3}.

If $M=\bigoplus_{n\in \frac{1}{T}\Z_+}M(n)$
is an admissible $g$-twisted $V$-module, the contragredient module $M'$
is defined as follows:
\begin{equation*}
M'=\bigoplus_{n\in \frac{1}{T}\Z_+}M(n)^{*},
\end{equation*}
where $M(n)^*=\Hom_{\C}(M(n),\C).$ The vertex operator
$Y_{M'}(a,z)$ is defined for $a\in V$ via
\begin{eqnarray*}
\langle Y_{M'}(a,z)f,w\rangle= \langle f,Y_M(e^{z L(1)}(-z^{-2})^{L(0)}a,z^{-1})w\rangle,
\end{eqnarray*}
where $\langle f,w\rangle=f(w)$ is the natural paring $M'\times M\to \C.$
It follows from \cite{FHL} and \cite{X} that $(M',Y_{M'})$ is an admissible $g^{-1}$-twisted $V$-module. The  $g^{-1}$-twisted $V$-module $M'=(M',Y_{M'})$ is called the contragredient module of the $g$-twisted $V$-module $M.$ Moreover, $M$ is irreducible if and only if $M'$ is irreducible.

A \voa $V$ is called $g$-rational, if the  admissible $g$-twisted module category is semisimple. $V$ is called rational if $V$ is $1$-rational.
A \voa $V$ is $C_2$-cofinite if $V/C_2(V)$ is finite dimensional, where $C_2(V)=\langle v_{-2}u|v,u\in V\rangle$ \cite{Z}. A \voa $V$ is called regular if every weak $V$-module is a direct sum of irreducible $V$-modules \cite{DLM2}. It is proved  in \cite{ABD} that if  $V$ is of CFT type, then regularity is equivalent to rationality and $C_2$-cofiniteness. Also $V$ is regular if and only if the weak module category is semisimple \cite{DYu}.

The following results about $g$-rational \voas \  are well-known \cite{DLM3}, \cite{DLM4}.
\begin{thm}\label{grational}
If $V$ is $g$-rational, then:
\begin{enumerate}
    \item[\rm (1)] Any irreducible admissible $g$-twisted $V$-module $M$ is a $g$-twisted $V$-module. Moreover, there exists a number $\l \in \mathbb{C}$ such that  $M=\oplus_{n\in \frac{1}{T}\mathbb{Z_+}}M_{\l +n}$ where $M_{\lambda}\neq 0.$ The $\l$ is called the conformal weight of $M;$

 \item[\rm (2)] There are only finitely many irreducible admissible  $g$-twisted $V$-modules up to isomorphism.

 \item[\rm (3)] If $V$ is also $C_2$-cofinite and $g^i$-rational for all $i\geq 0$ then the central charge $c$ and the conformal weight $\l$ of any irreducible $g$-twisted $V$-module $M$ are rational numbers.
 \end{enumerate}
\end{thm}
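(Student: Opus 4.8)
The plan is to reduce statements (1) and (2) to the representation theory of the $g$-twisted Zhu algebra $A_g(V)$, and to derive the arithmetic statement (3) from the modular invariance of twisted trace functions. For (1) and (2) the basic tool is the standard bijection between isomorphism classes of irreducible admissible $g$-twisted $V$-modules and those of irreducible $A_g(V)$-modules, sending an admissible module $M=\bigoplus_{n\in\frac1T\Z_+}M(n)$ to its top level $M(0)$, on which $A_g(V)$ acts through the zero modes $o(v)=v_{\wt v-1}$. By hypothesis $V$ is $g$-rational, i.e. the admissible $g$-twisted module category is semisimple; equivalently $A_g(V)$ is a finite-dimensional semisimple algebra. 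Statement (2) is then immediate, since a finite-dimensional semisimple algebra has only finitely many irreducible modules up to isomorphism.

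For (1) I would first note that $L(0)=\omega_1$ preserves each homogeneous component $M(n)$, because $\wt\omega-1-1=0$, and that $L(0)$ commutes with every zero mode, since $[L(0),v_n]=(\wt v-n-1)v_n$ vanishes at $n=\wt v-1$. On the top level $M(0)$, which is finite-dimensional because $A_g(V)$ is, the operator $L(0)$ therefore commutes with the irreducible $A_g(V)$-action, so Schur's lemma forces it to act as a scalar $\l$. Propagating this through the grading via $v_mM(n)\subseteq M(n+\wt v-m-1)$ shows that $L(0)$ acts as $\l+n$ on $M(n)$, so the admissible grading coincides with the $L(0)$-eigenspace decomposition; combined with the standard spanning argument that each $M(n)$, generated from the finite-dimensional top level, is finite-dimensional, this exhibits $M=\bigoplus_{n\in\frac1T\Z_+}M_{\l+n}$ as a genuine $g$-twisted module of conformal weight $\l$.

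The real content is (3). Under the extra hypotheses that $V$ is $C_2$-cofinite and $g^i$-rational for all $i$, the $g$-twisted form of Zhu's modular invariance theorem (\cite{DLM4}) shows that the finite-dimensional span of the characters $\tr_M q^{L(0)-c/24}$, taken over all irreducible $g^i$-twisted modules for all $i$, is invariant under $SL(2,\Z)$, producing a finite-dimensional representation $\rho$. The generator $T=\left(\begin{smallmatrix}1&1\\0&1\end{smallmatrix}\right)$ multiplies the character of an irreducible module of conformal weight $\l$ by $e^{2\pi i(\l-c/24)}$, up to the root-of-unity phases coming from the $\frac1T\Z$-grading, so the eigenvalues of $\rho(T)$ are of this shape. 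The key step is to prove that $\rho(T)$ has finite order; this follows from the finiteness of the image of $\rho$, i.e. the congruence property $\ker\rho\supseteq\Gamma(N)$ for some $N$. Granting it, every exponent $\l-c/24$ is rational. Specializing to the untwisted module $M=V$, whose conformal weight is $0$, yields $c/24\in\Q$, hence $c\in\Q$, and feeding this back into $\l-c/24\in\Q$ gives $\l\in\Q$ for every irreducible $g$-twisted module. The main obstacle is precisely this last arithmetic input: parts (1) and (2) are formal consequences of the finite-dimensional semisimplicity of $A_g(V)$, whereas (3) relies on the full strength of modular invariance and the congruence subgroup property of the modular representation attached to $V$.
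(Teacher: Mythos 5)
You should first be aware that the paper contains no proof of this theorem at all: it is quoted as well-known, with parts (1)--(2) belonging to \cite{DLM3} and part (3) to \cite{DLM4}, so your attempt has to be measured against those references. In broad strategy you do follow them --- the twisted Zhu algebra $A_g(V)$ for the structural statements and modular invariance of twisted trace functions for the arithmetic one --- but two of your steps have genuine gaps. The first concerns finite-dimensionality of the graded pieces in (1). Your Schur--Dixmier argument correctly shows that $L(0)$ acts as the scalar $\lambda+n$ on $M(n)$ (after shifting the grading so that $M(0)\neq 0$), but being a $g$-twisted module also requires $\dim M_{\lambda+n}<\infty$, and this does \emph{not} follow from the ``standard spanning argument'' you invoke. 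The degree-$n$ subspace is spanned by vectors $v_{\wt v-1-n}w$ (and longer monomials) with $w\in M(0)$ and $v$ running over homogeneous vectors of arbitrarily large weight, so finite-dimensionality of the top level bounds nothing: $V$ is infinite-dimensional and no $C_2$-type finiteness is assumed in (1). In \cite{DLM3}, and in its cleanest later form via the level-$n$ algebras $A_{g,n}(V)$ of \cite{MT}, this is a real theorem: $g$-rationality forces each $A_{g,n}(V)$ to be semisimple Artinian, hence finite-dimensional over $\C$, and $M(n)$ is an irreducible or zero module over it. Relatedly, your opening claim that $g$-rationality is ``equivalent'' to $A_g(V)$ being finite-dimensional semisimple overstates what is known: the forward implication is itself the main content of the cited theorem (the converse is not known in general), so presenting it as a reformulation of the hypothesis conceals precisely the work that (1) and (2) require.

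The second gap is a circularity in (3). You reduce rationality of $\lambda$ and $c$ to finiteness of the order of $\rho(T)$, and you obtain that from the congruence property of the orbifold modular representation. But in the twisted setting the congruence property is the much later theorem of \cite{DR} (resting on \cite{DLN} and \cite{NS}), and its proof takes as input the modular-invariance package of \cite{DLM4} \emph{including the rationality statement you are trying to prove}: one needs to know that the twisted trace functions are $q$-series with rational exponents of bounded denominator before the level of a congruence subgroup is even meaningful. The non-circular argument --- the one actually in \cite{DLM4} --- feeds modular invariance into the Anderson--Moore rationality mechanism: the span of the functions $\tr_M\phi(g^j)q^{L(0)-c/24}$, over all irreducible $g^i$-twisted modules $M$ and all $i,j$, is a finite-dimensional $SL(2,\Z)$-invariant space whose members have $q$-expansions with controlled (integral, or cyclotomic-integral) coefficients, and a Galois-theoretic analysis of the resulting modular equation forces the exponents $\lambda-c/24$ to be rational. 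For untwisted weights and $c$ one could instead quote Huang's modular tensor category together with Vafa's theorem, but for twisted weights under the stated hypotheses nothing is assumed about $V^{\langle g\rangle}$ (its rationality would require results like \cite{M}, \cite{CM}, \cite{Mc} that go beyond the hypotheses of (3)), so that shortcut is unavailable and the \cite{DLM4} route is the one that works.
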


A \voa $V=\oplus_{n\in \Z}V_n$  is said to be of CFT type if $V_n=0$ for negative $n$ and $V_0=\C {\bf 1}.$

\section{Fusion categories}

In this section we will review  fusion categories and modular tensor categories following \cite{ENO}, \cite{EGNO}, \cite{KO}. A  fusion category ${\cal C}$ is a $\C$-linear abelian semisimple, rigid monoidal category with finitely many inequivalent simple objects, and finite dimensional morphism spaces, together with a tensor product functor $\boxtimes: {\cal C}\times {\cal C}\to {\cal C},$ a unit object $\1_\CC$ satisfying certain axioms. We use $X'$ to denote the (left) dual object of $X\in\CC$, and 
$\Or(\Cc)$ denotes the set of equivalence classes of the simple objects.   Throughout this paper, subcategories  of $\Cc$ are always assumed to be full.  A fusion subcategory of $\CC$ is defined as expected.

A very useful concept is  so called the \emph{Frobenius-Perron dimension}. Let $K_0(\CC)$ be the Grothendieck ring of a fusion category $\CC.$ Then there is a unique ring homomorphism $\FPdim: K_0(\CC)\to \BR$ satisfying
$\FPdim(M)\geq 1$ for any nonzero object $M.$ The Frobenius-Perron dimension of $\CC$ is defined to be $\FPdim(\CC)=\sum_{M\in \Or(\CC)}\FPdim (M)^2.$ In the case   $\CC$ is a fusion subcategory of the module category for
a vertex operator algebra $V$, the Frobenius-Perron dimension $\FPdim (M)$ is exactly the quantum dimension $\qdim_V(M)$ studied in \cite{DJX} and \cite{DRX}.

 A \emph{braided fusion category} is a fusion category $\CC$  with a natural isomorphism $c_{X,Y} : X \boxtimes Y\to  Y\boxtimes X$, called a \emph{braiding}, which satisfies some compatible conditions. Associated to a braided fusion category $\CC$ is another braided fusion category $\overline{\CC}$ which has the same fusion category as $\CC$ with a new braiding $\overline{c}_{X,Y}=c_{Y,X}^{-1}.$ A braided fusion category $\CC$  is called \emph{symmetric} if  $c_{Y,X}\circ c_{X,Y} =\id_{X\boxtimes Y}$
 or $\CC=\overline{\CC}$ as braided fusion categories. For any collection $\DD$ of objects in $\CC$,  the \emph{M\"uger centralizer} $C_{\CC}(\DD)$ is the  subcategory of $\CC$ consisting of the objects $Y$ in $\CC$ such that $c_{Y,X}\circ  c_{X,Y} = \id_{X\boxtimes Y}$  for all $X$ in $\DD.$ The subcategory $C_\CC(\DD)$ is closed under the tensor product of $\CC$ and hence a braided fusion subcategory of $\CC$. The symmetric fusion category $C_{\CC}(\CC)$ is called the \emph{M\"uger center} of $\CC$, and denoted by $\CC'$. For example, for any finite group $G,$ the finite dimensional $\C [G]$-module
 category ${\rm Rep}(G)$ is a symmetric fusion category with the usual tensor product and braiding of $\C$-linear spaces. A symmetric fusion category $\CC$
is called \emph{Tannakian} if there is a finite group $G$ such that $\CC$ is equivalent to 
${\rm Rep}(G)$  as braided fusion categories.  According to \cite{De},  the braided fusion category $\CC$ is Tannakian if and only if there  exists a faithful braided tensor functor from
$\CC$ to $\Vec$, where $\Vec$ denotes the category of finite dimensional $\C$-linear spaces with the usual tensor product and braiding.

A braided fusion category  $\CC$ is called \emph{nondegenerate} if $\CC' \stackrel{\otimes}{\cong} \Vec$. A nondegenerate spherical braided fusion category is called a modular tensor category. This definition is equivalent to that the corresponding $S$-matrix is nonsingular \cite{Mu2}. From \cite{DGNO}  we know that if $\CC$ is a braided fusion category and $\BB$ is a fusion subcategory then
\begin{equation}\label{3.1}
\FPdim(\BB)\cdot \FPdim (C_\CC(\BB))=\FPdim(\CC)\cdot \FPdim(\CC'\cap\BB).
\end{equation}

The Drinfeld center $\ZZ(\CC)$ of a fusion category $\CC$ is a braided fusion category whose objects are pairs $(X, z_{X,-})$ in which $X\in\CC$ and  $z_{X,-}: X\boxtimes (-)\to (-)\boxtimes X$ a natural isomorphism, called \emph{a half-braiding}, satisfying certain conditions. Moreover, $\FPdim(\ZZ(\CC))=\FPdim(\CC)^2.$ If $\CC$ is  a spherical fusion category then $\ZZ(\CC)$ is a modular tensor category \cite{Mu2}. In particular, $\ZZ(\Rep(G))$ is a modular tensor category.

Let $\CC$ be a braided fusion category. Then $\EE=\CC'$ is a symmetric fusion category.  A  modular extension (ME) of $\CC$ is a pair $(\DD, \i_\DD)$ where $\DD$ is modular tensor category and $\i_\DD: \CC \to \DD$ is a full and faithful braided tensor  (and simply called an  \emph{embedding} in the sequel). The ME $(\DD, j_\DD)$ is called \emph{minimal} (MME) if $C_{\DD}(\EE)=\CC$ under the identification of $\i_\DD(\CC)$ with $\CC$. We will simply write $\DD$ for an ME $(\DD, \i_\DD)$ of $\CC$ and identify $\CC$ with $\i_\DD(\CC)$ when the context is clear.

\begin{lem}\label{l3.1} A modular extension $\DD$ of a braided fusion category $\CC$ over $\EE$ is minimal if and only if  
$$\FPdim(\DD)=\FPdim(\CC)\cdot \FPdim(\EE).$$
\end{lem}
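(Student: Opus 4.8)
The plan is to work entirely inside the modular tensor category $\DD$, identifying $\CC$ with its image $\i_\DD(\CC)$ so that $\EE \subseteq \CC \subseteq \DD$ is a chain of fusion subcategories, where $\EE = \CC'$ is the M\"uger center of $\CC$. The main tool is the dimension formula (\ref{3.1}) applied in the ambient braided fusion category $\DD$ to the subcategory $\BB = \EE$, which reads
\[
\FPdim(\EE)\cdot\FPdim(C_\DD(\EE)) = \FPdim(\DD)\cdot\FPdim(\DD'\cap\EE).
\]
Since $\DD$ is modular it is nondegenerate, so $\DD' \cong \Vec$ and hence $\DD'\cap\EE \cong \Vec$ has Frobenius--Perron dimension $1$. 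Thus the identity collapses to
\[
\FPdim(C_\DD(\EE))\cdot\FPdim(\EE) = \FPdim(\DD).
\]

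Next I would record that the inclusion $\CC \subseteq C_\DD(\EE)$ holds unconditionally: for any $X \in \CC$ and $Y \in \EE = C_\CC(\CC)$ the double braiding $c_{Y,X}\circ c_{X,Y}$ is already the identity in $\CC$, and because $\i_\DD$ is a braided embedding this relation persists in $\DD$, so $X$ centralizes $Y$ there. With this inclusion both implications are immediate from the displayed identity. For the forward direction, minimality gives $C_\DD(\EE) = \CC$, whence $\FPdim(\DD) = \FPdim(\CC)\cdot\FPdim(\EE)$. For the converse, the assumed dimension equality together with the identity forces $\FPdim(C_\DD(\EE)) = \FPdim(\CC)$.

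To finish the converse I would upgrade this numerical equality to the categorical equality $C_\DD(\EE) = \CC$ using the standard monotonicity of $\FPdim$: for an inclusion $\BB_1 \subseteq \BB_2$ of fusion subcategories one has $\FPdim(\BB) = \sum_{M\in\Or(\BB)}\FPdim(M)^2$ with each $\FPdim(M) \geq 1$, so equality of the total dimensions forces $\Or(\BB_1) = \Or(\BB_2)$, and a fusion subcategory is determined by its simple objects; applying this to $\CC \subseteq C_\DD(\EE)$ yields $C_\DD(\EE) = \CC$, i.e.\ $\DD$ is minimal. There is no serious obstacle once (\ref{3.1}) is in hand, since the computation is essentially forced; the only points needing care are the verification that $\CC$ genuinely centralizes $\EE$ inside $\DD$ (which is what makes the inclusion $\CC \subseteq C_\DD(\EE)$ automatic, so that one direction reads off immediately) and the use of $\FPdim$-monotonicity to convert the dimension count back into the categorical minimality condition.
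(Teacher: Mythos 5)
Your proof is correct and follows essentially the same route as the paper: apply the dimension identity (\ref{3.1}) with $\BB=\EE$ inside $\DD$, use nondegeneracy of $\DD$ to kill the term $\FPdim(\DD'\cap\EE)$, and combine with the automatic inclusion $\CC\subseteq C_\DD(\EE)$. The paper states these steps tersely ("it follows immediately from equation (\ref{3.1}) since $\CC$ is always a fusion subcategory of $C_\DD(\EE)$"), while you have simply made explicit the two details it leaves implicit — that a braided embedding transports the triviality of the double braiding, and that equal Frobenius--Perron dimensions upgrade an inclusion of fusion subcategories to equality.
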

\begin{proof} Since the modular tensor category $\DD$ is nondegenerate, we see that $\DD'=\Vec$ and $\FPdim(\DD'\cap \EE)=1.$
It follows immediately from equation (\ref{3.1})  
 since $\CC$ is always a fusion subcategory of $C_{\DD}(\EE).$
\end{proof}

Let $\CC$ be any braided fusion category over $\EE$. Two  modular extensions $(\DD_1,\iota_1)$ and $(\DD_2,\iota_{2})$ of $\CC$ are equivalent if there is a braided equivalence $F: \DD_1\to \DD_2$ such that $F \circ \i_{1} \cong \i_2$ as braided tensor functors. Let $\MM(\CC)$ be the set of equivalence classes of MMEs of $\CC.$ Then  $\MM(\CC)$ is a finite set as every MME of $\CC$ has the same
 Frobenius-Perron dimension $\FPdim(\CC)\cdot \FPdim(\EE)$ and there are only finitely many modular tensor categories up to equivalence for any fixed
 Frobenius-Perron dimension \cite{BNRW}. The following important result was obtained in \cite{LKW1}.
 \begin{thm}\label{LKW1}
 Let $\CC$ be a braided $\EE$-category. Then $\MM(\EE)$ is a finite abelian group and  $\MM(\EE)$ acts on  $\MM(\CC)$  freely and transitively provided $\MM(\CC) \ne \emptyset$.
 In particular, the cardinality of $\MM(\CC)$ equals to the order of  $\MM(\EE)$ if  $\MM(\CC) \ne \emptyset$.
  \end{thm}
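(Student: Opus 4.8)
The plan is to realize both the group law on $\MM(\EE)$ and the action on $\MM(\CC)$ through a single construction: the relative Deligne tensor product over a braided category (the $\otimes_\EE$ of the introduction), which for the symmetric Tannakian $\EE=\Rep(G)$ is computed by condensing the regular connected étale algebra and passing to \emph{local} modules. Concretely, for braided fusion categories $\mathcal{A},\mathcal{B}$ each containing $\EE$, I would set $\mathcal{A}\boxtimes_\EE\mathcal{B}:=(\mathcal{A}\boxtimes\mathcal{B})_{A_\EE}^{\mathrm{loc}}$, where $A_\EE=\bigoplus_{X\in\Or(\EE)}X\boxtimes X'$ is the canonical commutative algebra sitting in $\EE\boxtimes\ol\EE\subseteq\mathcal{A}\boxtimes\mathcal{B}$ (using $\ol\EE=\EE$) and $(-)^{\mathrm{loc}}$ is the category of dyslectic modules. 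Since $A_\EE$ is connected étale with $\FPdim(A_\EE)=\FPdim(\EE)$, the results of \cite{KO} give that $\mathcal{A}\boxtimes_\EE\mathcal{B}$ is again modular whenever $\mathcal{A},\mathcal{B}$ are, with $\FPdim(\mathcal{A}\boxtimes_\EE\mathcal{B})=\FPdim(\mathcal{A})\FPdim(\mathcal{B})/\FPdim(\EE)^2$, and that $\EE$ re-embeds into it.

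Granting this, I would verify the group axioms for $\MM(\EE)$. Closure: for $\mathcal{A},\mathcal{B}\in\MM(\EE)$ one has $\FPdim(\mathcal{A}\boxtimes_\EE\mathcal{B})=\FPdim(\EE)^4/\FPdim(\EE)^2=\FPdim(\EE)^2$, so by Lemma~\ref{l3.1} it suffices to check $C_{\mathcal{A}\boxtimes_\EE\mathcal{B}}(\EE)=\EE$, which follows from tracking the Müger centralizer of $\EE$ through the condensation. The identity is $\ZZ(\EE)=\ZZ(\Rep G)$, of dimension $\FPdim(\EE)^2$; the inverse of $[\mathcal{A}]$ is $[\ol{\mathcal{A}}]$, for which I would establish $\mathcal{A}\boxtimes_\EE\ol{\mathcal{A}}\cong\ZZ(\EE)$ using the canonical equivalence $\ZZ(\mathcal{A})\cong\mathcal{A}\boxtimes\ol{\mathcal{A}}$ for modular $\mathcal{A}$ together with the dimension count. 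Associativity and commutativity descend from those of $\boxtimes$ and the symmetry of $A_\EE$, and finiteness of $\MM(\EE)$ is immediate from \cite{BNRW}, since all elements share the fixed dimension $\FPdim(\EE)^2$.

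For the action, given $\DD\in\MM(\CC)$ and $\MM\in\MM(\EE)$ I would set $\MM\cdot\DD:=\DD\boxtimes_\EE\MM$; a dimension count gives $\FPdim(\DD\boxtimes_\EE\MM)=\FPdim(\CC)\FPdim(\EE)$, matching Lemma~\ref{l3.1}, and the centralizer bookkeeping shows $\CC$ re-embeds with $C_{\DD\boxtimes_\EE\MM}(\EE)=\CC$, so $\DD\boxtimes_\EE\MM\in\MM(\CC)$. Transitivity is the heart of the argument: given $\DD_1,\DD_2\in\MM(\CC)$, I would produce the connecting element as $\MM:=\DD_2\boxtimes_\CC\ol{\DD_1}$, now condensing the \emph{larger} regular algebra $A_\CC$ of $\CC$ (of dimension $\FPdim(\CC)$); then $\FPdim(\MM)=\FPdim(\CC)^2\FPdim(\EE)^2/\FPdim(\CC)^2=\FPdim(\EE)^2$. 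Using $\CC'=\EE$ (so that $C_{\DD_i}(\CC)=\EE$ by the double-centralizer consequence of \eqref{3.1}), I would show $\MM$ is a minimal modular extension of $\EE$ and that $\MM\boxtimes_\EE\DD_1\cong\DD_2$. Freeness then follows by recovering $\MM$ from the action via the mixed-associativity identity $(\MM\boxtimes_\EE\DD_1)\boxtimes_\CC\ol{\DD_1}\cong\MM$.

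The main obstacle I anticipate is precisely this centralizer bookkeeping through the two nested condensations. The delicate points are: showing that condensing $A_\EE$ (respectively $A_\CC$) sends $C_{\mathcal{A}\boxtimes\mathcal{B}}(\EE)$ to $\EE$ and keeps the local-module category modular with the asserted re-embedding of $\EE$; verifying that $\DD_2\boxtimes_\CC\ol{\DD_1}$ genuinely centralizes down to $\EE$ rather than to a larger subcategory (this is where $\CC'=\EE$ and the identity \eqref{3.1} must be invoked carefully inside the local-module category); and establishing the compatibility $\bigl(\DD\boxtimes_\CC\ol{\DD'}\bigr)\boxtimes_\EE\DD'\cong\DD$ relating the $\boxtimes_\CC$ and $\boxtimes_\EE$ products. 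All of these are statements about connected étale algebras and their local modules in modular categories, so the dimension identities of \cite{KO,DGNO} and the centralizer formula \eqref{3.1} are the essential tools; the work lies in keeping the embeddings and centralizers coherent across the condensations.
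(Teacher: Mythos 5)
The first thing to note is that the paper does not prove Theorem \ref{LKW1} at all: it is quoted from \cite{LKW1}, and the paper only recalls the one ingredient it needs later, namely the definition of the relative product in \eqref{eq:product} of Section 7. So your proposal has to be measured against the proof in \cite{LKW1}, and as an outline it does follow that proof: the same product $(\mathcal{A}\boxtimes\mathcal{B})^0_{L_\EE}$ obtained by condensing the canonical algebra of $\EE$, the same candidates $\ZZ(\EE)$ for the unit and $\ol{\mathcal{A}}$ for the inverse, and the same difference element $(\DD_2\boxtimes\ol{\DD_1})^0_{L_\CC}$ for transitivity. Your dimension bookkeeping (via Lemma \ref{l3.1} and $\FPdim(\mathcal{X}^0_A)=\FPdim(\mathcal{X})/\FPdim(A)^2$) is correct, as is the deduction $C_{\DD_i}(\CC)=\EE$ from \eqref{3.1}.

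The genuine gap is that every identification which actually constitutes the group and torsor axioms is left unproven, and the one mechanism you offer for such an identification cannot work. You propose to get $\mathcal{A}\boxtimes_\EE\ol{\mathcal{A}}\cong\ZZ(\EE)$ from $\mathcal{A}\boxtimes\ol{\mathcal{A}}\cong\ZZ(\mathcal{A})$ ``together with the dimension count''; but all minimal modular extensions of $\EE$ have the same dimension $\FPdim(\EE)^2$, and the content of the theorem is precisely that they nevertheless fall into $|H^3(G,S^1)|$ distinct classes, so no dimension count can tell $\ZZ(\EE)$ apart from $\ZZ(\Vec_G^\a)$ with $\a\neq 1$. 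Identifying $(\ZZ(\mathcal{A}))^0_{L_\EE}$ as $\ZZ(\EE)$ \emph{with its canonical embedding of $\EE$} (equivalences in $\MM(\EE)$ must intertwine the embeddings, since elements are pairs $(\DD,\iota)$) requires the \'etale/Lagrangian algebra machinery --- Schauenburg's theorem $\ZZ(\mathcal{X})^0_A\cong\ZZ(\mathcal{X}_A)$ together with the de-equivariantization analysis of \cite{DGNO}, which shows that an extension with $C_{\mathcal{A}}(\EE)=\EE$ corresponds to a faithfully graded $G$-crossed category of the form $\Vec_G^\a$ --- and this is the actual substance of the proof in \cite{LKW1}. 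The same objection applies to the unit axiom $\mathcal{A}\boxtimes_\EE\ZZ(\EE)\cong\mathcal{A}$, which your outline never addresses, and to the torsor identities $(\DD_2\boxtimes_\CC\ol{\DD_1})\boxtimes_\EE\DD_1\cong\DD_2$ and $(\MM\boxtimes_\EE\DD_1)\boxtimes_\CC\ol{\DD_1}\cong\MM$, which you correctly isolate as the heart of the matter but then file under ``anticipated obstacles'' rather than resolve. In short, the constructions and the numerology are right, but everything that makes $\MM(\EE)$ a group acting freely and transitively on $\MM(\CC)$ --- the equivalences of braided $\EE$-categories themselves --- is missing from the proposal.
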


 The definition of the product on $\MM(\EE)$  and the action of  $\MM(\EE)$ on  $\MM(\CC)$  are quite complicated, and we will discuss later in Section 6 in details.
 
 An object $A$ in a braided fusion category $\CC$ is called a \emph{commutative algebra}  if there are morphisms
$\mu: A\boxtimes A\to A$ and $\eta: {\bf 1_\CC}\to A$ such that $$\mu\circ(\mu\boxtimes \id_A)\circ \alpha_{A,A,A} = \mu\circ(\id_A\boxtimes\mu),\ \mu = \mu\circ c_{A,A}$$
$$\mu\circ(\eta\boxtimes \id_A)\circ l_A^{-1}=\id_A =\mu\circ(\id_A\boxtimes\eta) \circ r_A^{-1}$$
 where $\alpha_{A,A,A}: A\boxtimes (A\boxtimes A) \to (A\boxtimes A)\boxtimes A$ is the associativity isomorphism, and  $l_A: \1_\CC\boxtimes A\to A$ and $r_A:A \bo \1_\CC \to A$ are respectively the left and the right unit isomorphisms.  A commutative algebra $A$ in $\CC$ is called \emph{connected} if $\dim \Hom_\CC(\1_\CC, A)=1$.

Let $A$ be a connected commutative algebra in $\CC$. A right $A$-module $M$ is an object in $\CC$ with a morphism $\mu_M:  M \bo A \to M$ such that $\mu_M\circ ( \id_M \bo \mu)=\mu_M\circ ( \mu_M \bo \id_A)\circ \alpha_{M, A,A}$ and 
$\mu_M\circ(\id_M \bo \eta)=r_M$ where $r_M:M \bo \1_\CC \to M$ is the right unit isomorphism.  If $M, N$ are right $A$-modules, a morphism $f: M \to N$ in $\CC$ is called an $A$-module morphism if $\mu_N \circ(f \bo \id_A) = \mu_M$. We denote the category of right $A$-modules by $\CC_A.$ 

The left $A$-modules are defined similarly. Since $A$ is commutative, a right $A$-module $M$ admits two natural left $A$-module structure on $M$, namely $m, \ol m : A \bo M \to M$ defined by
$$
m= \mu_M \circ c_{A,M} \quad \text{and}\quad \ol m = \mu_M \circ \ol c_{A,M}\,.
$$
These left $A$-module structures on $M$ defines two $A$-bimodule structures on $M$, and they coincide when $M \in C_\CC(A)$. We will consider any right $A$-module as an $A$-bimodule under the left $A$-action $\ol m$ as discussed. An $A$-module $M$ is called \emph{local} if
$\mu_M\circ c_{M,A}\circ c_{A,M}=\mu_M.$ We denote the local $A$-module category by $\CC_A^0.$ It is immediate to see that the $A$-modules in $C_\CC(A)$ are local $A$-modules.

An algebra $A$ in $\CC$ is an $A$-bimodule under product map $\mu$ of an algebra $A$. If $\mu: A \boxtimes A \to A$ splits as $A$-bimodule morphism in $\CC$, then $A$ is called \emph{separable}. Following the terminology in \cite{LKW1}, an algebra $A$ in ribbon tensor category $\CC$ is said to be \emph{condensable} if $A$ is a commutative, separable and connected algebra in $\CC$ with $\dim(A)\ne 0$ and $\theta_A=\id_A$.

The $A$-module category,  $\CC_A$, is a fusion category with the tensor product $M\boxtimes_A N$, where $N$ is considered as an $A$-bimodule under the preceding convention. Moreover, the category $\CC_A^0$ of local $A$-modules is a braided fusion category. Moreover, if $\CC$ is modular tensor category and $A$ is a condensable algebra in $\CC$,  then $\CC_A^0$ is modular \cite{KO}. We will only consider pseudounitary fusion category $\CC$ with $\dim(X) = \FPdim(X)$ for all object $X \in \CC$. In the case, the condition $\dim(A) \ne 0$ is satisfied automatically. Moreover, an embedding of (pseudounitary) braided fusion categories preserves the canonical pivotal structures and hence their ribbon  structures. 

The following identities give relations among dimensions of relevant categories $\CC$ and condensable algebras $A$  in $\CC$:
$$\FPdim(\CC_A) =\frac{\FPdim(\CC)}{\FPdim(A)},\  \FPdim(\CC_A^0) =\frac{\FPdim(\CC)}{\FPdim(A)^2}$$
where the first identity holds for any braided fusion category $\CC$ and the second identity requires that $\CC$ is modular \cite{DMNO}.

We have mentioned that for a  finite group $G,$ $\Rep(G)$ is a symmetric fusion category. Then $A=\C[G]^*$ is a condensable in $\Rep(G)$, called the \emph{regular algebra} of $\Rep(G)$. Then,  $\Rep(G)_A=\Rep(G)_A^0$ is equivalent to the category $\Vec$ of finite dimensional vector spaces.  Furthermore, any 
condensable algebra in $\Rep(G)$ is given by $\C[G/H]^*$ where $H$ is a subgroup of $G$ \cite{KO}.

We now discuss the modular extension of $\EE=\Rep(G).$ Let $(\MM,\iota_\MM)$ be an MME  of $\EE.$ Then  the regular algebra $A$ of $\EE$ is a condensable algebra in  $\MM$. Following \cite{DGNO},  $\MM_A$ is a pointed fusion category equivalent to $\Vec_G^\alpha$ for some $\alpha\in H^3(G,S^1).$
That is, $\MM_A=\bigoplus_{g\in G}(\MM_A)_g$ and each $(\MM_A)_g\cong \Vec$ as $\C$-linear categories and 
$$(\MM_A)_g\boxtimes_A (\MM_A)_h\cong (\MM_A)_{gh}.$$ We denote the simple 
object of $(\MM_A)_g$ by $e(g)$ up to isomorphism. Then 
$$\alpha: (e(g)\boxtimes_A( e(h))\boxtimes_A e(k) \to e(g)\boxtimes_A (e(h) \boxtimes_A e(k))$$
gives the associativity isomorphism of $\MM_A$. Moreover, $\MM$ and $\ZZ(\Vec_G^\alpha)$ are braided equivalent. Furthermore, $\ZZ(\Vec_G^\alpha)$ is an MME of $\Rep(G)$ for any $\alpha\in H^3(G,S^1).$ It is well known that $\ZZ(\Vec_G^\alpha)$ is braided equivalent to the representation category of the twisted Drinfeld double $D^{\alpha}(G)$ of $G$ \cite{DPR}. It was proved in \cite{LKW1} that $(\MM, \iota_\MM)$ is equivalent to $(\ZZ(\Vec_G^\alpha), \iota_\alpha)$ where $\iota_\alpha: \EE \to \ZZ(\Vec_G^\alpha)$ is the canonical embedding, which can be described as follows: 
Recall that the center $\ZZ(\Vec_G^\a)$ of $\Vec_G^\a$ consists of the pairs $(X, c_{X, -})$, in which $X \in \Vec_G^\a$ and $c_{X, Y}: X \ot Y\to Y \ot X$, called an \emph{half-braiding}, is a natural isomorphism for $Y \in \Vec_G^\a$ satisfying compatibility conditions (cf. \cite{EGNO} for the center construction). For each $X \in \Rep(G)$, we consider $X$ as a homogeneous vector space of grading 1, and we define the half-braiding $c_{X,-}$ by setting $c_{X, e(g)} : X \ot e(g) \to e(g) \ot X$, $x \ot 1 \mapsto 1 \ot g^{-1} x$ for $x \in X$. This assignment $X \mapsto (X, c_{X, -})$ can be extended to an embedding, i.e., a faithful and full braided tensor functor, $\iota_\a: \EE \to \ZZ(\Vec_G^\a)$. 

By \cite{LKW1}, $(\MM,\iota_\MM) \cong (\ZZ(\Vec^\a_G), \iota_\a)$ as braided $\EE$-categories, and  the map $\Phi_G: \a \mapsto (\ZZ(\Vec^\a_G), \iota_\a)$ defines a group isomorphism from $H^3(G, S^1)$ to $\MM(\EE)$.
\section{Modular categories associated to vertex operator algebras}

Let $V$ be a rational, $C_2$-cofinite vertex operator algebra of CFT type such that the conformal weight of any irreducible $V$-module $M$  is nonnegative, and is zero if and only if $M=V.$ Then the $V$-module category $\CC_V$ is a modular tensor category
\cite{Hu}. For the purpose of later discussion we need  details on the tensor product of two modules explicitly. So we first give a brief review on the construction of the tensor product  of modules for $V$-modules  from \cite{HL1,HL2,HL3,Hu}. 

For any complex number  $z\in\C^{\times}=\C\setminus \{0\},$
there is a tensor product $W_1\boxtimes_{P(z)} W_2$ for any $V$-modules $W_1,W_2$ together with a canonical intertwining operator $I=I_{\boxtimes_{P(z)}}$ of type  $\left(_{W_1,\ \ W_2}^{W_1\boxtimes_{P(z)} W_2}\right)$ satisfying some universal property such that for $w_i\in W_i,$ there is a tensor element 
$${w_1\boxtimes_{P(z)}w_2}=I(w_1,z)w_2\in\overline{W_1\boxtimes_{P(z)} W_2}$$
where $\overline{W_1\boxtimes_{P(z)} W_2}$ is the formal completion of $W_1\boxtimes_{P(z)} W_2$. The operator $I(w_1,z)$ is  understood to be $\sum_{n \in \BR} (w_1)_n z^{-n-1}$ where $(w_1)_n \in \Hom(W_2, W_1\boxtimes_{P(z)} W_2)$, and $z^n=e^{n\log z}$ for any $n\in\BR$ with $\log z=\log |z|+i {\rm arg}z$ and $0\leq{\rm arg}z<2\pi.$ 
Moreover, $W_1\boxtimes_{P(z)} W_2$ is spanned by the coefficients
of $z^n$ for all $w_i$ and $n\in \BR.$ If we have three $V$-modules $W_i$ for $i=1,2,3$ there is an associativity isomorphism
$$ A_{z_1,z_2}: W_1\boxtimes_{P(z_1)}(W_2\boxtimes_{P(z_2)}W_3) \to (W_1\boxtimes_{P(z_1-z_2)}W_2)\boxtimes_{P(z_2)}W_3$$
characterized by 
$$w_1\boxtimes_{P(z_1)}(w_2\boxtimes_{P(z_2)}w_3) \mapsto (w_1\boxtimes_{P(z_1-z_2)}w_2)\boxtimes_{P(z_2)}w_3$$
for $|z_1|>|z_2|>|z_1-z_2|>0.$ The tensor product in the modular tensor category $\CC_V$ is given by $\boxtimes=\boxtimes_{P(1)}.$ We will simply denote the corresponding intertwining operator $I_{\boxtimes_{P(1)}}$ by $I.$

To discuss the braiding and associativity isomorphism in $\CC_V$ we also need the natural parallel transport isomorphisms.
 Fix $V$-modules $W_1,W_2$ and nonzero complex numbers $z_1,z_2$, for any continuous path $\gamma$ in $\C^\times$  from $z_1$ to $z_2,$  the parallel transport isomorphisms  
  $T_{\gamma}:W_1\boxtimes_{P(z_1)} W_2\to W_1\boxtimes_{P(z_2)} W_2 $ is determined by the extension
  \begin{eqnarray*} \overline{T_{\gamma}  }: &\overline{W_1\boxtimes_{P(z_1)} W_2} &\to   \overline{W_1\boxtimes_{P(z_2)} W_2}  \\
  &w_1\boxtimes_{P(z_1)} w_2 &\mapsto  I_{\boxtimes_{P(z_2)}}(w_1,e^{l(z_1)})w_2  
  \end{eqnarray*}
  where $l(z_1)$ is the value of $\log z_1$ determined uniquely by $\log z_2$ with ${\rm arg}z_2\in [0,2\pi)$ and the path. 
  The braiding isomorphism $c_{W_1,W_2}: W_1\boxtimes W_2\to W_2\boxtimes W_1$ is determined by 
  $$\overline{c_{W_1,W_2}}(w_1\boxtimes w_2)=e^{L(-1)}\overline{T_{\gamma^-}}(w_2\boxtimes_{P(-1)}w_1)=e^{L(-1)}I(w_2, e^{\pi i})w_1$$
  where $\gamma^-$ is a path on the upper half plane without $0$ from $-1$ to $1.$ Then $c_{W_2,W_1}^{-1}$ is determined 
  by 
  $$\overline{c_{W_2,W_1}^{-1}}(w_1\boxtimes w_2)=e^{L(-1)}\overline{T_{\gamma^+}}(w_2\boxtimes_{P(-1)}w_1)=e^{L(-1)}I(w_2, e^{-\pi i})w_1$$
  $\gamma^+$ is a path on the lower half plane without $0$ from $-1$ to $1.$ 
  The associativity isomorphism 
  $$A_{W_1,W_2,W_3}: W_1\boxtimes(W_2\boxtimes W_3) \to (W_1\boxtimes W_2)\boxtimes W_3$$
  is given by
 $$ A_{W_1,W_2,W_3}=  T_{\gamma_3} \circ (T_{\gamma_4} \boxtimes_{P(z_2)}\id_{W_3} ) \circ A_{z_1,z_2} \circ (\id_{W_1}\boxtimes_{ P(z_1) }T_{\gamma_2} )\circ T_{\gamma_1}$$
 where $z_1>z_2>z_1-z_2>0,$ $\gamma_1,\gamma_2$ are paths in $\BR^\times=\BR\setminus\{0\}$  from $1$ to $z_1,$ $z_2,$ respectively,  and $\gamma_3,\gamma_4$ are paths in the real line with $0$ from $z_2,$ $z_1-z_2$ to $1,$ respectively. 

We now investigate more on the modular tensor category $\overline{\CC_V}$ with braiding $c_{W_2,W_1}^{-1}.$
The difference between these two braidings is how we choose $(-1)^n$ for any rational number $n.$ For braiding $c_{W_1,W_2},$ 
$(-1)^n$ is understood to be $e^{\pi in}$ and  for braiding $c_{W_2,W_1}^{-1},$ 
$(-1)^n$ is understood to be $e^{-\pi in}$ (see the proof of Theorem \ref{t5.2}). So the two different braidings in the $V$-module category really comes from the two different ways of choosing the skew symmetry 
for intertwining operators which define the tensor product. 

 There is a twist $\theta_W=e^{2\pi i L(0)}: W\to W$ for any $W\in \CC_V.$ If $W$ is irreducible then $\theta_W=e^{2\pi i \Delta_W}$ where 
$\Delta_W$ is the weight of $W.$ Then the twist in the modular tensor category $\overline{\CC_V}$ is given by $\overline{\theta}_W=e^{-2\pi iL(0)}.$
If $W$ is irreducible,  $\overline{\theta}_W$ is exactly the complex conjugation of $\theta_W$ as $\Delta_W$ is a rational number. The relation
$\ol{\theta}_{W_1\boxtimes W_2}=\ol{c}_{W_2,W_1}\circ \ol{c}_{W_1,W_2}\circ (\ol{\theta}_{W_1}\boxtimes \ol{\theta}_{W_2})$
is immediate by taking the inverse of the relation $\theta_{W_1\boxtimes W_2}=c_{W_2,W_1}\circ c_{W_1,W_2}\circ (\theta_{W_1}\boxtimes \theta_{W_2}).$

Here is a natural question: Assume that $V$ is a rational, $C_2$-cofinite vertex operator algebra. Is there a rational vertex operator algebra $U$ such that $\overline{\CC_V}$ and $\CC_U$ are braided equivalent? 

 \begin{conj}\label{conjecture4.1} Assume that $V$ is a rational, $C_2$-cofinite vertex operator algebra. Then there is a rational, $C_2$-cofinite vertex operator algebra $U$ such that $\overline{\CC_V}$ and $\CC_U$ are braided equivalent. 
 \end{conj}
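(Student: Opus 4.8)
The plan is to reduce Conjecture~\ref{conjecture4.1} to the VOA-theoretic reformulation stated in the Introduction, supply the categorical dictionary that makes the reduction work, and then verify the reformulation in the families where a holomorphic cover is available.

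\emph{Reduction.} Suppose one can produce a holomorphic, rational, $C_2$-cofinite VOA $H$ of CFT type containing $V$ as a sub-VOA, such that the commutant $U := C_H(V)$ is again rational and $C_2$-cofinite, the product $V \ot U$ embeds conformally in $H$, and the \emph{double-commutant} condition $C_H(U) = V$ holds. I claim this already yields $\ol{\CC_V} \cong \CC_U$. Since $\CC_{V \ot U} \cong \CC_V \bo \CC_U$, the extension $H \supseteq V \ot U$ corresponds, via the extension/algebra correspondence \cite{KO}, to a connected commutative (condensable) algebra $A$ in the modular tensor category $\CC_V \bo \CC_U$, and $(\CC_V \bo \CC_U)_A^0 \cong \CC_H = \Vec$ because $H$ is holomorphic. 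By the dimension formula for local modules, $\FPdim(A)^2 = \FPdim(\CC_V \bo \CC_U) = \FPdim(\CC_V)\,\FPdim(\CC_U)$, so $A$ is a Lagrangian algebra in $\CC_V \bo \CC_U$.

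\emph{From the Lagrangian algebra to the braided equivalence.} As a $V \ot U$-module, $A = H$ decomposes as $H = \bigoplus_i M_i \ot N_i$ with $M_i \in \Or(\CC_V)$ and $N_i \in \Or(\CC_U)$; in categorical terms $A = \bigoplus_i M_i \bo N_i$. The role of the double-commutant hypothesis is precisely that $A$ contains no summand $M \bo \1$ with $M \ne \1$ (this is $C_H(U) = V$) and no summand $\1 \bo N$ with $N \ne \1$ (this is $C_H(V) = U$). A Lagrangian algebra in a Deligne product of modular categories with these two faithfulness properties is the graph of a braided tensor equivalence between $\CC_U$ and the reverse $\ol{\CC_V}$ (cf. the structure of Lagrangian algebras in \cite{DMNO,DGNO}); equivalently, $M_i \mapsto N_i$ extends to such an equivalence. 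The output is $\ol{\CC_V}$ rather than $\CC_V$ for the reason highlighted in the text: since $H$ has integral conformal weights, each summand satisfies $\Delta_{M_i} + \Delta_{N_i} \in \Z$, whence $\theta_{N_i} = \theta_{M_i}^{-1} = \ol{\theta}_{M_i}$, matching the twist of $\ol{\CC_V}$ and the choice $(-1)^n = e^{-\pi i n}$ for the transported braiding. This establishes the reduction.

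\emph{Construction of $H$ and the main obstacle.} It remains to build $H$. For a lattice VOA $V = V_L$ with $L$ positive-definite even, embed $L$ as a primitive sublattice of a positive-definite even unimodular lattice $\tilde L$ and set $H = V_{\tilde L}$ (holomorphic) and $U = C_H(V_L) = V_{L^{\perp}}$; the double-commutant $C_H(U) = V_L$ follows from primitivity, and Nikulin's identification $(L^{\perp})^{*}/L^{\perp} \cong -(L^{*}/L)$ of discriminant forms reproduces $\CC_{V_{L^{\perp}}} \cong \ol{\CC_{V_L}}$ directly. For the affine VOAs attached to integrable highest-weight modules and for the Virasoro discrete series, one obtains $H$ from the standard coset/conformal-embedding constructions (e.g.\ the GKO coset realizing the minimal models inside a tensor product of integrable affine VOAs), taking $U$ to be the relevant commutant and checking the double-commutant property case by case. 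The genuine difficulty---and the reason the statement is posed as a conjecture---is the \emph{existence of such a holomorphic cover $H$ with $C_H(C_H(V)) = V$ for an arbitrary rational, $C_2$-cofinite $V$}. The categorical steps above show this is the only missing ingredient: any holomorphic embedding realizing the double commutant automatically produces the desired $U$ together with the braided equivalence $\ol{\CC_V} \cong \CC_U$.
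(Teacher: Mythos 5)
Your proposal is correct in what it claims and, at the level of overall strategy, matches the paper exactly: since the statement is a conjecture, neither you nor the authors prove it in general; both reduce it to the existence of a holomorphic cover $H \supseteq V \otimes U$ with the double-commutant property (this is the paper's Theorem \ref{t4.2}) and then verify that reduction in special families (the paper's Proposition \ref{p4.3} for lattices), and you are explicit that the existence of $H$ for arbitrary rational $C_2$-cofinite $V$ is the genuinely open point. The real difference is in how the reduction step is proved. The paper establishes the direction ``holomorphic cover with $C_H(V)=U$, $C_H(U)=V$ $\Rightarrow$ $\ol{\CC_V}\simeq\CC_U$'' by citing Krauel--Miyamoto \cite{KM} (every irreducible $V$- and $U$-module occurs in $H$) followed by Lin's mirror-extension theorem \cite{Lin}; you instead argue purely categorically: the extension gives a condensable algebra $A$ in $\CC_V\boxtimes\CC_U$, holomorphy plus the local-module dimension formula makes $A$ Lagrangian, the two double-commutant hypotheses translate precisely into $A$ containing no off-unit summands of the form $M\boxtimes\1$ or $\1\boxtimes N$, and such a Lagrangian algebra is the graph of a braided equivalence $\ol{\CC_V}\simeq\CC_U$ by \cite{DMNO}, with the integral-weight/twist computation correctly explaining why the reverse braiding appears. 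This route is legitimate and is in fact half-present in the paper, whose proof of the \emph{forward} direction of Theorem \ref{t4.2} constructs exactly the graph algebra $\oplus_X X\otimes{\cal F}(X')$; your version has the merit of making both directions run through one categorical mechanism, while the paper's citation of \cite{KM} and \cite{Lin} keeps the VOA-theoretic content explicit. For the lattice case, your use of a primitive embedding and Nikulin's discriminant-form identification \cite{Ni} is closer to the paper's second proof via Proposition \ref{pointed} (pointed modular categories), whereas Proposition \ref{p4.3} itself uses the Griess--H\"ohn embedding theorem \cite{GH} and a simple-current argument to pin down the commutants; the two are interchangeable here.
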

 
 Let  $W=(W,Y,\1,\omega)$ be a vertex operator algebra and 
$U=(U,Y,{\bf 1},\omega^1)$ be a vertex operator subalgebra of $W$ such that $L(1)\omega^1=0.$  The {\em commutant} $C_W(U)$ of $U$ is defined to be
$$C_W(U)=\{w\in W|u_nw=0, u\in U,n\geq 0\}.$$
Set $\omega^{2}=\omega-\omega^{1}.$ Then $(C_W(U), Y, \1,\omega^2)$ is also a vertex operator subalgebra of $V$ \cite{GKO,FZ}. Here is a characterization of $\overline{\CC_V}:$
 \begin{thm}\label{t4.2} Let $V,U$ be as before. Then  $\overline{\CC_V}$ and $\CC_U$ are braided equivalent if and only if there exists a holomorphic vertex operator algebra $W$ such that $V\otimes U$ is a conformal subalgebra of $W$ satisfying 
 $C_{W}(V)=U$ and $C_W(U)=V.$ 
 \end{thm}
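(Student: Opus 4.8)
The plan is to establish the biconditional by connecting the braided equivalence $\overline{\CC_V}\simeq\CC_U$ with the theory of condensable algebras and modular extensions reviewed in Section 3. The key observation is that a holomorphic vertex operator algebra $W$ containing $V\otimes U$ as a conformal subalgebra corresponds, on the categorical side, to a condensable (Lagrangian) algebra $A$ in $\CC_{V\ot U}\simeq \CC_V\boxtimes\CC_U$ whose category of local modules $(\CC_V\boxtimes\CC_U)_A^0$ is trivial, i.e. equivalent to $\Vec$. Indeed, since $W$ is holomorphic, $\CC_W\simeq\Vec$, and by the general principle that $\CC_W\simeq(\CC_{V\ot U})_A^0$ for $A=W$ regarded as an algebra in $\CC_{V\ot U}$, the condition that $W$ be holomorphic forces $A$ to be Lagrangian: $\FPdim(A)^2=\FPdim(\CC_{V\ot U})=\FPdim(\CC_V)\cdot\FPdim(\CC_U)$.

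For the forward direction, I would assume $\overline{\CC_V}\simeq\CC_U$ as braided fusion categories and construct $W$. The standard construction of a Lagrangian algebra from such an equivalence proceeds as follows. Given a braided equivalence $\phi:\overline{\CC_V}\to\CC_U$, the object $A=\bigoplus_{X\in\Or(\CC_V)}X\boxtimes\phi(X')$ (or equivalently the image of the canonical coend/regular algebra under $\CC_V\boxtimes\overline{\CC_V}\to\CC_V\boxtimes\CC_U$) carries a natural condensable algebra structure in $\CC_V\boxtimes\CC_U\simeq\CC_{V\ot U}$ with $\FPdim(A)=\FPdim(\CC_V)=\sqrt{\FPdim(\CC_{V\ot U})}$, so $A$ is Lagrangian. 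The vertex-operator-algebraic content is that this categorical algebra $A$ is realized by an actual holomorphic vertex operator algebra $W$ extending $V\ot U$; here I would invoke the correspondence between condensable algebras in $\CC_{V\ot U}$ and rational conformal extensions of $V\ot U$ (the vertex-algebraic counterpart of Kirillov--Ostrik). The double-commutant conditions $C_W(V)=U$ and $C_W(U)=V$ then follow from the fact that $A$, built from the diagonal pairing of $\CC_V$ with $\overline{\CC_V}$, contains each simple $V$-module paired with its partner exactly once, which pins down the commutants precisely.

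For the reverse direction, suppose such a holomorphic $W$ exists with $V\ot U$ conformally embedded and $C_W(V)=U$, $C_W(U)=V$. Then $W$, as an algebra $A$ in $\CC_{V\ot U}\simeq\CC_V\boxtimes\CC_U$, is condensable with trivial category of local modules, hence Lagrangian, and the condition $C_W(V)=U$ together with $C_W(U)=V$ guarantees that the restriction decomposition of $W$ into $V\ot U$-modules has the diagonal form $W=\bigoplus_{i}M_i\ot N_i$ with the $M_i$ ranging over all simple $V$-modules, each appearing with multiplicity one, and the $N_i$ being the corresponding simple $U$-modules. This diagonal Lagrangian structure is exactly the graph of a braided equivalence; extracting it gives a functor $\overline{\CC_V}\to\CC_U$, and one checks it is a braided tensor equivalence. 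The sign subtlety discussed just before the theorem---that $\overline{\CC_V}$ rather than $\CC_V$ appears---enters here through the twist condition $\theta_A=\id_A$ (equivalently $\theta_{M_i}=\theta_{N_i}^{-1}$, i.e. $\Delta_{M_i}+\Delta_{N_i}\in\BZ$), which is precisely what matches the opposite braiding $\ol c$ on $\CC_V$.

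I expect the main obstacle to be the vertex-operator-algebraic realization step in the forward direction: passing from the purely categorical Lagrangian algebra $A$ to an honest holomorphic vertex operator algebra $W$ with the prescribed commutant structure. The categorical existence of $A$ and even of the fusion category $(\CC_{V\ot U})_A$ is routine, but showing that the abstract commutative algebra object is realized by a genuine vertex operator algebra extension---and that this extension is rational, $C_2$-cofinite, and holomorphic---requires the VOA extension theory (simple current / commutant constructions and the correspondence between condensable algebras and conformal extensions). Verifying that the braided equivalence recovered from $W$ genuinely intertwines the braidings $\ol c$ on $\CC_V$ and $c$ on $\CC_U$, rather than merely the underlying tensor structures, will require careful tracking of the skew-symmetry sign conventions described in the paragraph preceding the theorem, and this is where the bulk of the technical verification will lie.
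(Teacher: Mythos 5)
Your proposal is correct in outline, and your forward direction is essentially the paper's proof: both transport the canonical condensable algebra $\bigoplus_{X\in\Or(\CC_V)}X\boxtimes X'$ of $\CC_V\boxtimes\overline{\CC_V}$ through the given equivalence to obtain $W=\bigoplus_X X\boxtimes{\cal F}(X')$ in $\CC_V\boxtimes\CC_U\simeq\CC_{V\otimes U}$, realize it as a vertex operator algebra extension via \cite{HKL}, and conclude holomorphicity from $\FPdim\bigl((\CC_{V\otimes U})_W^0\bigr)=1$. Where you genuinely diverge is the reverse direction. The paper stays inside vertex operator algebra theory: it cites the modular-invariance result of \cite{KM} to conclude that \emph{every} irreducible $V$-module and $U$-module occurs in the holomorphic $W$, and then invokes the mirror-extension theorem (Theorem 3.3 of \cite{Lin}) to produce the braided equivalence $\overline{\CC_V}\simeq\CC_U$ in one stroke. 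You instead argue categorically: $W$ is a Lagrangian algebra in $\CC_V\boxtimes\CC_U$ whose intersections with the two tensor factors are trivial (this is exactly what $C_W(V)=U$ and $C_W(U)=V$ encode), and such a diagonal Lagrangian algebra is the graph of a braided equivalence $\overline{\CC_V}\to\CC_U$. That extraction step is not something one merely ``checks''---it is the correspondence between such Lagrangian algebras and braided equivalences established in \cite{DMNO}---so your argument is complete only modulo that citation; note also that the completeness of the list of $M_i$'s (all simple $V$-modules appear) follows from the Lagrangian property rather than from the commutant conditions alone, which is precisely the point the paper delegates to \cite{KM}. As for what each route buys: the paper's is shorter given the existing VOA literature and keeps all analytic subtleties inside results already proved there, while yours is more self-contained on the categorical side, makes the twist bookkeeping $\theta_{M_i}\theta_{N_i}=1$ (the reason $\overline{\CC_V}$ rather than $\CC_V$ appears) transparent, and would transfer verbatim to any setting where the extension dictionary of \cite{HKL} and \cite{CKM} is available.
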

 
 \begin{proof} It is well known that $\oplus_{X\in \Or(\CC_V)}X\otimes X'$ is a condensable algebra in the modular tensor category $\CC_V\otimes \overline{\CC_V}.$ If $\overline{\CC_V}$ and $\CC_U$ are braided equivalent, let ${\cal F}: \overline{\CC_V}\to \CC_U$ be a braided tensor functor giving the equivalence. Then $\Or(\CC_U)=\{{\cal F}(X')|X\in \Or(\CC_V)\}$ and
 $W=\oplus_{X\in \Or(\CC_V)}X\otimes {\cal F}(X')$ is a condensable algebra in $\CC_V\otimes \CC_U$, which is equivalent to $\CC_{V\otimes U}$ as braided tensor categories.
 It follows from \cite{HKL} that $W$ is a vertex operator algebra which is an extension of $V\otimes U.$ Since $\FPdim((\CC_{V\otimes U)})_W^{0})=1,$ one concludes immediately that $W$ is a holomorphic vertex operator algebra. It is clear that
 $C_{W}(V)=U$ and $C_W(U)=V$ from the construction of $W.$ 
 
 Now assume that there exists a holomorphic vertex operator algebra $W$ such that $V \ot U$ is a conformal subalgebra of $W$ satisfying $C_{W}(V)=U$ and $C_W(U)=V.$ Using a result in \cite{KM} we know that every irreducible $V$-module and $U$-module appear in $W$ as $W$ is holomorphic. 
 By Theorem 3.3 of \cite{Lin} we conclude that $\overline{\CC_{V}}$ and $\CC_{U}$ are braided equivalent.
 \end{proof}

 The following result asserts that Conjecture \ref{conjecture4.1} holds for lattice vertex operator algebra, which can also be obtained from Proposition \ref{pointed}.
 \begin{prop}\label{p4.3} Let $L$ be a positive definite even lattice, then $\overline{\CC_{V_L}}$ is braided equivalent to $\CC_{V_K}$ for some positive definite even lattice $K.$
 \end{prop}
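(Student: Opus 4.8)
The plan is to invoke Theorem \ref{t4.2}: it suffices to produce a holomorphic vertex operator algebra $W$ containing $V_L \otimes V_K$ as a conformal subalgebra with $C_W(V_L)=V_K$ and $C_W(V_K)=V_L$ for a suitable positive definite even lattice $K$. The natural candidate is $W=V_N$ for a positive definite even \emph{unimodular} lattice $N$ into which $L$ embeds primitively with orthogonal complement $K=L^\perp$. Indeed $V_N$ is holomorphic, and since $L$ and $K=L^\perp$ are mutually orthogonal primitive sublattices of $N$ with $L\oplus K$ of finite index, $V_L\otimes V_K=V_{L\oplus K}$ is a conformal subalgebra of $V_N$; a standard computation of commutants in lattice vertex operator algebras gives $C_{V_N}(V_L)=V_{L^\perp}=V_K$ and likewise $C_{V_N}(V_K)=V_L$. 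Theorem \ref{t4.2} then yields the braided equivalence $\overline{\CC_{V_L}}\cong \CC_{V_K}$.

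Thus everything reduces to the lattice-theoretic statement that a positive definite even lattice $L$ admits a primitive embedding into a positive definite even unimodular lattice $N$, which I would handle through discriminant forms. Recall the discriminant form $q_L$ on $A_L=L^\circ/L$ and Milgram's identity $\operatorname{sign}(q_L)\equiv \operatorname{rank}(L)\pmod 8$. By Nikulin's existence theorem for even lattices with prescribed discriminant form and signature, there is a positive definite even lattice $K$ with $q_K\cong -q_L$: one only needs $\operatorname{rank}(K)\equiv -\operatorname{rank}(L)\pmod 8$ and $\operatorname{rank}(K)>\ell(A_L)$, which is arranged by choosing the rank sufficiently large in the correct residue class (for instance by adjoining copies of $E_8$, which do not alter the discriminant form). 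Since $q_K\cong -q_L$, the finite quadratic form $q_{L\oplus K}=q_L\oplus q_K$ carries a Lagrangian isotropic subgroup $H\subseteq A_L\oplus A_K$, namely the graph of an isometry $q_L\cong -q_K$; the overlattice $N\subseteq (L\oplus K)^\circ$ corresponding to $H$ is even, positive definite, and unimodular, and inside it $L$ and $K$ sit as mutually orthogonal primitive sublattices, as required.

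Finally I would record that the discriminant form of $\CC_{V_K}$ is $q_K$ while that of $\overline{\CC_{V_L}}$ is $-q_L$ (its twist being $\overline{\theta}_\lambda=e^{-\pi i\langle\lambda,\lambda\rangle}$), so the construction matches the categorical data exactly; since both categories are pointed, $q_K\cong -q_L$ in fact already forces a braided equivalence, giving an alternative route independent of Theorem \ref{t4.2}. The main obstacle is the lattice-theoretic input: securing a positive definite $K$ with $q_K\cong -q_L$ in the allowed signature range and gluing $L\oplus K$ to a unimodular $N$. This is precisely where Nikulin's theory of discriminant forms and the signature/rank congruences are indispensable, and some care is needed to keep the sign of the quadratic form consistent with the vertex operator algebra conventions for the conjugate braiding.
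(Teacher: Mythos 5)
Your proposal is correct, and its skeleton coincides with the paper's: both reduce the statement via Theorem \ref{t4.2} to producing a positive definite even unimodular overlattice of $L\oplus K$ in which $L$ and $K$ are mutually orthogonal \emph{primitive} sublattices, and both then compute the two commutants inside the holomorphic lattice VOA. The genuine difference is where the lattice-theoretic input comes from. The paper simply cites Griess--H\"ohn \cite[Theorem 5.5]{GH}, which hands them an even unimodular $E\supseteq L$ with $L$ a direct summand of $E$ as abelian groups, and sets $K=L^{\perp}$; the direct-summand property is exactly what forces $L_1=L$ in the double-commutant step. You instead rebuild this embedding from scratch with Nikulin's machinery: produce $K$ positive definite even with $q_K\cong -q_L$ (rank $\equiv-\operatorname{rank}(L)\bmod 8$, rank $>\ell(A_L)$, padding with $E_8$'s), then glue $L\oplus K$ along the Lagrangian graph of an isometry $q_L\cong -q_K$ to get the unimodular $N$; your observation that the graph meets $A_L\oplus 0$ and $0\oplus A_K$ trivially is what secures primitivity, so your construction supplies the same hypothesis the paper imports from \cite{GH}. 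This is self-contained and arguably more transparent about \emph{why} the embedding exists, at the cost of redoing a known theorem; note only that your phrase ``a standard computation of commutants'' compresses the step the paper spells out (the commutant is a simple-current extension $V_{K_1}$ of $V_K$ inside $V_N$, and orthogonality plus primitivity force $K_1=K$, resp.\ $L_1=L$). Finally, your closing alternative --- that both categories are pointed with quadratic forms $-q_L$ and $q_K$, so $q_K\cong -q_L$ already yields the braided equivalence --- is precisely the paper's own second route: the statement of Proposition \ref{p4.3} notes it ``can also be obtained from Proposition \ref{pointed},'' and the remark after Proposition \ref{pointed} records exactly this argument.
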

 \begin{proof} First, the lattice vertex operator algebra $V_L$ \cite{B,FLM} is rational \cite{D1,DLM2}  and $C_2$-cofinite \cite{Z,DLM4}. So $\CC_{V_L}$ is a modular tensor category. If $L$ is also unimodular, $V_L$ is holomorphic and $\CC_{V_L}$
 is braided equivalent to $\Vec$ and $\overline{\CC_{V_L}}=\CC_{V_L}.$ Now we assume that $L$ is not unimodular. 
 By Theorem 5.5 of \cite{GH}, $L$ can be embedded in a positive definite even unimodular lattice $E$ and $L$ is a direct summand of $E$ as abelian groups and $O(L)$ embeds in $O(E)$ where $O(L)$ is the isometry group of $L.$ Then $V_E$ is a holomorphic vertex operator algebra. Let $K=L^{\perp}$ be the orthogonal complement of $L$ in $E.$ Then $V_{L\oplus K}=V_L\otimes V_K$ is a conformal subalgebra of $V_E$ in the sense that $V_{L\oplus K}$ and $V_E$ have
 the same Virasoro element.
 
 Let $C_{V_E}(V_L)$ be the commutant of $V_L$ in $V_E.$ We claim that $C_{V_E}(V_L)=V_K$ and $C_{V_E}(V_K)=V_L.$ Clearly,
 $V_K$ is a subalgebra of $C_{V_E}(V_L).$ Recall from \cite{D1} that the irreducible $V_K$-modules are given by $\{V_{K+\alpha_i}|i\in K^{\circ}/K\}$ where $K^{\circ}$ is the dual lattice of $K$ and $\alpha_i$ are the representatives of
 cosets of $K$ in $K^{\circ}.$ So $C_{V_E}(V_L)$ is a simple current extension of $V_K$ and there  is an even sublattice $K_1$ of $E$ containing $K$ such that $K_1$ and $K$ have the same rank, and $C_{V_E}(V_L)=V_{K_1}.$ This implies that $K_1$ is orthogonal to $L,$  $K_1=K$ and $C_{V_E}(V_L)=V_{K}.$ Similarly, $C_{V_E}(V_K)=V_{L_1}$ for an even sublattice $L_1$ of $E$ such that
 $L_1$ contains $L$ and $L_1,L$ have the same rank. Thus $L_1/L$ is a finite abelian group. The fact that $L$ is a direct summand 
 of $E$ as abelian groups forces $L_1=L.$ The result now follows from Theorem \ref{t4.2}
 \end{proof}

\begin{ex}
{\rm
Let $L_{A_1}=\Z\alpha$ with $(\alpha,\alpha)=2.$ Then $V_{L_{A_1}}$ has two irreducible modules $V_{L_{A_1}},V_{L_{A_1+\alpha/2}}.$ Let $K=L_{E_7}$
be the root lattice of type $E_7.$ Recall from  \cite{Hum} that the root lattice $L_{E_8}$  of type $E_8$ is spanned by
$\alpha_i$ for $i=1,...,8$ where $\alpha_1=\frac{1}{2}(\epsilon_1+\epsilon_8-(\epsilon_3+\cdots+\epsilon_7)),$  $\alpha_2=\epsilon_1+\epsilon_2,$ 
$\alpha_i=\epsilon_{i-1}-\epsilon_{i-2}$ for $i=3,...,8$ and $\{\epsilon_i| i=1,...,8\}$  is the standard  orthonormal basis of $\BR^8.$ Then $L_{E_7}$ can be identified with the sublattice  $\oplus_{i=1}^7\Z\alpha_i$ of $L_{E_8}$ and 
 $L_{A_1}$ can be identified with sublattice  $\Z(\epsilon_7+\epsilon_8)$ of $L_{E_8}.$ It is easy to see that $\oplus_{i=1}^7\Z\alpha_i$ and  $\Z(\epsilon_7+\epsilon_8)$ are orthogonal. 
 
 We claim that $L_{E_7}+L_{A_1}$ has index 2 in $L_{E_8}.$ Clearly, $\alpha_8=\epsilon_6-\epsilon_7$ does not lie in $L_{E_7}+L_{A_1}.$ So it is good enough to show that $2\alpha_8$ lies in $L_{E_7}+L_{A_1}.$ Observe that $2\epsilon_i$ belongs to
 $L_{E_7}+L_{A_1}$ for all $i.$ Thus $L_{E_8}=(L_{E_7}+L_{A_1})\cup (L_{E_7}+L_{A_1}+\alpha_8),$ as claimed. One can verify that
 $C_{V_{L_{E_8}}}(V_{L_{_{E_7}}})=V_{L_{A_1}}$ and  $C_{V_{L_{E_8}}}(V_{L_{_{A_1}}})=V_{L_{E_7}}$ by noting that $\alpha_8$ is not orthogonal to $L_{E_7}$  and $L_{A_1}.$ It is immediate from Theorem \ref{t4.2} that $\overline{\CC_{V_{L_{A_1}}}}$ and
 $\CC_{V_{L_{E_7}}}$ are braided equivalent.
 }
\end{ex} 

\section{$V^G$-modules }

In the rest of this  paper we assume the following:
\begin{enumerate}
\item[(V1)] $V=\oplus_{n\geq 0}V_n$ is a simple vertex operator algebra  of CFT type,
\item[(V2)] $G$ acts faithfully on $V$ as automorphisms such that $V^G$ is regular,
\item[(V3)] The conformal weight of any irreducible $V^G$-module $N$  is nonnegative and is zero if and only if $N=V^G.$
\end{enumerate}

If $\sigma \in \Aut(G)$, then the action of $G$ on $V$ can be twisted by $\sigma$, i.e., $g\cdot v = \sigma(g)v$. This twisted $G$ action on $V$ defines another automorphism group of $V.$ In general, if $G$ is an abstract group, it is possible to embed $G$ into $\Aut(V)$ in different ways. 

Assumption (V2) implies that $V$ is $C_2$-cofinite \cite{ABD} and $V$ is $g$-rational for all $g\in G$ \cite{ADJR}. The assumption
(V3) implies that 
the conformal weight of any irreducible $g$-twisted $V$-module except $V$ is positive, and that both $V^G$ and $V$ are selfdual.

We remark that if $G$ is solvable, then $V^G$ is regular if and only $V$ is regular.  For arbitrary $G$, the regularity of $V$ together with the $C_2$-cofiniteness of $V^G$ implies the rationality of $V^G$ \cite{Mc}.

From our assumptions,  both $\CC_{V}$ and $\CC_{V^G}$ are modular tensor category. 
Moreover,
$V$ is a condensable algebra in $\CC_{V^G}$ \cite{HKL}.  To simplify the notation, we use ${\rm Rep}(V)$ to denote the $V$-module category
$(\CC_{V^G})_V$ in $\CC_{V^G}$ \cite{KO}.  Then $\Rep(V)$ consists of every $V^{G}$-module
$W$ together with a $V^{G}$-intertwining operator $Y_W(\cdot,z)$ of type $\binom{W}{V\ W}$
such that the following conditions are satisfied:

1. (Associativity) For any $u,v\in V,$ $w\in W$ and $w'\in W'$, the formal series
\[
\langle w',Y_{W}(u,z_{1})Y_{W}(v,z_{2})w\rangle
\]
and
\[
\langle w',Y_{W}(Y(u,z_{1}-z_{2})v,z_{2}))w\rangle
\]
converge on the domains $|z_{1}|>|z_{2}|>0$ and
 $|z_{2}|>|z_{1}-z_{2}|>0$, respectively, to multivalued analytic functions
which coincide on their common domain.

2. (Unit) $Y_{W}({\bf 1},z)=\Id_{W}.$

It is proved in \cite{DLXY} that if $V^G$ satisfies conditions (V1)-(V3) then
$$\Rep(V)=\bigoplus_{g\in G}\Rep(V)_g$$
where $\Rep(V)_g$ is the $g$-twisted $V$-module category. Moreover, $\Rep(V)$ is a  fusion category \cite{KO}, \cite{CKM}, \cite{EGNO} with tensor product $\boxtimes_{\Rep(V)}.$ Furthermore, $\Rep(V)_1$ which is denoted by $\Rep(V)^0$ is exactly the
modular tensor category $\CC_V$ by \cite{HKL}.

We now discuss the connection between  the Frobenius-Perron dimension and the quantum dimension defined in \cite{DJX}, \cite{DRX} for a $g$-twisted
$V$-module $M=\oplus_{n\in\frac{1}{T}\Z_+} M_{\l+n}$ where  $T$ is the order of $g.$ Define the character of $M$
by
$$\chi_M(\tau)=q^{-c/24}\sum_{n\in\frac{1}{T}\Z_+}\dim M_{\l+n}q^{\l+n}$$
where $q=e^{2\pi i\tau}$ and $\tau$ lies in the upper half  plane. Then $\chi_M(\tau)$ is a modular function on a congruence subgroup \cite{Z}, \cite{DLN}, \cite{DR}. The quantum dimension of $M$ over $V$ is defined as
$$\qdim_V(M)=\lim_{\tau\to 0}\frac{\chi_M(\tau)}{\chi_V(\tau)}$$
which is always a positive algebraic number greater than or equal to 1. Note that $M$ is an object in the fusion category $\Rep(V).$
It turns out that $\qdim_V(M)=\FPdim(M).$

Our goal is to understand various fusion categories associated to $V^G.$ We first present a result on the classification of irreducible $V^G$-modules or
determine $\Or(\CC_{V^G})$ from \cite{DRX}.
For this purpose, we need the action of $G$ on $\Rep(V)$  \cite{DLM4}. Let $g, h$ be two automorphisms of $V$ with $g$ of finite order. If $(M, Y_M)$ is a $g$-twisted $V$-module, there is a  $h^{-1}gh$-twisted  $V$-module $(M\circ h, Y_{M\circ h})$ where $M\circ h\cong M$ as vector spaces and
\begin{equation*}
Y_{M\circ h}(v,z)=Y_M(hv,z)
\end{equation*}
for $v\in V.$
This defines a right action of $G$ on the twisted $V$-modules and on isomorphism classes of twisted $V$-modules. Similarly, we can define a left action of $G$ on the twisted $V$-modules and on isomorphism classes of twisted $V$-modules such that 
$h\circ M=M$ as vector spaces and $Y_{h\circ M}(v,z)=Y_M(h^{-1}v,z)$ for $v\in V.$ Then $G$ acts on $\Rep(V)$
as monoidal functors and $\Rep(V)$ is a braided $G$-crossed category \cite{Mc}.

If $g,h$ commute, $h$ clearly acts on the $g$-twisted modules.
 Denote by $\mathscr{M}(g)$ the equivalence classes of irreducible $g$-twisted $V$-modules and set $\mathscr{M}(g,h)=\{M \in \mathscr{M}(g)| M\circ h\cong M\}.$ Note from Theorem \ref{grational} that
 if $V$ is $g$-rational, both $\mathscr{M}(g)$ and $\mathscr{M}(g,h)$ are finite sets.
 For any $M\in \mathscr{M}(g,h),$ there is a $g$-twisted $V$-module isomorphism
\begin{equation*}
\phi(h) : M\to M\circ h.
\end{equation*}
The linear map $\phi(h)$ is unique up to a nonzero scalar. If $h=1$ we simply take $\phi(1)=\Id_M.$

Let $M=(M,Y_M)$ be an irreducible $g$-twisted $V$-module.
We define a subgroup $G_M$ of $G$ consisting of $h\in G$ such that $M\circ h$ and $M$ are isomorphic.
As we mentioned in Section 2 there  is a projective
representation $h\mapsto \phi(h)$ of $G_M$ on $M$ such that
$$
\phi(h)Y_M(v,z)\phi(h)^{-1}=Y_M(hv,z)
$$
for $h\in G_M$ and $v\in V.$ 
Let $\a_M$  be the corresponding 2-cocycle in $C^2(G,\C^{\times}).$
Then $\phi(h)\phi(k)=\a_M(h,k)\phi(hk)$
for all $h,k\in G_M$. We may assume $\alpha_M$ has finite order. That is, there is a fixed positive integer $n$ such that  $\alpha_M(h,k)^n=1$ for all $h,k\in G_M.$  Let $\C^{\a_M}[G_M]=\oplus_{h\in G_M} \C\bar h$ be the twisted group algebra with product $\bar h\bar k=\alpha_M(h,k)\ol{hk}.$ It is well known that $\C^{\a_M}[G_M]$ is a semisimple associative algebra. It follows that $M$ is a $\C^{\a_M}[G_M]$-module.

Let $\Lambda_{M}$ (which was denoted by $\Lambda_{G_M,\alpha_M}$ in \cite{DRX})  be the set of all irreducible characters $\lambda$
of  $\C^{\a_M}[G_M]$. Denote
the corresponding simple module by $W_{\l}.$
Using the fact  that $M$ is a semisimple
$\C^{\a_M}[G_M]$-module, we let $M^{\lambda}$ be the sum of simple
$\C^{\a_M}[G_M]$-submodules of $M$ isomorphic
to $W_{\l}.$ Then
$$M=\bigoplus_{\lambda\in \Lambda_{M}}M^{\lambda}.$$
Moreover, $M^{\lambda}=W_{\lambda}\otimes M_{\l}$
where $M_{\l}=\Hom_{\C^{\a_M}[G_M]}(W_{\l},M)$ is the multiplicity
of $W_{\l}$ in $M.$ As in \cite{DLM1}, we can,
in fact, realize $M_{\l}$ as a subspace of $M$ in the following way.  Let $w\in W_{\l}$
be a fixed nonzero vector. Then we can identify
$\Hom_{\C^{\a_M}[G_M]}(W_{\l},M)$ with the subspace
$$\{f(w) |f\in \Hom_{\C^{\a_M}[G_M]}(W_{\l},M)\}$$
 of $M^{\l}.$ This gives a decomposition

\begin{equation}\label{decom}
M=\bigoplus_{\lambda\in \Lambda_{M}}W_{\l}\otimes M_{\lambda}
\end{equation}
 and each $M_{\l}$ is a module for vertex operator subalgebra $V^{G_M}$-module.
Recall that the group $G$ acts on the set ${\cal S}=\cup_{g\in G}\mathscr{M}(g)$ and $ M\circ h$ and $M$ are isomorphic
$V^G$-modules for any $h\in G$ and $M\in {\cal S}.$ It is clear that the cardinality of the $G$-orbit $|M\circ G|$ of $M$ is
$[G:G_M].$ Let $J$ be the orbit representatives of $\S.$
Then we have the following results  \cite{DRX}, \cite{DJX}.

 \begin{thm}\label{mthm1} 
 Assume that $V$ satisfies (V1)-(V3).
 \begin{enumerate}
    \item[\rm (1)] 
 The set
 $$\{M_{\l}\mid M\in J,\,\l\in \Lambda_{M}\}$$
 gives a complete list of inequivalent irreducible $V^G$-modules. That is,
 any irreducible $V^G$-module is isomorphic to an irreducible $V^G$-submodule $M_{\l}$ for some $M\in J$ and  $\lambda\in \Lambda_M.$

 \item[\rm (2)] We have a relation between quantum dimensions
$$\qdim_{V^G}(M_\l)=\dim W_{\l}\cdot [G:G_M]\cdot \qdim_V(M)$$
 where $M$ is an irreducible $g$-twisted $V$-module
and $\lambda\in \Lambda_{M}.$ In particular,  $\Lambda_{V}=\Irr(G)$ is the set
of irreducible characters of $G$ and $\qdim_{V^G}(V_\l)=\dim W_{\l}$ for $\l\in \Irr(G)$.
\end{enumerate}

\end{thm}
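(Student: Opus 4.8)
The plan is to classify the irreducible $V^G$-modules one twisted sector at a time and then read off the quantum dimensions from the induction functor $\Ind\colon \CC_{V^G}\to\Rep(V)$, $N\mapsto V\boxtimes_{V^G}N$, attached to the condensable algebra $V$ in the modular category $\CC_{V^G}$.

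For (1), fix $g\in G$ and an irreducible $g$-twisted $V$-module $M$ with stabilizer $G_M$. The projective representation $\phi\colon G_M\to GL(M)$ with cocycle $\alpha_M$ makes $M$ a semisimple $\C^{\alpha_M}[G_M]$-module, yielding the isotypic decomposition (\ref{decom}), $M=\bigoplus_{\lambda\in\Lambda_M}W_\lambda\otimes M_\lambda$. Since $\phi(h)$ intertwines $Y_M(v,z)$ with $Y_M(hv,z)$, each $\phi(h)$ with $h\in G_M$ commutes with the modes of $V^{G_M}\supseteq V^G$, so the multiplicity spaces $M_\lambda$ are $V^{G_M}$-modules, hence $V^G$-modules. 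The structural heart is a \emph{double-commutant} (Schur--Weyl) statement: because $M$ is irreducible as a $g$-twisted $V$-module, the algebra generated by the modes $\{v_n:v\in V^{G_M}\}$ on $M$ and the image of $\C^{\alpha_M}[G_M]$ in $\End(M)$ are mutual commutants. Granting this, each nonzero $M_\lambda$ is irreducible over $V^{G_M}$, hence over $V^G$, and $M_\lambda\not\cong M_{\lambda'}$ for $\lambda\ne\lambda'$. This density theorem, proved in the style of Dong--Li--Mason, is the genuinely vertex-algebraic ingredient and the step I expect to be the \emph{main obstacle}; the remaining arguments are formal once it is in place.

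Exhaustion is then clean. For an arbitrary irreducible $V^G$-module $N$, Frobenius reciprocity for the condensable algebra $V$ gives $\Hom_{\Rep(V)}(\Ind N,\Ind N)\cong\Hom_{V^G}(N,\Ind N|_{V^G})$, and the image of $\id_{\Ind N}$ is a nonzero, hence injective, map $N\hookrightarrow \Ind N|_{V^G}$. As $\Rep(V)=\bigoplus_{g}\Rep(V)_g$ is a semisimple fusion category, $\Ind N$ is a sum of irreducible twisted modules, so $N\cong M_\lambda$ for some $M\in\mathscr{M}(g)$ and $\lambda\in\Lambda_M$; replacing $M$ by its representative in $J$ shows the list is complete. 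For distinctness I would invoke the classification of simple objects of an equivariantization: identifying $\CC_{V^G}$ with the $G$-equivariantization of the $G$-crossed braided category $\Rep(V)$ (de-equivariantization along the Tannakian subcategory $\simeq\Rep(G)$ generated by $V$), its simples correspond bijectively to pairs consisting of a $G$-orbit of simple objects of $\Rep(V)$ together with an irreducible $\alpha_M$-projective representation of the stabilizer, which is exactly the indexing $(M,\lambda)$ with $M\in J$. The double-commutant step above matches the abstract obstruction cocycle with the vertex-algebraic $\alpha_M$ and realizes the abstract multiplicity space as $M_\lambda$. Since $M\circ h\cong M$ as $V^G$-modules for every $h\in G$, passing to $J$ removes the only redundancy.

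For (2) I would track Frobenius--Perron dimensions, using $\FPdim=\qdim$ on these pseudounitary categories. The forgetful functor $F\colon\Rep(V)\to\CC_{V^G}$ satisfies $\FPdim_{\CC_{V^G}}(FY)=\FPdim(V)\cdot\FPdim_{\Rep(V)}(Y)$; applied to a twisted module $M$ and combined with $\qdim_{V^G}(M)=\qdim_{V^G}(V)\cdot\qdim_V(M)$ (both numerators being $\chi_M$), the factor $\FPdim(V)=\qdim_{V^G}(V)$ cancels to give $\FPdim_{\Rep(V)}(M)=\qdim_V(M)$. Because induction preserves Frobenius--Perron dimension, $\FPdim_{\Rep(V)}(\Ind M_\lambda)=\FPdim_{\CC_{V^G}}(M_\lambda)=\qdim_{V^G}(M_\lambda)$. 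By the bijection of (1) the twisted constituents of $\Ind M_\lambda$ are exactly the $[G:G_M]$ conjugates $M^{(i)}=M\circ h_i$ in the orbit of $M$, each occurring with multiplicity $\dim\Hom_{V^G}(M_\lambda,M^{(i)}|_{V^G})=\dim W_\lambda$; as conjugate twisted modules share the same quantum dimension $\qdim_V(M)$, taking $\FPdim$ gives
$$\qdim_{V^G}(M_\lambda)=\dim W_\lambda\cdot[G:G_M]\cdot\qdim_V(M).$$
Specializing to $g=1$, $M=V$, where $G_V=G$, the cocycle $\alpha_V$ is trivial so $\Lambda_V=\Irr(G)$, and $\qdim_V(V)=1$, recovers $\qdim_{V^G}(V_\lambda)=\dim W_\lambda$ for $\lambda\in\Irr(G)$, in agreement with the decomposition $V=\bigoplus_{\lambda\in\Irr(G)}W_\lambda\otimes V_\lambda$.
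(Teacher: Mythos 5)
The first thing to note is that the paper does not prove this theorem at all: it is recalled verbatim from \cite{DRX} and \cite{DJX} ("Then we have the following results \cite{DRX}, \cite{DJX}"), so your attempt must be judged on its own merits rather than against an internal argument.

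Your proof has a genuine logical gap at its core. You write that, granting the double commutant for the pair $\bigl(V^{G_M}, \C^{\alpha_M}[G_M]\bigr)$ acting on $M$, "each nonzero $M_\lambda$ is irreducible over $V^{G_M}$, hence over $V^G$." This implication runs in the wrong direction: since $G_M\subseteq G$ we have $V^G\subseteq V^{G_M}$, so a $V^G$-submodule of $M_\lambda$ need not be a $V^{G_M}$-submodule, and irreducibility over the \emph{larger} subalgebra $V^{G_M}$ says nothing about irreducibility over the \emph{smaller} $V^G$. Descending from $V^{G_M}$ to $V^G$ is precisely the hard content of the theorem; in \cite{DRX} (following \cite{DLM1}, \cite{DY}) it is handled by a Schur--Weyl duality applied not to $M$ alone but to the whole $G$-orbit $\bigoplus_i M\circ g_i$, with a crossed-product-type algebra built from $G$ and the cocycle $\alpha_M$ acting on the orbit sum. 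The factor $[G:G_M]$ in part (2) is exactly the footprint of this orbit-level argument, and it cannot be recovered from the stabilizer-level duality you state. The same gap infects your distinctness claim: deciding when $M_\lambda\cong N_\mu$ as $V^G$-modules requires the orbit duality, not just the $G_M$-duality plus the observation that $M\circ h\cong M$ over $V^G$.

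A secondary concern is circularity of the categorical inputs. Your exhaustion step uses $\Rep(V)=\bigoplus_{g\in G}\Rep(V)_g$ (cited in the paper to \cite{DLXY}), your distinctness step uses the equivariantization description of $\Or(\CC_{V^G})$ (cited to \cite{Mc}), and your part (2) uses $\FPdim_{\Rep(V)}(M)=\qdim_V(M)$ for twisted modules; in the chain of references this paper relies on, these statements are established using, or alongside, the very classification you are trying to prove. Indeed the paper's own proof of Theorem \ref{t5.2} derives the isomorphism $\Ind_{V^G}^V(M_\lambda)\cong\bigoplus_{i=1}^{[G:G_M]}W_\lambda\otimes M\circ g_i$ \emph{from} Theorem \ref{mthm1}, whereas you run that derivation in reverse to obtain the dimension formula. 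Granting part (1) and the identification $\FPdim=\qdim$, your arithmetic in part (2) is correct, and the Frobenius-reciprocity exhaustion argument is a clean alternative to the modular-invariance methods of \cite{DRX}; but as written the proposal both contains a false implication at the decisive step and rests on imported results whose proofs would need to be checked for independence from the theorem itself.
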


\section{ Fusion categories $\FF_{V^G}$  and  $\EE_{V^G}$}

Let $\FF_{V^G}$ be the subcategory of $\CC_{V^G}$ generated by $V^G$-submodules of $V$-modules and $\EE_{V^G}$
the subcategory of  $\CC_{V^G}$ generated by $V^G$-submodules of $V.$  We prove in this section that  $\EE_{V^G}$  is equivalent to $\Rep(G)$ as braided tensor categories, and $\FF_{V^G}$ is a braided fusion subcategory of
$\CC_{V^G}$ such that the Mu\"ger center $\FF_{V^G}'$ of $\FF_{V^G}$ is exactly $\EE_{V^G}.$
  
 \begin{thm}\label{theorem2} $\EE_{V^G}$ is a fusion subcategory of $\CC_{V^G}$  equivalent to the symmetric fusion category $\Rep(G)$ via a canonical braided tensor functor  $F^{V,G}: \Rep(G) \to \EE_{V^G}$. In particular,  $\EE_{V^G}$ is a symmetric fusion category.
\end{thm}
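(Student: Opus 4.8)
The plan is to exhibit an explicit braided tensor functor $F^{V,G}\colon \Rep(G)\to\CC_{V^G}$ landing in $\EE_{V^G}$, and then to verify that it is full, faithful, and essentially surjective onto $\EE_{V^G}$, compatible with the tensor and braiding structures. The construction should come straight out of the Schur--Weyl decomposition $V=\bigoplus_{\lambda\in\Irr(G)}W_\lambda\otimes V_\lambda$ recalled before Theorem~\ref{mthm1}. The recipe is to send the irreducible $G$-module $W_\lambda$ to the irreducible $V^G$-module $V_\lambda$ (the multiplicity space), and more generally to send an arbitrary $G$-module $W$ to the $V^G$-module $\Hom_G(W^*,V)$, or equivalently the $W$-isotypic assembly $\bigoplus_\lambda W_\lambda^{\oplus m_\lambda}\mapsto \bigoplus_\lambda V_\lambda^{\oplus m_\lambda}$. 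On objects this is manifestly an equivalence onto $\EE_{V^G}$ by Theorem~\ref{mthm1}(2), which identifies the $V_\lambda$ as distinct irreducibles with $\qdim_{V^G}(V_\lambda)=\dim W_\lambda$; the dimension matching is the first sanity check and should be recorded.

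First I would make the tensor structure precise. The key point is that the multiplication on $V$ is a $G$-equivariant intertwining operator, so the product of a $V_\lambda$-component and a $V_\mu$-component into $V_\nu$ is governed by the same branching data as the decomposition $W_\lambda\otimes W_\mu=\bigoplus_\nu W_\nu^{\oplus N^\nu_{\lambda\mu}}$ in $\Rep(G)$. Concretely, since $V$ is a (condensable, commutative) algebra in $\CC_{V^G}$, restricting the vertex-algebra product gives $V^G$-intertwining operators $V_\lambda\boxtimes V_\mu\to V$, and projecting onto isotypic components yields a natural isomorphism $F^{V,G}(W_\lambda)\boxtimes F^{V,G}(W_\mu)\cong F^{V,G}(W_\lambda\otimes W_\mu)$. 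I would assemble these into the tensor structure $J_{W,W'}$ and check the hexagon/associativity coherence; the cleanest way is to identify $V$ with the regular algebra $\C[G]^*$ in $\Rep(G)$ and transport, which is exactly the statement flagged for Section~6 (``the regular commutative algebra $\C[G]^*$ in $\EE$ corresponds to $V$''). The fusion coefficients agreeing on both sides is what forces $F^{V,G}$ to be a tensor functor rather than merely an object bijection.

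The hard part will be the braiding compatibility, i.e.\ showing $F^{V,G}$ intertwines the trivial symmetric braiding on $\Rep(G)$ with the braiding $c_{V_\lambda,V_\mu}$ in $\CC_{V^G}$, and in particular that $\EE_{V^G}$ is symmetric. Here the explicit braiding formula from Section~4, $\overline{c_{W_1,W_2}}(w_1\boxtimes w_2)=e^{L(-1)}I(w_2,e^{\pi i})w_1$, must be combined with the fact that each $V_\lambda$ has integral conformal weight: since $V=\bigoplus V_\lambda\otimes W_\lambda$ is an (untwisted, integer-graded) ordinary $V$-module, the components $V_\lambda$ sit inside $V$ with integer weights, so the monodromy/twist $\theta_{V_\lambda}=e^{2\pi i\Delta_{V_\lambda}}=\id$. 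Integrality of the weights kills the phase $e^{\pi i n}$ that distinguishes $c$ from $\bar c$, giving $c_{V_\mu,V_\lambda}\circ c_{V_\lambda,V_\mu}=\id$, hence symmetry and the matching with the trivial braiding on $\Rep(G)$. The technical nuisance is to run this not just on irreducibles but on the induced morphisms, and to confirm the associators introduce no anomalous cocycle --- equivalently that the embedding is the canonical Tannakian one, so that by Deligne's criterion (cited via \cite{De}) the symmetric category $\EE_{V^G}$ is $\Rep(G)$ for the given $G$ rather than a twisted or super-version.

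I would then conclude by invoking Deligne's theorem: $\EE_{V^G}$ is a symmetric fusion category admitting a fiber functor (the forgetful composite $\EE_{V^G}\xrightarrow{\sim}\Rep(G)\to\Vec$), the group is recovered as $G$ because the action is faithful (assumption (V2)), and the object-level bijection upgraded to a braided tensor equivalence finishes the proof. The main obstacle, to reiterate, is the braiding step --- specifically tracking the $e^{\pm\pi i n}$ sign conventions of Section~4 and verifying that integrality of the $V_\lambda$-weights makes the monodromy trivial --- since everything else (objects, dimensions, fusion rules) follows fairly directly from Theorem~\ref{mthm1} and the algebra structure of $V$ in $\CC_{V^G}$.
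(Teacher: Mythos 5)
Your construction is the paper's own: the same functor $F^{V,G}(X)=\Hom_G(X^*,V)$ built from the Schur--Weyl decomposition, the same appeal to the fusion rules of Theorem \ref{mthm1} (via \cite{T}, \cite{DRX}) to see that $\EE_{V^G}$ is a fusion subcategory, and the same symmetry argument (integrality of the $L(0)$-grading on the $V_\lambda$ gives $\theta_{V_\lambda}=1$, and the balancing axiom then forces $c_{V_\mu,V_\lambda}\circ c_{V_\lambda,V_\mu}=\id$). Two points, however, need repair.

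First, your proposed shortcut for the tensor-structure coherence --- ``identify $V$ with the regular algebra $\C[G]^*$ in $\Rep(G)$ and transport'' --- is circular: that identification is Theorem \ref{t5.3}, which the paper proves \emph{after}, and by means of, the braided equivalence $F^{V,G}$ of the present theorem. You cannot transport structure along an equivalence you have not yet built. The paper instead verifies coherence directly: it constructs $J_{X,Y}\colon F(X)\boxtimes_{P(z)}F(Y)\to F(X\otimes Y)$ from the intertwining operator ${\cal Y}(f,x)g(v\otimes u)=Y(fu,x)gv$ via the universal property of $\boxtimes_{P(z)}$, and then checks the associativity and unit diagrams against the Huang--Lepowsky associator $A_{W_1,W_2,W_3}$, which requires tracking the parallel-transport isomorphisms $T_\gamma$ between the various $\boxtimes_{P(z)}$; this is the bulk of the actual proof and is exactly what your sketch defers to the circular argument. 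The braided-functor property then follows from skew-symmetry, $Y(fu,z)gv=e^{zL(-1)}Y(gv,-z)fu$, in which only integer powers of $z$ occur --- this is your ``integrality kills the phase'' point made precise; note that symmetry of both categories alone would not suffice, since a tensor functor between symmetric categories need not be braided. Second, your closing appeal to Deligne's theorem cannot do the work you assign it: Deligne only yields $\EE_{V^G}\simeq\Rep(H)$ for \emph{some} finite group $H$, and faithfulness of the $G$-action does not by itself identify $H$ with $G$ --- this is precisely the gap the paper flags as its reason for constructing $F^{V,G}$ explicitly. Once the explicit braided tensor equivalence is in hand, Deligne is superfluous; before it is in hand, Deligne is insufficient.
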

\begin{proof} First we prove that $\EE_{V^G}$ is a braided fusion subcategory of $\CC_{V^G}.$ Since $\EE_{V^G}$ is a semisimple category and
each simple object is isomorphic to $V_{\l}$ for some $\l\in \Irr(G),$ it suffices to show that $V_\l\boxtimes V_\mu$ lies in  $\EE_{V^G}$ for
any  $\lambda,\mu\in \Irr(G).$ From \cite{T}, \cite{DRX} we know that
$$V_\l\boxtimes V_\mu=\sum_{\nu\in \Irr(G)}N_{\l,\mu}^{\nu}V_{\nu}$$
where the fusion rules $N_{\l,\mu}^{\nu}$ is given by the tensor product decomposition of $G$-module
$$W_\l\boxtimes W_\mu=\sum_{\nu\in \Irr(G)}N_{\l,\mu}^{\nu}W_{\nu}.$$
Thus, $\EE_{V^G}$ is closed under the tensor product and is a braided fusion subcategory of $\CC_{V^G}.$

Second, we establish that $\EE_{V^G}$ is a symmetric braided fusion category. Equivalently we need to show that
$$c_{V_{\mu}, V_{\l}}\circ c_{V_{\l},V_{\mu}}=\id_{V_{\l}\boxtimes V_{\mu}}$$
for any $\l,\mu.$  Since $\theta_{V_{\nu}}=1$ for all $\nu\in\Irr(G)$ we see that 
$$\id_{V_{\l}\boxtimes V_\mu}=\theta_{V_{\l}\boxtimes V_\mu}=c_{V_{\mu}, V_{\l}}\circ c_{V_{\l},V_{\mu}}\circ (\theta_{V_{\l}}\boxtimes \theta_{V_{\mu}})=c_{V_{\mu}, V_{\l}}\circ c_{V_{\l},V_{\mu}}.$$

Finally, we show that  $\EE_{V^G}$ is braided equivalent to the symmetric braided fusion category $\Rep(G)$. The categorical dimension $\dim(X)$  for an object $X$  in a spherical fusion category defined as the trace of the identity morphism of $X$. Under our assumption, $\dim(X)=\FPdim(X)$ always positive for any object $X$ in $\CC_{V^G}$ or $\Rep(V)$. The positivity of $\dim(V_\l)$ together with  the fact $\theta_{V_{\l}}=1$ implies that  $\EE_{V^G}$ is a Tannakian category. By \cite{De},  $\EE_{V^G}$ is braided equivalent to
$\Rep(H)$ for a finite group $H.$ The problem is we do not know why $H$ is isomorphic to $G.$ 

We now prove that  $\Rep(G)$  is braided equivalent to $\EE_{V^G}$ directly. For this purpose we  define a functor $F=F^{V,G}$ from $\Rep(G)$ to $\EE_{V^G}$ to be the composition functor 
$$F(X)=\Hom_{G}(X^*,V)=\bigoplus_{n\geq 0}\Hom_{G}(X^*,V_n)$$ where $X^*$ is the dual of $X.$  It is easy to see that $F(X)$ is a $V^G$-module
such that for $a\in V^G$ and $f\in F(X),$ $(Y(a,x)f)(u)=Y(a,x)fu$ for $u\in X^*$ as $X$ is a $G$-module. 
Now we prove that $F(X)$ lies in category $\EE_{V^G}.$
It is good enough to assume $X=W_{\lambda}$ for some $\lambda\in \irr(G).$ Note that $W_{\lambda}^*$ is isomorphic to $W_{\lambda^*}$ where $\lambda^*$ is the dual character of $\lambda.$ One can easily see that $F(X)$ is isomorphic to $V_{\lambda^*}$ and lies in  $\EE_{V^G}.$ Moreover, $\Hom_G(X^*,V_n)$ is the eigenspace of $L(0)$ 
with eigenvalue $n.$ 

We also need to deal with morphisms. Let $X,Y$ be two $G$-modules and $\alpha:X\to Y$ be a $G$-module morphism. Let $\alpha':Y^*\to X^*$
be the adjoint map. For $f\in F(X)$ define  $F(\alpha)(f)=f\alpha': F(X)\to F(Y)$. We assert that  $F(\alpha)$ is a $V^G$-module homomorphism.
%\sout{(b) $F(\id_X)=\id_{F(X)},$  (c) $F(\beta\circ \alpha)=F(\beta)\circ F(\alpha)$ for morphism $\alpha:X\to Y,$ $\beta: Y\to Z.$ For (a),} 
Let $f\in F(X)$. For any $v\in Y^*$ and $a\in V^G$ we see that $$F(\a)(Y(a,x)f)(v)=Y(a,x)f\alpha'(v)=Y(a,x)F(\alpha)(f)(v).$$
%\sout{(b) is clear as $\id_{X}'=\id_{X^*}.$(c) is a direct verification: $F(\beta\circ \alpha)(f)=f\circ(\beta\circ\alpha)'=f\circ\alpha'\circ\beta'=F(\beta)\circ F(\alpha).$}
So $F$ is a functor from $\Rep(G)$ to $\EE_{V^G}$. 
 
 Next we show that $F$ is a braided tensor functor. Let $X,Y$ be $G$-modules. We use the natural identification between $(X\otimes Y)^*$ with $Y^*\otimes X^*.$  For $f\in F(X)$ and $g\in F(Y),$ one can show that
 $${\cal Y}(f,x)g(v\otimes u)=Y(fu,x)gv$$
  for $v\otimes u\in Y^*\otimes X^*$ defines an intertwining operator of type  $\left(_{F(X),F(Y)}^{F(X\otimes Y)}\right).$  In fact, for any $a\in V^G$ and formal variables $x_0,x_1,x_2$ we have to prove that 
 \begin{align*}
 & x_{0}^{-1}\text{\ensuremath{\delta}}\left(\frac{x_{1}-x_{2}}{x_{0}}\right)Y(a,x_{1}){\cal Y}(f,x_{2})g-x_{0}^{-1}\delta\left(\frac{x_{2}-x_{1}}{-x_{0}}\right){\cal Y}(f,x_{2})Y(a,x_{1})g\\
 & =x_{2}^{-1}\delta\left(\frac{x_{1}-x_{0}}{x_{2}}\right)Y\left({\cal Y}\left(a,x_{0}\right)f,x_{2}\right)g
\end{align*} 
or 
  \begin{align*}
 & x_{0}^{-1}\text{\ensuremath{\delta}}\left(\frac{x_{1}-x_{2}}{x_{0}}\right)Y(a,x_{1})Y(fu,x_{2})gv-x_{0}^{-1}\delta\left(\frac{x_{2}-x_{1}}{-x_{0}}\right)Y(fu,x_{2})Y(a,x_{1})gv\\
 & =x_{2}^{-1}\delta\left(\frac{x_{1}-x_{0}}{x_{2}}\right)Y\left(Y\left(a,x_{0}\right)fu,x_{2}\right)gv.
\end{align*} 
This is obvious as the last identity is just the Jacobi identity in $V.$ Thus ${\cal Y}(f,z)g$ is an intertwining map of  type  $\left(_{F(X),F(Y)}^{F(X\otimes Y)}\right)$ for any $z\in\C^\times.$
By the universal mapping property of the tensor product
 $\boxtimes_{P(z)}$ we have a unique  $V^G$-module homomorphism 
 $$J_{X,Y}: F(X)\boxtimes_{P(z)} F(Y) \to F(X\otimes Y)=\Hom_G(Y^*\otimes X^*, V)$$ 
  characterized by
 $$\overline{J_{X,Y}}(f\boxtimes_{P(z)} g)(v\otimes u)=Y(fu,z)gv$$
 for any $u\in X^*, v\in Y^*.$  
 Then we have
   \begin{eqnarray*}
 \overline{J_{X,Y}}(f\boxtimes_{P(z)} g)(v\otimes u)&=&Y(fu,z)gv\\
&=&e^{zL(-1)}Y(gv,-z)fu \\
&=&e^{L(-1)z}\overline{J_{Y,X}}(g\boxtimes_{P(-z)} f)(u\otimes v)\\
&=&\overline{J_{Y,X}}(e^{L(-1)z}g\boxtimes_{P(-z)}f)(u\otimes v)\\
&=&\overline{J_{Y,X}}\overline{c_{F(X),F(Y)}}(f\boxtimes_{P(z)} g)(u\otimes v).
\end{eqnarray*}
In particular, 
$$\overline{J_{X,Y}}(f\boxtimes g)(v\otimes u)=\overline{J_{Y,X}}\overline{c_{F(X),F(Y)}}(f\boxtimes g)(u\otimes v).$$
Let 
$\pi_{X,Y}: X\otimes Y\to Y\otimes X$ be the natural braiding of the vector space tensor product. Then $F( \pi_{X,Y}) \overline{J_{X,Y}}(f\boxtimes g)(u\otimes v)= \overline{J_{X,Y}}(f\boxtimes g)(v\otimes u).$ That is,
$J_{X,Y}$  is a natural isomorphism such that the following commuting diagram holds
for objects $X,Y$ in $\Rep(G):$ 
\begin{equation*}
\begin{tikzcd}
F(X)\boxtimes F(Y) \arrow[r, "c_{F(X),F(Y)}"]  \arrow[d,"J_{X,Y}"]&F(Y)\boxtimes F(X)   \arrow[d,"J_{Y,X}"]\\
F(X\otimes Y) \arrow[r, "F(\pi_{X,Y})"] &F(Y\otimes X).
\end{tikzcd}
\end{equation*} 

Let $\e: V^G \to F^{V,G}(\underline{1})$ be the natural map $a \mapsto f_a$ where $f_a(1)=a$ for $a \in V^G$, where $\u1$ denote the trivial $G$-module $\C$. Then $\e$ is clearly a $V^G$-module isomorphism.
Now, We prove that $(F, J, \e)$ is a monoidal functor. That is, we need  to verify the following commuting diagrams
\begin{equation}\label{e1}
\begin{tikzcd}
F(X)\boxtimes (F(Y) \boxtimes F(Z))\arrow[r,"\id\otimes J_{Y,Z}"]  \arrow[d, "A_{F(X),F(Y),F(Z)}"] &F(X)\boxtimes  F(Y\otimes Z)\arrow[r,"J_{X, Y\otimes Z}"] & F(X\otimes  (Y\otimes Z))  \arrow[d, "F(a_{X,Y,Z})"] \\
(F(X)\boxtimes F(Y))\boxtimes F(Z) \arrow[r, "J_{X\otimes Y}\otimes \id"] & F(X\otimes Y)\boxtimes F(Z)  \arrow[r, "J_{X\otimes Y,Z}"]  &F((X\otimes Y)\otimes Z)\ \end{tikzcd}
\end{equation} 
\begin{equation}\label{e2}
\begin{tikzcd}
V^G \boxtimes F(X) \arrow[r,"\e \boxtimes \id"] \arrow[d,"l_{F(X)}"] & F(\u1) \boxtimes F(X) \arrow[d,"J_{\u1, X}"] \\
 F(X)  & F(\u1 \ot X) \arrow[l,"F(l_X)"] 
\end{tikzcd} \quad\text{and}\quad
\begin{tikzcd}
F(X) \boxtimes V^G \arrow[r,"\id \boxtimes \e"] \arrow[d,"r_{F(X)}"] &  F(X) \boxtimes F(\u1)  \arrow[d,"J_{X, \u1}"] \\
 F(X)  & F(X\ot \u1) \arrow[l,"F(r_X)"] 
\end{tikzcd} \quad
\end{equation}
where $a_{X,Y,Z}$, $r_X$, $l_X$ are respectively the associativity, left and right unit isomorphisms of vector spaces, and $l_F(X)$, $r_F(X)$ respectively denote the left and the right unit isomorphisms of $V^G$-mod.

Since the proofs of two commuting diagrams in (\ref{e2})
are similar, we only give a proof of the first commuting diagram.  Note that $l_{F(X)}: V^G \boxtimes F(X)\to F(X)$
is characterized by $l_{F(X)}(\b1\boxtimes g)=g$ for $g\in F(X).$ That is,
$l_{F(X)}(\b1\boxtimes g)(u)=g(u)$ for $u\in X^*.$ 
On the other hand, 
\begin{eqnarray*}
\begin{aligned}
(F(l_X)\circ J_{\u1,X}\circ \e\boxtimes \id)(\1\boxtimes g)(u)&=(J_{\u1,X}\circ \e\boxtimes \id)(\1\boxtimes g)(u\otimes 1)\\
&=J_{\u1,X}(f_{\1}\bo g) (u\otimes 1)\\
&=Y(\1,1)g(u)\\
&=g(u).
\end{aligned}
\end{eqnarray*}
That is, $F(l_X)\circ J_{\u1,X}\circ \e\boxtimes \id=l_{F(X)}.$

We now prove (\ref{e1}). Let $z_1, z_2>0$ and $\gamma$ be a path in $\BR^\times$  from $z_1$ to $z_2,$ then we have the following commuting diagram
 \begin{equation*}
\begin{tikzcd}
F(X)\boxtimes_{P(z_1)} F(Y) \arrow[r, "J_{X,Y}"]  \arrow[d,"T_\gamma"]&F(X\otimes Y)   \arrow[d,"\id"]\\
F(X)\boxtimes_{P(z_2)}F(Y) \arrow[r, "J_{X,Y}"] &F(X\otimes Y)
\end{tikzcd}
\end{equation*}  
by noting that 
\begin{equation*}
\begin{split}
(\overline{J_{X,Y}}\circ \overline{T_{\gamma}})(f \boxtimes_{P(z_1)}g)(v\otimes u)&=\overline{J_{X,Y}}I_{\boxtimes_{P(z_2)}}(f,e^{l(z_1)})g(v\otimes u)\\
&=\overline{J_{X,Y}}I_{\boxtimes_{P(z_2)}}(f,z_1)g(v\otimes u)\\
&=Y(fu,z_1)gv
\end{split}
\end{equation*}
where we have used the fact that $I_{\boxtimes_{P(z_2)}}$ only involves with integral powers of $z.$

 Let $z_1>z_2>z_1-z_2>0$ and $\gamma_i$ be as before for $i=1,2,3.4.$ So it is good enough to show the following diagram is commutative:
\begin{equation*}
\begin{tikzcd}
F(X)\boxtimes (F(Y) \boxtimes F(Z))  \arrow[r,"\id\boxtimes J_{Y,Z}"]  \arrow[d, "(\id\boxtimes_{ P(z_1) }T_{\gamma_2} )\circ T_{\gamma_1}"] &F(X)\boxtimes  F(Y\otimes Z) \arrow[r,"J_{X, Y\otimes Z}"]  \arrow[d,"\\T_{\gamma_1}"]&F(X\otimes  (Y\otimes Z))  \arrow[d, "\id"] \\
F(X)\boxtimes_{P(z_1)} (F(Y) \boxtimes_{P(z_2)} F(Z)) \arrow[r,"\id\otimes J_{Y,Z}"]  \arrow[d, "A_{z_1,z_2}"] &F(X)\boxtimes_{P(z_1)}  F(Y\otimes Z) \arrow[r,"J_{X, Y\otimes Z}"] 
 &F(X\otimes  (Y\otimes Z))  \arrow[d, "F(a_{X,Y,Z})"] \\
 (F(X)\boxtimes_{P(z_1-z_2)} F(Y)) \boxtimes_{P(z_2)} F(Z)) \arrow[r,"J_{X,Y}\otimes \id"]  \arrow[d, "T_{\gamma_3} \circ (T_{\gamma_4} \boxtimes_{P(z_2)}\id )"] &F(X\otimes Y)\boxtimes_{P(z_2)}  F( Z) \arrow[r,"J_{X\otimes Y, Z}"] \arrow[d,"T_{\gamma_3}"]
 &F((X\otimes  Y)\otimes Z)  \arrow[d, "\id"] \\
 (F(X)\boxtimes F(Y))\boxtimes F(Z)  \arrow[r, "J_{X\otimes Y}\otimes \id"]  &F(X\otimes Y)\boxtimes F(Z)  \arrow[r, "J_{X\otimes Y,Z}"] &F((X\otimes Y)\otimes Z)
 \end{tikzcd}
\end{equation*} 
From the discussion above,  the sub-diagrams involving the first two rows and the last two rows are commutative.

Now, we discuss the commutativity of the sub-diagram involving the second and third rows. Let $f\in F(X), g\in F(Y), h\in F(Z)$ and $u\in X^*,v\in Y^*, w\in Z^*.$  Then 
\begin{equation*}
\begin{split}
& \ \ \ (\overline{F(a_{X,Y,Z})}\circ\overline{J_{X,Y\otimes Z}}\circ \overline{\id\otimes J_{Y,Z}} )(f\boxtimes_{P(z_1)}(g\boxtimes_{P(z_2)}h))(w\otimes(v\otimes u))\\
&=Y(fu,z_1)Y(gv,z_2)hw, \quad \text{and}
\end{split}
\end{equation*}
\begin{equation*}
\begin{split}
& \ \ \ (\overline{J_{X\otimes Y, Z}}\circ \overline{J_{X,Y}\otimes\id} \circ \overline{A_{z_1,z_2}})(f\boxtimes_{P(z_1)}(g\boxtimes_{P(z_2)}h)(w\otimes(v\otimes u))\\
&=(\overline{J_{X\otimes Y, Z}}\circ \overline{J_{X,Y}\otimes\id})((f\boxtimes_{P(z_1-z_2)} g)\boxtimes_{P(z_2)}h)(w\otimes(v\otimes u))\\
&=Y(Y(fu,z_1-z_2)gv,z_2)hw.
\end{split}
\end{equation*}
That is the commutativity of  the diagram 
\begin{equation*}
\begin{tikzcd}
F(X)\boxtimes_{P(z_1)} (F(Y) \boxtimes_{P(z_2)} F(Z)) \arrow[r,"\id\otimes J_{Y,Z}"]  \arrow[d, "A_{z_1,z_2}"] &F(X)\boxtimes_{P(z_1)}  F(Y\otimes Z) \arrow[r,"J_{X, Y\otimes Z}"] 
 &F(X\otimes  (Y\otimes Z))  \arrow[d, "F(a_{X,Y,Z})"] \\
 (F(X)\boxtimes_{P(z_1-z_2)} F(Y)) \boxtimes_{P(z_2)} F(Z)) \arrow[r,"J_{X,Y}\otimes \id"]  &F(X\otimes Y)\boxtimes_{P(z_2)}  F( Z) \arrow[r,"J_{X\otimes Y, Z}"] &F((X\otimes  Y)\otimes Z) \,.
\end{tikzcd}
\end{equation*} 

Finally  we prove that $F$ is an  equivalence. It is clear that $F$ is injective on morphisms. Since $F(W_\lambda) \cong V_{\lambda^*}$ for any irreducible character $\lambda$, $F$ is essentially surjective and  $\dim_\C \Hom_G(X,Y) = \dim_\C \Hom_{V^G}(F(X), F(Y))$. Therefore, $F$ is bijective on morphism spaces, and  $F$ is an equivalence.  
\end{proof}

From Theorem \ref{theorem2}, $F^{V,G} : \Rep(G) \to \CC_{V^G}$ is an embedding for any vertex operator algebras $V$ satisfying the assumptions  (V1)-(V3), its image is equivalent to $\EE_{V^G}$.   We may simply denote $\Rep(G)$ by $\EE$ in the sequel, and the pair $(\CC_{V^G}, F^{V,G})$ defines a braided $\EE$-category. 

\begin{thm}\label{t5.2}  With the assumptions (V1)-(V3), we have
\begin{enumerate}
    \item[\rm (1)] 
 $\FPdim (\FF_{V^G})=o(G) \cdot \FPdim (\CC_V)$ and $\FPdim (\CC_{V^G})=o(G) \cdot \FPdim (\FF_{V^G}).$

 \item[\rm (2)]  $\FF_{V^G}$ is a braided fusion category.

 \item[\rm (3)]   $\FF_{V^G}'=\EE_{V^G}.$ That is the Mu\"ger center of $\FF_{V^G}$ is the symmetric fusion category $\EE_{V^G}$, and $(\FF_{V^G}, F^{V,G})$ is a nondegenerate braided $\EE$-category.
 \item[\rm (4)]  $\CC_{V^G}$ is a minimal modular extension of $\FF_{V^G}$. If $V$ is holomorphic, $\CC_{V^G}$ is a minimal modular extension of $\EE_{V^G}$ and is braided equivalent to ${\cal Z}(\Vec_G^{\alpha})$ for some $\alpha\in H^3(G,S^1).$

 \item[\rm (5)] 
 $\overline{\CC_{V^G}}$ is a minimal modular extension of $\overline{\FF_{V^G}}.$ If $V$ is holomorphic, $\overline{\CC_{V^G}}$ is a minimal modular extension of $\EE_{V^G}$ and is braided equivalent to ${\cal Z}(\Vec_G^{\bar{\alpha}})$ where $\bar\alpha=\alpha^{-1}.$
 \end{enumerate}
\end{thm}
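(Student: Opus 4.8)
The plan is to reduce all five parts to the single structural identity $\FF_{V^G}=C_{\CVG}(\EE_{V^G})$ together with Frobenius--Perron dimension bookkeeping, after which everything follows from the general machinery of Section 3 and from Theorem \ref{theorem2} and Theorem \ref{mthm1}.

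First I would settle part (1) by dimension counting. By Theorem \ref{mthm1}(2), $\FPdim(V_\lambda)=\dim W_\lambda$ for $\lambda\in\Irr(G)$, and since $V=\bigoplus_\lambda W_\lambda\otimes V_\lambda$ as a $V^G$-module, the object $V\in\CVG$ satisfies $\FPdim(V)=\sum_\lambda(\dim W_\lambda)^2=o(G)$. As $V$ is a condensable algebra in the modular category $\CVG$ with $\CC_V$ its category of local modules, the dimension formulas from \cite{DMNO} give $\FPdim(\CC_V)=\FPdim(\CVG)/\FPdim(V)^2$, hence $\FPdim(\CVG)=o(G)^2\,\FPdim(\CC_V)$. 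For $\FF_{V^G}$ I would run an orbit count with Theorem \ref{mthm1}: for a fixed untwisted representative $M$, $\sum_{\lambda\in\Lambda_M}\FPdim(M_\lambda)^2=\sum_\lambda(\dim W_\lambda)^2[G:G_M]^2\qdim_V(M)^2=o(G)[G:G_M]\qdim_V(M)^2$, using $\sum_\lambda(\dim W_\lambda)^2=o(G_M)$ and $[G:G_M]\,o(G_M)=o(G)$; summing over orbit representatives of untwisted irreducibles recombines to $o(G)\sum_{N\in\Or(\CC_V)}\qdim_V(N)^2=o(G)\,\FPdim(\CC_V)$. The two identities of (1) follow.

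The heart of the argument is the identity $\FF_{V^G}=C_{\CVG}(\EE_{V^G})$, which yields (2) and (3). Since $V=\bigoplus_\lambda(\dim W_\lambda)V_\lambda$ generates $\EE_{V^G}$, centralizing $\EE_{V^G}$ is the same as centralizing the object $V$. For the inclusion $\FF_{V^G}\subseteq C_{\CVG}(V)$ I would show that a $V^G$-summand $M_\mu$ of an untwisted module $M$ has trivial double braiding with each $V_\lambda\subset V$: the braiding and its inverse are read off, as in Section 4, from intertwining operators built out of $Y_M(\cdot,z)\colon V\to\End(M)[[z,z^{-1}]]$, which involves only integer powers of $z$ precisely because $M$ is untwisted, and integral powers force the monodromy factor to be trivial, so $c_{V_\lambda,M_\mu}\circ c_{M_\mu,V_\lambda}=\id$. (Equivalently, $M_\mu\in C_{\CVG}(V)$ iff the free module $\Ind(M_\mu)=V\boxtimes M_\mu$ is local, and this induced module decomposes into the untwisted modules in the $G$-orbit of $M$, all of which are local.) Having one inclusion, equation (\ref{3.1}) and $\CVG'=\Vec$ give $\FPdim(C_{\CVG}(\EE_{V^G}))=\FPdim(\CVG)/\FPdim(\EE_{V^G})=o(G)\,\FPdim(\CC_V)=\FPdim(\FF_{V^G})$ by (1), so the inclusion is an equality. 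A Müger centralizer is automatically a braided fusion subcategory, giving (2). For (3), the double-centralizer theorem for the modular category $\CVG$ gives $C_{\CVG}(\FF_{V^G})=\EE_{V^G}$; since $V$ is itself an untwisted module, $\EE_{V^G}\subseteq\FF_{V^G}$, whence $\FF_{V^G}'=\FF_{V^G}\cap C_{\CVG}(\FF_{V^G})=\EE_{V^G}$, and nondegeneracy over $\EE$ follows from Theorem \ref{theorem2}.

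Parts (4) and (5) are then formal. For (4), $\FF_{V^G}$ is nondegenerate over $\EE$ by (3), $\CVG$ is modular and contains it, and the minimality condition $C_{\CVG}(\EE_{V^G})=\FF_{V^G}$---equivalently the dimension identity of Lemma \ref{l3.1}---holds by the identity above and (1). When $V$ is holomorphic, $\CC_V=\Vec$, so $\FPdim(\FF_{V^G})=o(G)=\FPdim(\EE_{V^G})$; with $\EE_{V^G}\subseteq\FF_{V^G}$ this forces $\FF_{V^G}=\EE_{V^G}$, so $\CVG$ is a minimal modular extension of $\EE_{V^G}\cong\EE$, which the classification of $\MM(\EE)$ (Theorem \ref{LKW1} and the discussion of $\Phi_G$) identifies with $\ZZ(\Vec_G^\alpha)$ for some $\alpha\in H^3(G,S^1)$. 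For (5) I would use that passing to $\overline{\CC}$ preserves Müger centralizers (trivial monodromy is insensitive to reversing the braiding) and fixes the symmetric category $\EE_{V^G}$; hence $C_{\overline{\CVG}}(\EE_{V^G})=\overline{\FF_{V^G}}$, whose Müger center is again $\EE_{V^G}$, so $\overline{\CVG}$ is a minimal modular extension of $\overline{\FF_{V^G}}$ with the dimensions of (4). In the holomorphic case $\overline{\FF_{V^G}}=\EE_{V^G}$, and since the inverse of $[\CC]\in\MM(\EE)$ is $[\overline{\CC}]$ while $\Phi_G\colon\alpha\mapsto[\ZZ(\Vec_G^\alpha)]$ is a group isomorphism, $\overline{\CVG}\cong\overline{\ZZ(\Vec_G^\alpha)}\cong\ZZ(\Vec_G^{\alpha^{-1}})=\ZZ(\Vec_G^{\bar\alpha})$. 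The \emph{main obstacle} is the inclusion $\FF_{V^G}\subseteq C_{\CVG}(\EE_{V^G})$: everything else is dimension counting or a citation to Section 3. Making the ``integer powers $\Rightarrow$ trivial monodromy'' step rigorous requires carefully matching the $V^G$-intertwining operators defining the braiding in $\CVG$ with the genuine vertex operator $Y_M$, and it is exactly here that $M$ being untwisted rather than $g$-twisted for $g\neq 1$ is used decisively; I expect the free-module/local-module reformulation to be the more robust route.
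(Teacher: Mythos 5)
Your proposal is correct, provided you run it through your ``free-module/local-module'' reformulation rather than the monodromy heuristic, and it genuinely differs from the paper in its central mechanism. The shared part: your (1) is the paper's computation almost verbatim (except that you rederive $\FPdim(\CVG)=o(G)^2\,\FPdim(\CC_V)$ from the condensable algebra $V$ and the local-module dimension formula of \cite{DMNO}, where the paper simply cites \cite{DRX}), and your (4), (5) are the same formal deductions from Lemma \ref{l3.1}, Theorem \ref{LKW1} and the group structure of $\MM(\EE)$. The divergence: the paper never establishes $\FF_{V^G}=C_{\CVG}(\EE_{V^G})$ as the first step. It first proves closure of $\FF_{V^G}$ under $\boxtimes$ directly with the induction functor (its part (2)), and then shows each $V_\lambda$ centralizes each $M_\mu$ by a ribbon argument: using \eqref{eq:iso} it decomposes $M_\mu\boxtimes V_\lambda$ into irreducible $V^G$-submodules of $M$, so that $\theta_{M_\mu\boxtimes V_\lambda}=\theta_{M_\mu}=\theta_M$ as scalars, and the balancing equation together with $\theta_{V_\lambda}=1$ forces $c_{V_\lambda,M_\mu}\circ c_{M_\mu,V_\lambda}=\id$; part (3) then follows from the dimension count (\ref{3.1}). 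You instead extract the centralizer identity $\FF_{V^G}=C_{\CVG}(\EE_{V^G})$ first, from ``$\Ind_{V^G}^V(M_\mu)$ is untwisted, hence local, hence $M_\mu$ centralizes $V$,'' which buys you (2) for free (the paper itself notes in Section 3 that M\"uger centralizers are braided fusion subcategories) and reduces (3) to M\"uger's double-centralizer theorem \cite{Mu1}. The cost is the converse implication ``induced module local $\Rightarrow$ trivial monodromy with $V$,'' which the paper never needs and which you must prove or cite: the proof is short (compose the locality equation for $V\boxtimes_{V^G}M_\mu$ with the unit map $V^G\to V$ in the algebra slot and use naturality of the braiding to extract $c_{M_\mu,V}\circ c_{V,M_\mu}=\id$), but it is a genuine step, not a tautology. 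Both routes rest on the same VOA input, namely the adjunction for $\Ind_{V^G}^V$ and Theorem \ref{mthm1}, which rule out twisted constituents in induced modules.

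One warning: your first-choice argument --- reading the braiding off $Y_M(\cdot,z)$ and claiming that integral powers of $z$ kill the monodromy --- is not a proof as it stands, precisely because the braiding of $\CVG$ is defined through $V^G$-intertwining operators, not through the $V$-module vertex operator $Y_M$; you flagged this yourself. The paper's twist/balancing computation is one rigorous replacement for that intuition, and your locality reformulation is another; promote the latter from fallback to the actual argument.
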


\begin{proof} (1) Let $J_0$ be the orbit representatives consisting of irreducible $V$-modules. Then by Theorem \ref{mthm1},
$$ \FPdim (\FF_{V^G})=\sum_{M\in J_0}\sum_{\lambda \in \Lambda_{M}}\qdim_{V^G} (M_{\lambda})^2.$$
and
 $$\qdim_{V^G} (M_{\lambda})=[G:G_{M}]\dim (W_{\lambda}) \cdot \qdim_{V} (M),$$
 $$o(G_M)=\sum_{\lambda \in \Lambda_{M}} \dim (W_{\lambda})^2.$$
Thus,
\begin{align*}
 \FPdim (\FF_{V^G})
=&\sum_{M\in J_0}\sum_{\lambda \in \Lambda_{M}}
[G:G_{M}]^2 \dim (W_{\lambda})^2\cdot\qdim_{V} (M)^2\\
=&\sum_{M\in J_0}[G:G_M]^2 o(G_M)\qdim_{V} (M)^2\\
=&o(G) \sum_{M\in J_0}[G:G_M] \qdim_{V} (M)^2\\
=&o(G)\sum_{M\in J_0}\sum_{N\in G\cdot M}\qdim_{V} (N)^2\\
=&o(G)\cdot \FPdim (\CC_V).
\end{align*}
The identity $\FPdim (\CC_{V^G})=o(G)\cdot \FPdim (\FF_{V^G})$ follows from $\FPdim (\CC_{V^G})=o(G)^2\cdot \FPdim (\CC_V)$ \cite{DRX}.

(2) Since  $\FF_{V^G}$ is a subcategory of the modular tensor category $\CC_{V^G},$ it suffices to show that for any $X,Y$ in $\FF_{V^G},$
$X\boxtimes_{V^G} Y$ is also in  $\FF_{V^G}.$

Recall the fusion category   $\Rep(V)=\oplus_{g\in G}\Rep(V)_g$ from Section 4. There is an induction functor
$$\Ind_{V^G}^V: \CC_{V^G}\to \Rep(V)$$
such that $\Ind_{V^G}^V(X)=V\boxtimes_{V^G} X$ for any object $X$ in $ \CC_{V^G}$. It follows from  \cite{KO} that $\Ind_{V^G}^V$ is a tensor functor, and it has a right adjoint  $\Res^{V}_{V^G}: \Rep(V)  \to \CC_{V^G}$, which is  the restriction functor. In particular,  the following holds:

(i)  $\Hom_V(\Ind_{V^G}^V(X), M)$ and $\Hom_{V^G}(X, \Res^{V}_{V^G}(M))$ are naturally isomorphic for any $V^G$-module $X$ and $M\in \Rep(V),$ and 

(ii) $\Ind_{V^G}^V(W_1\boxtimes_{V^G} W_2)$ and $\Ind_{V^G}^V(W_1)\boxtimes_{\Rep(V)} \Ind_{V^G}^V(W_2)$ are naturally isomorphic for any $V^G$-modules $W_1, W_2$.

If $M$ is an irreducible $g$-twisted $V$-module and $\lambda\in\Lambda_{M}$ we claim that 
\begin{equation}\label{eq:iso}
    \Ind_{V^G}^V(M_{\lambda}) \cong \bigoplus_{N\in M\circ G}W_{\lambda}\otimes N=\bigoplus_{i=1}^{[G:G_M]} W_{\lambda}\otimes M\circ g_i
\end{equation}
as $V$-modules, where $\{g_1, \dots, g_{[G:G_M]}\}$ is a set of representatives of the right cosets of $G_M$ in $G$. Noting from Theorem \ref{mthm1}, for any irreducible twisted $V$-module $N$, we have $\Hom_{V^G}(M_{\lambda},N)=0$ if $N \not\in M \circ G$, 
     and $\Hom_{V^G}(M_{\lambda},N)\cong W_{\lambda}$ if $N \in M\circ G$. Using (i)
     immediately concludes the isomorphism in \eqref{eq:iso}.

Let $M,N$ be two irreducible $V$-modules and $\lambda\in \Lambda_M, \mu\in \Lambda_N$ we claim that  $M_{\lambda}\boxtimes_{V^G}N_{\mu}$ lies in
$\FF_{V^G}.$ First, for any $\lambda \in \Lambda_M$ and irreducible $g$-twisted $V$-module $X$ with $g \ne 1$, 
$$
\Hom_{\Rep(V)}(\Ind_{V^G}^V M_\lambda, X) \cong \Hom_{V^G}(M_\lambda, \Res^V_{V^G}X) = 0
$$
by (i) and Theorem \ref{mthm1}. Therefore, $\Ind_{V^G}^V M_\lambda \in \CC_V$. By (ii), for any $\mu \in \Lambda_N$, we have
$$
\Ind_{V^G}^V(M_{\lambda}\boxtimes_{V^G}N_{\mu}) \cong 
\Ind_{V^G}^V (M_{\lambda})\boxtimes_{\Rep(V)}\Ind_{V^G}^V(N_{\mu}) \in\CC_{V}.
$$
It follows from (i) that $\Hom_{V^G}(M_{\lambda}\boxtimes_{V^G}N_{\mu}, X)=0$ for any $g$-twisted $V$-module $X$ and $1\ne g\in G.$ This implies that $M_{\lambda}\boxtimes_{V^G}N_{\mu}$ lies in $\FF_{V^G}.$ Thus $\FF_{V^G}$ is a fusion subcategory of $\CC_{V^G}$.

(3)  We first prove that for any $\lambda\in \irr(G),$ $V_{\lambda}$ lies in $\FF_{V^G}'.$ and hence $\FPdim(\FF_{V^G}')\ge o(G).$ Equivalently, we need to show that
$$c_{V_{\lambda},M_{\mu}}\circ c_{M_{\mu},V_{\lambda}}=\id_{M_{\mu}\boxtimes V_{\lambda}}$$
for any irreducible $V$-module $M,$ $\lambda\in \irr(G)$ and $\mu\in \Lambda_M.$ It follows from \eqref{eq:iso} that 
$$
\Ind_{V^G}^V(V_\lambda) \cong W_\lambda \ot V 
$$
as $V$-modules. Therefore,
\begin{align*}
    \Ind_{V^G}^V(M_\mu \boxtimes_{V^G} V_\lambda) &  \cong \Ind_{V^G}^V(M_\mu) \boxtimes_{\Rep(V)} \Ind_{V^G}^V(V_\lambda) \\ 
    & \cong W_\lambda \ot \Ind_{V^G}^V(M_\mu)  \cong W_\lambda\otimes W_{\mu} \ot \bigoplus_{i=1}^{[G:G_M]}  M\circ g_i
\end{align*}
By (i) and Theorem \ref{mthm1}, we find  $M_{\mu}\boxtimes V_{\lambda}$ is isomorphic 
to a sum of some irreducible $V^G$-submodules of $M$ as $M\circ g_i$ and $M$ are isomorphic $V^G$-modules.
This implies that $\theta_M=\theta_{M_{\mu}}=\theta_{M_\mu\boxtimes V_{\lambda}}$ as complex numbers. Using the fact that $\theta_{V_{\l}}=1$ and relation 
$$\theta_{M_\mu\boxtimes V_\lambda}=c_{V_{\l}, M_{\mu}}\circ c_{M_{\mu},V_{\l}}\circ (\theta_{M_{\mu}}\boxtimes \theta_{V_{\l}}),$$ 
we conclude $c_{V_{\lambda},M_{\mu}}\circ c_{M_{\mu},V_{\lambda}}=\id_{M_{\mu}\boxtimes V_{\lambda}}.$

 As  ${\cal C}_{V^G}$ is modular, it follows from (\ref{3.1}) that
$$\FPdim (\CC_{V^G})=\FPdim(\FF_{V^G})\cdot \FPdim(C_{\CC_{V^G}}(\FF_{V^G})).$$
From (1) we know that 
$$\FPdim (\CC_{V^G})=\FPdim(\FF_{V^G})\cdot o(G)$$
which forces $\FPdim(C_{\CC_{V^G}}(\FF_{V^G}))=o(G).$ The fact that  $\EE_{V^G}$ is a full subcategory of $C_{\CC_{V^G}}(\FF_{V^G})$ and they have the same dimension
immediately implies that 
$$\FF_{V^G}'=C_{\CC_{V^G}}(\FF_{V^G})=\EE_{V^G}.$$ 

(4) By (1)-(3) and Lemma \ref{l3.1}, $\CC_{V^G}$ is a minimal modular extension of $\FF_{V^G}.$ If $V$ is holomorphic, then $\FF_{V^G} = \EE_{V^G}$ and  the statement follows from \cite[Thm. 4.22]{LKW1}. 

(5) By Theorem \ref{theorem2}, $\overline{\EE_{V^G}}=\EE_{V^G}$ is a subcategory of $\overline{\FF_{V^G}}.$ It follows from (1)-(4) that  $\overline{\CC_{V^G}}$ is a minimal modular extension of  $\overline{\FF_{V^G}}.$ If $V$ is holomorphic, then $\overline{\FF_{V^G}} = \EE_{V^G}$ and the statement follows from \cite[Thm. 4.22]{LKW1}. 
\end{proof}

Note that $\C[G]^*$ is a regular algebra in $\Rep(G)$, which is the dual $G$-module of $\C[G]$, and is a commutative associative algebra over $\C$ with a basis $\{e_a \mid a \in G\}$ of complete orthogonal idempotents given by $e_a(b)=\delta_{a,b}$. It is easy to see that $a \cdot e_b = e_{ab}$, and the product $\mu: \C[G]^*\otimes \C[G]^*\to \C[G]^*$ defined by $e_ae_b=\delta_{a,b}e_a$ is a $G$-module homomorphism. The unit map of this algebra is given by $i_{\C[G]^*}: \underline{1} \to \C[G]^*, 1 \mapsto 1_{\C[G]^*}=\sum_{a \in G} e_a$. Now, we can prove the following result:

\begin{thm}\label{t5.3} Let $V$ and $G$ be as before. Then
\begin{enumerate}
    \item[\rm (1)] 
 $F^{V,G}(\C[G]^*)$ is an algebra isomorphic to $V$ in category
$\EE_{V^G}.$

 \item[\rm (2)] 
 For any subgroup $H$ of $G$, $F^{V,G}(\C[G/H]^*)$ is a subalgebra of $F^{V,G}(\C[G]^*)$ isomorphic to $V^H$ in category $\EE_{V^G}.$
 \end{enumerate}
\end{thm}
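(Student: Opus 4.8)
The plan is to verify both statements at the level of algebras by transporting the regular algebra structure along the braided tensor equivalence $F=F^{V,G}$ of Theorem \ref{theorem2}. Recall that $F(X)=\Hom_G(X^*,V)$, so $F(\C[G]^*)=\Hom_G(\C[G],V)$ under $(\C[G]^*)^*\cong\C[G]$. As $\C[G]$ is the left regular module, evaluation at the identity $\Phi\colon\Hom_G(\C[G],V)\to V$, $f\mapsto f(1_G)$, is a linear isomorphism with inverse $v\mapsto(h\mapsto hv)$; from the definition of the $V^G$-action $(Y(a,x)f)(u)=Y(a,x)f(u)$ it is in fact a $V^G$-module isomorphism onto $V$ with its tautological module structure. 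By \cite{HKL}, $V$ is a connected commutative algebra in $\CC_{V^G}$ whose product $m_V\colon V\boxtimes V\to V$ is induced by the vertex operator, $\overline{m_V}(a\boxtimes b)=Y(a,z)b$, and whose unit is the inclusion $\iota\colon V^G\hookrightarrow V$. Since $F$ is a tensor functor, $F(\C[G]^*)$ carries the transported algebra structure with product $F(\mu)\circ J_{\C[G]^*,\C[G]^*}$ and unit $F(i)\circ\e$, so it suffices to show that $\Phi$ intertwines these data with $m_V$ and $\iota$.

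For the product I would first dualize $\mu$. Writing $\{g\}$ for the basis of $\C[G]$ dual to $\{e_g\}$, the relation $\mu(e_a\otimes e_b)=\delta_{a,b}e_a$ gives, for the adjoint map $\mu'\colon\C[G]\to\C[G]\otimes\C[G]$, the formula $\mu'(h)=h\otimes h$. Using the characterization $\overline{J_{X,Y}}(f\boxtimes g)(v\otimes u)=Y(f(u),z)g(v)$ from Theorem \ref{theorem2} together with $f(h)=h\,\Phi(f)$, one computes for $h\in G$
$$
\overline{\big(F(\mu)\circ J\big)(f\boxtimes g)}(h)=\overline{J(f\boxtimes g)}\big(\mu'(h)\big)=Y\big(f(h),z\big)g(h)=Y\big(h\,\Phi(f),z\big)\,h\,\Phi(g).
$$
Applying $\Phi$ (i.e.\ setting $h=1_G$) yields $Y(\Phi(f),z)\Phi(g)=\overline{m_V(\Phi(f)\boxtimes\Phi(g))}$, so $\Phi$ is multiplicative. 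For the unit, dualizing $i\colon\u1\to\C[G]^*$, $1\mapsto\sum_a e_a$, gives the augmentation $i'\colon\C[G]\to\C$, $h\mapsto1$; hence $F(i)(\e(a))$ is the constant $G$-map $h\mapsto a$, which is well defined precisely because $a\in V^G$, and $\Phi(F(i)(\e(a)))=a$. Thus $\Phi$ sends the unit of $F(\C[G]^*)$ to $\iota$, establishing (1).

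For (2), pullback of functions along the quotient $G\to G/H$ realizes $\C[G/H]^*$ as the subalgebra of right-$H$-invariant functions in $\C[G]^*$; equivalently, the inclusion $j\colon\C[G/H]^*\hookrightarrow\C[G]^*$ is dual to the coalgebra surjection $\C[G]\twoheadrightarrow\C[G/H]$, $g\mapsto gH$, and is a morphism of algebras in $\Rep(G)$. Applying $F$, the map $F(j)\colon F(\C[G/H]^*)\to F(\C[G]^*)$ is a morphism of algebras, so its image is a subalgebra. It remains to identify $F(\C[G/H]^*)$ with $V^H$: since $\C[G/H]=\Ind_H^G\u1$, Frobenius reciprocity gives $F(\C[G/H]^*)=\Hom_G(\C[G/H],V)\cong V^H$, $\phi\mapsto\phi(\bar 1)$ where $\bar 1=1_GH$, compatibly with the $V^G$-actions. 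Because $j'$ is the quotient map $g\mapsto gH$, one has $\Phi(F(j)(\phi))=F(j)(\phi)(1_G)=\phi(\bar 1)$, so the square relating $F(j)$ to $V^H\hookrightarrow V$ commutes. Hence $F(\C[G/H]^*)\cong V^H$ as a subalgebra of $F(\C[G]^*)\cong V$, namely the fixed-point sub-VOA $V^H\subseteq V$.

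The main obstacle is the product computation in (1): it forces one to unwind the monoidal structure $J$ of $F$ from Theorem \ref{theorem2} simultaneously with the adjoint $\mu'$ of the regular-algebra multiplication and then match the outcome against the HKL description of the vertex-algebra product on $V$. Care is required with the formal completions hidden in the $\overline{(-)}$ notation and with the duality convention $(X\otimes Y)^*\cong Y^*\otimes X^*$ used to define $J$; once these are in place, the unit axiom, the $V^G$-linearity of $\Phi$, and the subalgebra identification in (2) are all routine.
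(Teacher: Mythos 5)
Your proposal is correct and follows essentially the same route as the paper: both transport the regular algebra structure of $\C[G]^*$ along the tensor functor $F$ using the monoidal structure $J$ and the adjoint comultiplication $\mu'(h)=h\otimes h$, match the result against the HKL algebra product $Y(\cdot,1)\cdot$ on $V$, and handle (2) via the algebra inclusion $\C[G/H]^*\hookrightarrow\C[G]^*$ together with the identification $\Hom_G(\C[G/H],V)\cong V^H$. The only cosmetic differences are that you use the evaluation map $\Phi\colon f\mapsto f(1_G)$ (the inverse of the paper's $\phi\colon v\mapsto f_v$) and phrase the identification in (2) via Frobenius reciprocity rather than the paper's direct computation that $f_v$ factors through $G/H$ iff $v\in V^H$.
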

\begin{proof} 

(1) For short we set $F=F^{V,G}$ in this proof. We identify $\C[G]^{**}$ with $\C[G]$ in the usual way as $G$-modules, which means $b(e_a) = e_{a}(b) =\delta_{a,b}$. Since $\C[G]$ is a free $G$-module generated by $1$,  for any $G$-module $W$,  we have the natural isomorphism  of vector spaces
$\Hom_G(\C[G], W) \cong W$, which implies
$\Hom_G(\C[G], W) = \{f_w \mid w \in W\}$ where $f_w(a) =aw$ for $a \in G$. In particular, we have $F(\C[G]^*)=\{f_v \mid v\in V\}.$ 

Let $U_n=\Hom(\C[G], V_n) = \{f_v\mid v\in V_n\}$ for $n\geq 0.$ 
We now  show that the algebra $U=\oplus_{n\geq 0}U_n=F(\C[G]^*)$ with product map $\mu_U= F(\mu)\circ J_{\C[G]^*, \C[G]^*}: U\boxtimes U\to U$ and unit map $i_U:=F(i_{\C[G]^*})\circ \e: V^G \to U$ is isomorphic to $V$ in $\EE_{V^G}.$ Note that the adjoint map $\mu': \C[G]\to \C[G]\otimes \C[G]$ of $\mu$ is determined  by $\mu'(a)=a \otimes a$ for any $a \in G$. Thus $F(\mu): F(\C[G]^*\otimes \C[G]^*)\to F(\C[G]^*)$  is given by $F(\mu)(f)=f\mu'$
for $f\in F(\C[G]^*\otimes \C[G]^*)$.  It follows from the braided tensor equivalence $F$ that $i_U:= F(i_{\C[G]^*}) \circ \e$ is the unit map of the algebra $U$ in $\EE_{V^G}$. For any $u, v\in V,$ and $a\in G$,
\begin{align*}
\mu_U (f_u\boxtimes f_v)(a) & =(F(\mu)\circ J_{\C[G]^*, \C[G]^*})(f_u\boxtimes f_v)(a)\\
& =Y(au,1)av=aY(u,1)v=f_{Y(u,1)v}(a) 
\end{align*}
where $f_{Y(u,1)v}$ is understood to be $\sum_{n\in\Z}f_{u_nv}.$
Therefore, $\mu_U(f_u\boxtimes f_v)=f_{Y(u,1)v}$. 
Recall from \cite{HKL} that $V$ is also algebra in $\EE_{V^G}$ with the algebra product map
$$\mu_V(u\boxtimes v)=Y(u,1)v.$$
One can define the $\C$-linear isomorphism $\phi: v\mapsto f_v$ for $v\in V$ from $V$ to $U$. Then $\phi$ is a $V^G$-module map by the definition of $U$ which satisfies $\mu_U \circ (\phi \boxtimes \phi) = \phi \circ  \mu_V$ and $i_U =\phi\circ i_V$, where unit map $i_V: V^G \to V$ of $V$ is the inclusion map.
In particular, $U$ is a vertex operator algebra isomorphic to $V.$ In fact, this can be seen directly that the vertex operator algebra structure on $U$ is given by  $Y(f_u,x)f_v=f_{Y(u,x)v}=\sum_{n\in\Z}f_{u_nv}x^{-n-1}$
for $u,v\in V.$ Since $F$ is a braided tensor equivalence, $U=F(\C[G]^*)$ is a regular algebra of $\EE_{V^G}$ isomorphic to $V$.

(2) For any subgroup $H$ of $G$,  $\iota: \C[G/H]^* \to \C[G]^*, e_{aH} \mapsto \sum_{h \in H} e_{ah}$ is an algebra embedding in $\EE$, where $e_{aH}(bH) = \delta_{aH, bH}$. Therefore, $F(\C[G/H]^*) \xrightarrow{F(\iota)}F(\C[G]^*)$ is an algebra embedding in $\EE_{V^G}$.
We also identify $\C[G/H]^{**}$ with $\C[G/H]$ as $G$-modules.
From (1) we see  that 
$$F(\C[G/H]^*)=\Hom_G(\C[G/H],V)=\{f_v|v\in V, f_v(ah)=f_v(a)\   \forall a\in G,  h\in H\}.$$
So $f_v\in F(\C[G/H]^*)$ if and only if $ahv=av$ for any $a\in G, h\in H.$ This forces $v\in V^H$, and hence $F(\C[G/H]^*)=\{f_v|v\in V^H\}.$ Recall from (1) that $\phi(v)=f_v$ for $v\in V$ is an algebra isomorphism from $V$ to $F(\C[G]^*).$ It is clear now that
the restriction of $\phi$ to $V^H$ gives an algebra 
isomorphism from $V^H$ to $F(\C[G/H]^*),$ as desired.
\end{proof}

\begin{rem} Theorem \ref{t5.3} gives a categorical interpretation of the Galois correspondence given in \cite{DM2}, \cite{HMT} that there is a bijection between the subgroups $H$ of $G$ and vertex operator subalgebras of $V$ containing $V^G$ by sending $H$ to $V^H.$
Combining with a result in \cite{HKL} we know that  the condensable algebras in $\EE_{V^G}$ are exactly $V^H$ for subgroups $H$ of $G.$ On the other hand, the condensable algebras in $\Rep(G)$ are given by $\C[G/H]^*$ for subgroups $H$ of $G$ \cite{KO}. It is easy  from Theorem \ref{t5.3} to see that $F(\C[G/H]^*)$ is isomorphic to $V^H.$ 
\end{rem} 

\section{The group $\MM_v(\EE)$ and $\MM_v(\EE)$-sets}

It has already been known from \cite{LKW1,LKW2}  that $\MM(\EE)$ is an abelian group.  Our goal is to understand this group structure from the point of view of vertex operator algebra. 

We need more notations and results on the braided fusion category.  Let $\CC$ and $\DD$ be braided fusion categories. We denote by $\CC\otimes \DD$ the Deligne tensor product of $\CC$ and $\DD$. Then $L_\CC=\oplus_{X\in {\cal O}(\CC)}X\otimes X^*$ is a contestable algebra in $\CC\otimes \overline{\CC}$  \cite{DMNO}. We also need a fact from \cite{KO} that the right adjoint of the tensor functor $\EE\otimes \EE \xrightarrow{\ot} \EE$ defines a braided tensor equivalence $R: \EE \to \EE \ot_\EE \EE = (\EE \ot \EE)_{L_\EE}$ and $R(\1)\cong L_\EE$ as algebras in $\EE \ot \EE$. Now let $(\CC,\iota_{\CC})$ and $(\DD,\iota_{\DD})$ be braided $\EE$-categories with embeddings $\iota_\CC: \EE\to \CC$ and $\iota_\DD: \EE\to \DD.$ 
Then 
 $$\iota_\CC\otimes_\EE\iota_\DD:  \EE \xrightarrow{R} (\EE\otimes \EE)_{L_\EE}\xrightarrow{\iota_\CC \ot \iota_\DD} (\CC\otimes \DD)_{A}^0$$
defines an embedding of $\EE$ into $(\CC\otimes \DD)_{A}^0$, where $A =(\iota_\CC \ot \iota_\DD)R(\1) \cong (\iota_\CC \ot \iota_\DD)(L_\EE)$. Following \cite{DNO}, \cite{LKW1} one can define the tensor product of braided $\EE$-categories as
\begin{equation} \label{eq:product}
    \CC\otimes_{\EE}^{(\iota_\CC,\iota_\DD)}\DD:=((\CC\otimes \DD)_{A}^0, \iota_\CC\otimes_\EE\iota_\DD)\,.
\end{equation}

Let $G$ be a finite group and  $V$ a vertex operator algebras satisfying conditions (V1)-(V3) such that $G$ acts faithfully on $V$ as automorphisms of $V$.  
We say that two such vertex operator algebras $V_1, V_2$ are \emph{equivalent} if there exists an isomorphism $\s : V_1 \to V_2$ of vertex operator algebras which commutes with their $G$-actions.  In this case, it is easy to see that  $V_1^{G}$ and $V_2^{G}$ are isomorphic and their module categories are braided equivalent.
So the number of inequivalent  faithful $G$-action on $V$ as automorphisms  is bounded by the cardinality of the conjugacy classes of $G$ in $\Aut(V).$ For example, there are two inequivalent faithful $\Z_2$ actions on the Moonshine vertex operator algebra $V^{\natural}$ \cite{FLM}. 

We denote by ${\bf R}_G$ for the the collection of vertex operators algebras satisfying conditions (V1)-(V3) with  $G$ acting faithfully as automorphisms. The subcollection of ${\bf R}_G$ consisting of holomorphic vertex operators algebras is denoted by ${\bf H}_G$.

The collection ${\bf H}_G$ of holomorphic vertex operators algebras could be generalized to non-holomorphic ones as follows: Fix a nondegenerate pseudounitary braided $\EE$-category $\FF$. Let
${\bf R}_G^\FF$ be the collection of vertex operator algebras $V\in {\bf R}_G$ such that $\FF_{V^G}$ is braided equivalent to $\FF$. The underlying braided equivalence $j^{V, G}: \FF_{V^G} \to \FF$ induces an $\EE$-braided equivalence $j^{V, G}: (\FF_{V^G}, F^{V,G}) \to (\FF, j^{V, G} \circ F^{V,G}).$ 
In particular, if $\FF=\EE$,  ${\bf R}^{\EE}_G = {\bf H}_G$.   We will use the notation $[\CC_{V^G}]$ to denote the equivalence class of the braided $\EE$-category $(\CC_{V^G}, F^{V,G})$ for any $V \in {\bf R}_G^\FF$.
By Theorem \ref{t5.2}, $\CC_{V^G}$
is a minimal modular extension of $\EE$ and for $V\in {\bf H}_G$ and $\CC_{U^G}$ is a minimal  modular extension of $\FF$ for $U\in {\bf R}_G^\FF.$ 

Set $\MM_{v}(\EE)=\{[\CC_{V^G}] \mid V\in {\bf H}_G\}$, and 
$\MM_{v}(\FF)=\{[\CC_{U^G}] \mid U\in {\bf R}^\FF_G\}$. 
By Theorem \ref{t5.2}, we have the inclusions $\MM_{v}(\EE)\subseteq \MM(\EE)$ and $\MM_{v}(\FF)\subseteq \MM(\FF).$

\begin{thm}\label{t7.1} Fix a finite group $G$ and let $\EE=\Rep(G)$. Then:
\begin{enumerate}
    \item[\rm (1)] The product 
    $$[\CC_{V^G}]\cdot [\CC_{U^G}]=[\CC_{(V\otimes U)^G}]$$
for $U,V\in {\bf H}_G$ on $\MM_{v}(\EE)$ coincides with 
    the product of $\MM(\EE)$ given by \eqref{eq:product}, 
 and hence $\MM_{v}(\EE)$ is a subgroup of 
$\MM(\EE)$. 
Moreover, $\MM_{v}(\EE)$ is isomorphic to a subgroup of $H^3(G, S^1)$, and $\CC_{V^G}$ is braided equivalent to ${\cal Z}(\Vec_G^{\alpha})$
for some $\alpha\in H^3(G,S^1)$ if $V\in {\bf H}_G.$
 \item[\rm (2)] For any (pseudounitary) braided  $\EE$-category $\FF$,  if ${\bf R}_G^\FF$ is not an empty collection, then the product  $[\CC_{V^G}]\cdot [\CC_{W^G}]:=[\CC_{(V\otimes W)^G}]$
for $V\in{\bf H}_G$ and $W\in  {\bf R}_G^\FF$ defines a free action of $\MM_{v}(\EE)$ on $\MM_{v}(\FF)$ with at most 
$[\M(\EE):\M_v(\EE)]$ many orbits.
 \item[\rm (3)] The group $\MM_{v}(\EE)$ acts freely 
 on $\{[\CC_{W^G}]|W\in {\bf R}_G\}$ such that 
 $[\CC_{V^G}]\cdot [\CC_{W^G}]:=[\CC_{(V\otimes W)^G}].$

\end{enumerate}
\end{thm}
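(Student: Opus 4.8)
The plan is to reduce all three parts to the single categorical identification
\begin{equation*}
\CC_{V^G}\otimes_\EE \CC_{U^G}\ \cong\ (\CC_{(V\otimes U)^G},\, F^{V\otimes U,G})
\end{equation*}
of braided $\EE$-categories, valid for $V\in{\bf H}_G$ and $U\in{\bf R}_G$ with the diagonal $G\hookrightarrow G\times G$ action on $V\otimes U$, where the relative Deligne product on the left is the one defined in \eqref{eq:product}. Once this is in hand, parts (1)--(3) follow formally from Theorem \ref{t5.2} and the group/torsor structure of \cite{LKW1} recorded in Theorem \ref{LKW1}.

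To prove the identification, I would first use that the Deligne product of module categories is realized by the tensor product of vertex operator algebras, $\CC_{V^G}\otimes\CC_{U^G}\cong \CC_{V^G\otimes U^G}=\CC_{(V\otimes U)^{G\times G}}$, the last equality because $V^G\otimes U^G=(V\otimes U)^{G\times G}$ for the componentwise $G\times G$-action. Under $\Rep(G)\otimes\Rep(G)\cong\Rep(G\times G)$, the right adjoint $R$ of the tensor functor is induction along the diagonal, so $L_\EE=R(\1)\cong \C[(G\times G)/\Delta G]^*$, the condensable algebra of $\Rep(G\times G)$ attached to the diagonal subgroup $\Delta G$. Applying Theorem \ref{t5.3}(2) to the pair $(V\otimes U,\,G\times G)$ then identifies the canonical algebra $A=(F^{V,G}\otimes F^{U,G})(L_\EE)$ with the diagonal orbifold $(V\otimes U)^{\Delta G}=(V\otimes U)^G$, viewed as a condensable algebra in $\CC_{(V\otimes U)^{G\times G}}$. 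The final step is the vertex-operator-algebra extension dictionary of \cite{HKL,KO,CKM}: the category of local $A$-modules $(\CC_{(V\otimes U)^{G\times G}})_A^0$ is braided equivalent to the module category $\CC_{(V\otimes U)^G}$ of the extended vertex operator algebra, and under this equivalence the embedding $F^{V,G}\otimes_\EE F^{U,G}$ of \eqref{eq:product} is carried to the canonical embedding $F^{V\otimes U,G}$ of Theorem \ref{theorem2}.

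Granting the identification, part (1) follows: for $V,U\in{\bf H}_G$ the vertex operator algebra $V\otimes U$ is again holomorphic and carries a faithful diagonal $G$-action, so $[\CC_{(V\otimes U)^G}]\in\MM_v(\EE)$; thus $\MM_v(\EE)$ is a nonempty subset of the finite group $\MM(\EE)$ closed under its product, hence a subgroup, and via the isomorphism $\Phi_G$ of \cite{LKW1} it is isomorphic to a subgroup of $H^3(G,S^1)$, the realization $\CC_{V^G}\cong\ZZ(\Vec_G^{\alpha})$ being Theorem \ref{t5.2}(4). For parts (2) and (3), the same identification (now with $V\in{\bf H}_G$ and $W\in{\bf R}_G$) shows that the product $[\CC_{V^G}]\cdot[\CC_{W^G}]=[\CC_{(V\otimes W)^G}]$ is exactly the restriction of the $\MM(\EE)$-action on $\MM(\FF)$ from Theorem \ref{LKW1}; since tensoring with a holomorphic vertex operator algebra leaves $\FF_{W^G}$ unchanged, the action preserves each set $\MM_v(\FF)$, and the full set $\{[\CC_{W^G}]\mid W\in{\bf R}_G\}$ decomposes into these invariant pieces, so freeness is inherited piece by piece from the free (indeed free and transitive) $\MM(\EE)$-action. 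For the orbit count, freeness gives that each $\MM_v(\EE)$-orbit in $\MM_v(\FF)$ has size $|\MM_v(\EE)|$, whence the number of orbits is $|\MM_v(\FF)|/|\MM_v(\EE)|\le |\MM(\FF)|/|\MM_v(\EE)|=[\MM(\EE):\MM_v(\EE)]$, using $|\MM(\FF)|=|\MM(\EE)|$.

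The main obstacle I expect is the central identification itself, and within it the precise matching of structure rather than mere equivalence of underlying categories: identifying $(F^{V,G}\otimes F^{U,G})(L_\EE)$ with $(V\otimes U)^{\Delta G}$ \emph{as an algebra} (not just as an object), and verifying that the local-module equivalence $(\CC_{(V\otimes U)^{G\times G}})_A^0\cong\CC_{(V\otimes U)^G}$ intertwines the two embeddings of $\EE$. A secondary point to check is that $V\otimes U$ with the diagonal action again satisfies (V1)--(V3), so that $\CC_{(V\otimes U)^G}$ is genuinely a minimal modular extension and the product remains inside $\MM_v(\EE)$.
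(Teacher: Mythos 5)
Your plan follows essentially the same route as the paper's proof: everything is reduced to the braided $\EE$-category identification $\CC_{V^G}\ot_{\EE}^{(F^{V,G},F^{U,G})}\CC_{U^G}\cong(\CC_{(V\ot U)^G},F^{V\ot U,G})$ for $V\in{\bf H}_G$, $U\in{\bf R}_G$; this is proved by identifying $L_\EE$ with $\C[(G\times G)/G]^*$ for the diagonal subgroup, applying Theorem \ref{t5.3}(2) to the pair $(V\ot U,G\times G)$ to obtain an isomorphism of \emph{algebras} $(V\ot U)^G\cong A=(F^{V,G}\ot F^{U,G})(L_\EE)$ in $\CC_{V^G}\ot\CC_{U^G}$, and passing to local modules via \cite{HKL}; parts (1)--(3) are then formal consequences of \cite{LKW1}, and your orbit-counting argument is word for word the paper's.

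There is, however, one genuine gap, and it is exactly the step you flag as the main obstacle: the claim that the induced equivalence $\tilde\phi:(\CC_{V^G}\ot\CC_{U^G})_A^0\to\CC_{(V\ot U)^G}$ carries the embedding $F^{V,G}\ot_\EE F^{U,G}$ to $F^{V\ot U,G}$ is not contained in \cite{HKL}, \cite{KO} or \cite{CKM}. Those references give the braided equivalence of the underlying categories only, whereas equality of classes in $\MM(\EE)$ is equality of pairs $(\DD,\iota)$, so the embeddings must be matched by hand. This is where the paper spends most of its effort: it constructs an explicit natural isomorphism $\psi$ from Frobenius reciprocity, $\Hom_{G\times G}((\Ind_G^{G\times G}X)^*,V\ot U)\cong\Hom_G(X^*,V\ot U)$, and then verifies by direct computation with intertwining operators (the maps $J_{X,Y}$ of Theorem \ref{theorem2}, characterized by $(f\boxtimes g)\mapsto Y(fu,1)gv$) that $\psi$ is compatible with the monoidal structures $J^{V\ot U,G\times G}=J^{V,G}\ot J^{U,G}$ versus $J^{V\ot U,G}$ and with the unit maps, together with the identification $(\EE_{V^G}\ot\EE_{U^G})_{(V\ot U)^G}=\EE_{(V\ot U)^G}$ obtained by an $\FPdim$ count. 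Without some such verification your central identification is only an equivalence of modular tensor categories, which does not suffice to compute products in $\MM(\EE)$. A second, smaller omission: the subgroup claim in (1) requires $\MM_v(\EE)\ne\emptyset$, which the paper proves by embedding $G$ into a symmetric group $S_n$ acting on $W^{\otimes n}$ for an arbitrary holomorphic $W$; your proposal assumes ${\bf H}_G\ne\emptyset$ implicitly.
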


\begin{proof} (1) 
 Let $W$ be any holomorphic vertex operator algebra. Then there exists a positive integer $n$ such that $G$ can be realized as a subgroup of the symmetric group $S_n$. Thus $G$ is an automorphism group of holomorphic vertex operator algebra $V=W^{\otimes n}.$ That means $\CC_{V^G}$ lies in $\MM_{v}(\EE)$ and so  $\MM_{v}(\EE)$ is not empty. 
 
All the statements in the theorem is a consequence of the equality:  
$$
[\CC_{V^G}]\cdot [\CC_{U^G}]=[\CC_{(V\ot U)^G}]
$$
for any $V \in \H_G$ and $U \in {\bf R}_G$. It amounts to prove that 
\begin{equation}\label{7.1}
(\CC_{(V\otimes U)^G}, F^{{V\otimes U, G}})\cong \CC_{V^G}\otimes_\EE^{(F^{V,G},F^{U,G})} \CC_{U^G},
\end{equation}
as braided $\EE$-categories, which means we need to find a braided tensor equivalence $\tilde\phi : (\CC_{V^G} \ot \CC_{U^G})_A^0 \to \CC_{(V \ot U)^G}$ such that $\tilde\phi \circ (F^{V,G}\ot_\EE F^{U,G})\cong F^{V \ot U, G}$ as  tensor functors, where $A =(F^{V,G} \ot F^{U,G})(L_\EE)$.

To find such a braided tensor equivalence $\tilde\phi$,   we first show that $(V \ot U)^G$  and $A$   are isomorphic algebras in $\CC_{V^G} \ot \CC_{U^G}$ under an isomorphism $\phi$. This algebra isomorphism induces a (strict)  tensor equivalence $\tilde\phi : (\CC_{V^G} \ot \CC_{U^G})_A \to (\CC_{V^G} \ot \CC_{U^G})_{(V \ot U)^G}$ and hence a braided tensor equivalence $\tilde\phi: (\CC_{V^G} \ot \CC_{U^G})^0_A \to (\CC_{V^G} \ot \CC_{U^G})^0_{(V \ot U)^G}=\CC_{(V \ot U)^G}$. 

By Theorem \ref{t5.3}, there exists an algebra isomorphism  
$$
V \ot U \xrightarrow[\phi]{\sim} F^{{V\otimes U, G \times G}}(\C[G \times G]^*)
$$
in $\EE_{(V \ot U)^{G \times G}}$. Consider $G$ as the diagonal subgroup of $G \times G$. Then  $\C[(G \times G)/G]^*$ is a subalgebra of $\C[G \times G]^*$ in $\Rep(G \times G)$. It follows from Theorem \ref{t5.3} (2) that the restriction of $\phi$ defines an isomorphism 
$$
(V \ot U)^G \xrightarrow[\phi]{\sim} F^{{V\otimes U, G \times G}}(\C[(G \times G)/G]^*)
$$
in $\EE_{{(V \ot U)^{G \times G}}}$. Note that if one identifies $\Rep(G \times G)$ with 
$\EE \ot \EE$, then $\C[(G \times G)/G]^* = L_\EE$ and $F^{{V\otimes U, G \times G}} = F^{V,G} \ot F^{U,G}$. Therefore, $\phi$ defines an isomorphism of algebras in $\CC_{V^G} \ot \CC_{U^G}$ from  $(V\ot U)^G$ to $A.$ Since $A$ is an algebra in $\EE_{V^G} \ot \EE_{U^G}$, so is $(V\ot U)^G$.

Now, the algebra isomorphism $\phi$ in $\CC_{V^G} \ot \CC_{U^G}$ induces a  braided tensor equivalence  $\tilde \phi:  (\CC_{V^G}\otimes \CC_{U^G})_{A}^0 \to \CC_{(V\otimes U)^G}$, and $\phi: (V \ot U)^G \to \tilde \phi(A)$ is an isomorphism of $(U \ot V)^G$-modules, and  $(\tilde \phi \circ (F^{V,G} \ot_\EE F^{U,G}), J', \phi)$ is a braided tensor functor, where $J'= J^{V \ot U,G \times G}.$ To complete the proof of \eqref{7.1}, we need to show that $(\EE_{V^G}\otimes\EE_{U^G})_{(V\otimes U)^G} = \EE_{(V\otimes U)^G}$ and  the equivalence of the two braided tensor functors:
$$(\tilde \phi \circ (F^{V,G} \ot_\EE F^{U,G}), J', \phi) \cong (F^{{V\otimes U, G }}, J^{V\ot U, G}, \e).$$
where $J^{V \ot U, G}$ and $\e$ are the monoidal structure of the functor $F^{V \ot U, G}$ defined  in Theorem \ref{theorem2}.

% $$\e': (V\otimes U)^G\to \tilde\phi\circ (F^{V,G} \ot_\EE F^{U,G})(\underline{1})
% \quad\text{and}\quad
% J'= J^{V \ot U,G \times G}.$$
Note that 
the simple $(V \ot U)^G$-submodules $(V \ot U)_\lambda \in (\EE_{V^G} \ot \EE_{U^G})_{(V\otimes U)^G}$ for $\lambda \in \Irr(G)$. Since $\FPdim ((\EE_{V^G} \ot \EE_{U^G})_{(V\otimes U)^G}) = |G|$, we find $(\EE_{V^G}\otimes\EE_{U^G})_{(V\otimes U)^G} = \EE_{(V\otimes U)^G}$. 

 Using the identification 
$\EE\otimes \EE=\Rep(G\times G)$, we can write each  object 
$X\in \EE\otimes_\EE \EE$ as a $G \times G$-module with a decomposition $X=\oplus_{x\in G\times G/G}X_x$ 
such that $(g,h)X_x=X_{(g,h)x}$ for $(g,h) \in G \times G$  \cite{KO}. In particular, $X_1$ is a $G$-module, where $1$ denotes the diagonal subgroup of $G \times G$. Moreover, the induction functor $\Ind_G^{G \times G} : \EE \to \EE\otimes_\EE \EE$ is the corresponding right adjoint of the braided equivalence  $\EE \ot_\EE \EE \xrightarrow{\otimes} \EE$. In this convention, $\tilde\phi \circ (F^{V,G} \ot F^{U,G}) \circ \Ind_G^{G \times G} \stackrel{\psi}\cong  F^{V \ot U, G}$, where $\psi$ is given by  the natural isomorphism,
\begin{align*}
   (\tilde \phi \circ (F^{V,G} \ot F^{U,G}) \circ \Ind_G^{G \times G})(X) & = \Hom_{G \times G}((\Ind_G^{G \times G}(X))^*, V \ot U)  \\
    & \cong  \Hom_{G \times G}(\Ind_G^{G \times G}(X^*), V \ot U)  \\
    & \cong  \Hom_{G}(X^*, V \ot U) \\
    & =F^{V \ot U, G} (X)
\end{align*}
for $X \in \EE$. With the identification of $X^*$ and $(\Ind_G^{G \times G} (X))^*_1$, the inverse $\psi^{-1} : F^{V \ot U, G} (X) \to \Hom_{G \times G}((\Ind_G^{G \times G}(X))^*, V \ot U)$ is given by
$$\psi^{-1}(f)(yu)=yf(u)$$
for any $u \in X^*$, $y \in G \times G$ and $f \in F^{V \ot U, G} (X)$.  In particular, $\psi^{-1}(f)(yu) = f(yu)$ if $y \in G$.
Now  we need to show $\psi$ is a isomorphism of the tensor functors which requires to prove the commutativity of following diagrams for $X,Y\in \EE$ :
\begin{equation*}
\begin{tikzcd}
F^{V \ot U, G}(X)\boxtimes F^{V \ot U, G}(Y) \arrow[r, "J^{V \ot U, G}_{X,Y}"]  \arrow[d,"\psi^{-1}\boxtimes \psi^{-1}"]& F^{V \ot U, G}(X\otimes Y)\arrow[d,"\psi^{-1}"] \\
F'(X)\boxtimes F'(Y) \arrow[r, "J'_{X,Y}"] &F'(X\otimes Y) 
\end{tikzcd} \text{ and }
\begin{tikzcd}
(V \ot U)^G \arrow[r, "\e"] \arrow[rd, "\phi"'] & F^{V \ot U, G}(\uline{1}) \arrow[d, "\psi^{-1}"] \\
 & F'(\uline{1})
\end{tikzcd}
\end{equation*}
where $F'=\tilde\phi \circ (F^{V,G} \ot F^{U,G}) \circ \Ind_G^{G \times G}.$ Let $f\in F^{V \ot U, G}(X), g\in F^{V \ot U, G}(Y)$
and $u\in X^*, v\in Y^*.$  We know from the proof of Theorem \ref{theorem2} that $\psi^{-1}\circ J_{X,Y}^{V\ot U, G}$
is characterized by
 $$( \overline{\psi^{-1}\circ J_{X,Y}^{V\ot U, G}})(f\boxtimes g)(v\otimes u)=Y(fu,1)gv.$$
Since $J'=J^{V\otimes U, G\times G}=J^{V,G}\ot J^{U,G}$ we immediately see that $J'_{X,Y}$ is characterized by
$$\overline{J'_{X,Y}}(\psi^{-1}(f)\boxtimes \psi^{-1}(g))(v\otimes u)=Y(\psi^{-1}(f)u,1)\psi^{-1}(g)v=Y(fu,1)gv,$$ 
which proves the  commutativity of the first diagram. Note that the $(V \ot U)^G$-module isomorphism $\phi: (V \ot U)^G \to F'(\uline{1})$ is unique up to a scalar. Since $\psi^{-1}(\e (x \ot y))(1) = x \ot y =\phi(x \ot y)(1) $ for any $x \in V^G$ and $y \in U^G$, the second commutativity follows.

Therefore,  $\MM_{v}(\EE)$ is closed  under the product of  $\MM(\EE)$, and hence  $\MM_{v}(\EE)$ is a subgroup of  $\MM(\EE)$. Following the preceding discussion, there exists a unique $\a \in H^3(G, S^1)$ such that $(\CC_{V^G}, F^{V,G})$ is equivalent to $(\ZZ(\Vec^\a_G), \iota_\a)$. In particular, 
$\CC_{V^G}$ is equivalent to  $\ZZ(\Vec^\a_G)$ as modular tensor categories.
 
(2)  From the proof of (1), for $V \in {\bf H}_G$ and $W \in {\bf R}_G^\FF$ for some (pseudounitary) nondegenerate braided $\EE$-category $\FF$,  $[\CC_{W^G}] \in \MM_v(\FF)$  and  the pair $(\CC_{(V\ot W)^G}, F^{V\ot W, G}) \cong \CC_{V^G}\ot_\EE^{(F^{V,G},F^{W,G})}\CC_{W^G}.$ According \cite{LKW1}, $\CC_{V^G}\ot_\EE^{(F^{V,G},F^{W,G})}\CC_{W^G}$ is a minimal modular extension of $\FF$, so is $(\CC_{(V\ot W)^G}, F^{V\ot W, G})$.   Therefore, $V \ot W \in {\bf R}_G^\FF$ and $\MM_v(\FF)$ admits a $\MM_v(\EE)$-action  defined by $[\CC_{V^G}][\CC_{W^G}] :=[\CC_{(V \ot W)^G}]$  which coincides with $\MM(\EE)$-action on $\MM(\FF)$.   By \cite{LKW1}, $\MM(\FF)$ is an  $\MM(\EE)$-torsor. So, the 
 action of $\MM_{v}(\FF)$ on $\MM_{v}(\FF)$  is free. Since 
 the cardinality $|\MM(\FF)|$ is equal to $o(\MM(\EE)),$ it is immediate 
 that the number of $\MM_{v}(\EE)$-orbits on $\MM_{v}(\FF)$
 is less than or equal to the index $[\M(\EE):\M_v(\EE)].$
 
 (3) follows directly from (2).\end{proof}
 \begin{rem}
 We now explain how to associate 3-cocycle $\alpha\in Z^3(G,S^1)$ to a holomorphic vertex operator algebra $V$ satisfying conditions (V1)-(V3)  such that $\CC_{V^G}$ and $\ZZ(\Vec_G^\alpha)$ are braided equivalent. By Theorem \ref{t5.3}, $V$ is a condensable algebra in $\CC_{V^G}.$ Since $V$ is holomorphic, for every $g\in G$ there is a unique irreducible $g$-twisted $V$-module $V(g)$ up to equivalence \cite{DLM4}. 
According to \cite{DLXY}, every simple object in $(\CC_{V^G})_V$ is isomorphic to $V(g)$ for some $g\in G.$ From the discussion in Section 4, $(\CC_{V^G})_V$ is $G$-graded fusion category such that
 $((\CC_{V^G})_V)_g$ is generated by $V(g).$ The associativity isomorphism 
 $$(V(g)\boxtimes_{\Rep(V)} V(h))\boxtimes_{\Rep(V)}V(k)\to  V(g)\boxtimes_{\Rep(V)}(V(h)\boxtimes_{\Rep(V)}V(k)) $$
 determines an $\alpha\in H^3(G,S^1).$ 
 \end{rem}
 
 \begin{rem} It is definitely desirable that $\M(\EE)=\M_v(\EE)$,  but we could not supply a proof to this claim. If this is true and $\M_v(\FF)\ne \emptyset$ then $\M_v(\FF)=\M(\FF)$ is an $\M_v(\EE)$-torsor.
 \end{rem}
 
 \begin{rem} It is worthy to mention that for any $V\in {\bf H}_G,$ the inverse of $[\CC_{V^G}]$ in $\MM_{v}(\EE)$ is $[\overline{\CC_{V^G}}].$ By  Theorem \ref{t7.1}, $\overline{\CC_{V^G}}$ is braided equivalent to
 $\CC_{(V^{\otimes{m-1}})^G}$ where $m$ is the order of $[\CC_{V^G}].$ So Conjecture \ref{conjecture4.1} holds for rational vertex operator algebra $V^G$ for $V\in {\bf H}_G.$ 
 \end{rem}

 It is proved in \cite{EG} that if $G$ is solvable, then for any $\alpha\in H^3(G,S^1)$ there is a regular vertex operator algebra $V$ such that
 $\CC_V$ is braided equivalent to $\ZZ(\Vec^\a_G).$ But it is not clear to us that this $V$ can be realized as $U^G$ for some holomorphic vertex operator algebra $U$ with a faithful $G$-action. In the next section 
 we give a proof when $G$ is  a dihedral group or abelian group with at most two generators.
 
 \section{Lattice vertex operator algebras and pointed minimal extensions}
 
 We explain in this section how to use lattice vertex operator algebras to realize the pointed modular categories. In particular, if  $\ZZ(\Vec^\a_G)$ is pointed for some $\a \in H^3(G,S^1)$,
  we prove that there exists a positive definite even unimodular lattice $L$ such that $G$ can be realized as an  automorphism group of the lattice vertex operator algebra $V_L,$  $V_L^G$ is also a lattice vertex operator algebra and  $(\ZZ(\Vec^\a_G),\iota_\a)\cong (\CC_{V_L^G}, F^{V_L,G})$ as minimal modular extension of $\Rep(G)$.   
 
 For this purpose we need  to recall the construction of the vertex operator algebra $V_L$ associated to a  positive definite even lattice $L$ with a bilinear form 
 $(\cdot , \cdot)$ and its irreducible modules following \cite{D1, DL1, FLM}.  As usual we denote by $L^\circ$ the dual lattice of $L.$ Then there exists a positive even integer $m$
 and an alternating $\Z$-bilinear function
 $$c: L^{\circ}\times L^{\circ }\to \<\zeta_m\>$$
 such that $c(\alpha,\beta)=(-1)^{(\alpha,\beta)}$ for $\alpha,\beta\in L$   where $\zeta_m=e^{2\pi i/m}$ 
 (see \cite[Remark 12.18]{DL1}). In fact, $c(\alpha,\beta)=\e(\a,\beta)\e(\beta,\alpha)^{-1}$ for 
 some $\Z$-bilinear function $\e: L^{\circ}\times L^{\circ }\to \<\zeta_m\>$  for $\alpha,\beta\in L^{\circ}.$ 
 Consider the corresponding central extension $\widehat{L^{\circ}}$ of $L^{\circ}$ by the cyclic group $\langle\zeta_m\rangle$:
\[
1\rightarrow \langle\zeta_m\rangle\rightarrow \widehat{L^{\circ}}\mathop{\rightarrow}\limits^{-}L^{\circ}\rightarrow 0
\]
with commutator map $c(\cdot\,,\cdot).$ Let $e:L^\circ \to\widehat{L^\circ},\,\lambda\mapsto e_{\lambda}$ be a section such that $e_0=1$ and  $e_{\alpha}e_{\beta}=\epsilon(\alpha,\beta)e_{\alpha+\beta}$ for any $\alpha,\,\beta\in L^{\circ}$. 
We can assume that $\epsilon(\alpha,\alpha)=(-1)^{\frac{(\alpha,\alpha)}{2}}$
for any $\alpha\in L.$ Then the twisted group algebra $\C^\e[L^{\circ}]=\sum_{\alpha\in L^{\circ}}\C e^{\alpha}$ with product $e^\alpha\cdot e^{\beta}=\epsilon(\alpha,\beta)e^{\alpha+\beta}$
for $\alpha,\beta\in L^{\circ}$ is a quotient of the group algebra $\C[\widehat{L^{\circ}}]$ by identifying 
$\zeta_m\in \widehat{L^{\circ}}$ with $\zeta_m\in\C.$ 
It is easy to see that  $\C^\e[L^{\circ}]=\oplus_{i\in L^\circ/L}\C^\e[L+\lambda_i]$ where 
$\C^\e[L+\lambda_i]=\oplus_{\alpha\in L}\C e^{\lambda_i+\alpha}$ and $L^{\circ}/L=\{L+\lambda_i\mid i\in L^{\circ}/L\}.$

Let ${\frak h}=\C\otimes_\Z L$ be an abelian Lie algebra and extend the form $(\cdot,\cdot)$ to  ${\frak h}$ by $\C$-linearity.  Let $\{h_1,...,h_d\}$ be an orthonormal basis of $\frak h$ where $d$ is the rank of $L.$ Then the affine Lie algebra
 $$\widehat{\frak h}={\frak h}\otimes \C[t,t^{-1}]\oplus \C k$$
 has a unique irreducible module 
 $$M(1)=[h_i(-n)|i=1,...,d, n>0]$$
 such that $h_i(n)$ acts as $n\frac{\partial}{\partial h_i(-n)}$ if $n>0,$ as multiplication operator $h_i(n)$
 if $n<0$ and $0$ if $n=0,$ and $k$ acts as $1$ where $h(n)=h\otimes t^n$ for $h\in {\frak h}$ and $n\in\Z.$
 Set  $$V_{L^{\circ}}=M(1)\otimes \C^\e[L^{\circ}]=\bigoplus_{i\in L^{\circ}/L}V_{L+\lambda_i}$$ where 
$V_{L+\lambda_i}=M(1)\otimes \C^\e[L+\lambda_i].$ Then
$V_L$ is a vertex operator algebra and $\{V_{L+\lambda_i}\mid i\in L^{\circ}/L\}$ is a complete list of inequivalent irreducible $V_L$-modules.  Moreover, the ribbon structure of $\CC_{V_L}$ is given by 
$\theta_{V_{L+\lambda_i}}=e^{\pi i(\lambda_i,\lambda_i)}.$
For any $\alpha\in \Q\otimes_\Z L$, one can define an automorphism 
$\sigma_\a$ of  finite order of $V_L$ by setting 
$$\sigma_\a(u\otimes e^\beta)=e^{2\pi i(\alpha,\beta)}u\otimes e^\beta$$
 for $u\in M(1)$ and $\beta\in L.$ This type of  automorphism 
 will be useful in the following  discussions.
 
 We first show  that $\M_v(\EE)\cong H^3(G,S^1)$ when
 $G$ is a cyclic group or a  dihedral group by using 
 concrete lattice vertex operator algebras associated
 to the  Niemeier lattice
 $L$ of type $A_1^{24}.$ 
 
\begin{prop}\label{p7.2} Let $V,G, \FF, \EE$ be as before. If $G\cong \Z_n$ is a cyclic group or $G\cong D_{2m}$ is a dihedral  group of order $2m$ with $m$ being odd, then $\M_v(\EE)=\M(\EE)\cong H^3(G,S^1)$ and 
  $\MM_v(\FF)$ is a $\M_v(\EE)$-torsor if $\MM_v(\FF)$ is not empty.
  \end{prop}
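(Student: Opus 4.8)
The plan is to exploit the isomorphism $\M(\EE) \cong H^3(G,S^1)$ of Theorem \ref{LKW1} together with the inclusion $\M_v(\EE) \subseteq \M(\EE)$ as a subgroup from Theorem \ref{t7.1}(1). The first step is to observe that in both families the target cohomology is \emph{cyclic}: one has $H^3(\Z_n,S^1) \cong \Z_n$, and for $m$ odd the Lyndon--Hochschild--Serre spectral sequence for $D_{2m}=\Z_m\rtimes\Z_2$ degenerates (the orders $2$ and $m$ are coprime, and inversion $r\mapsto r^{-1}$ acts on $H^4(\Z_m,\Z)\cong\Z_m$ by $(-1)^2=1$, hence trivially), giving $H^3(D_{2m},S^1)\cong H^4(D_{2m},\Z)\cong \Z_2\oplus\Z_m\cong\Z_{2m}$. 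Since a subgroup of a finite cyclic group is determined by its order, it then suffices, for each such $G$, to exhibit a single holomorphic $V\in{\bf H}_G$ for which $[\CC_{V^G}]$ \emph{generates} $\M(\EE)$; the torsor assertion will follow formally.

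For the cyclic case $G=\Z_n$ I would realize $\Z_n$ inside $\Aut(V_L)$ for a positive definite even unimodular lattice $L$ (with the Niemeier lattice $N$ of type $A_1^{24}$ as the ambient holomorphic model) via the inner automorphism $\sigma_\alpha$ of order $n$ attached to $\alpha=\tfrac1n\mu$ for a suitable $\mu\in L$. The fixed-point subalgebra is again a lattice vertex operator algebra, $V_L^{\sigma_\alpha}=V_K$ with $K=\{\beta\in L\mid (\mu,\beta)\in n\Z\}$ a sublattice of index $n$, so that $\CC_{V_L^{\langle\sigma_\alpha\rangle}}=\CC_{V_K}$ is the pointed modular category with discriminant group $K^\circ/K$ of order $n^2$ and ribbon twist $q(\gamma)=e^{\pi i(\gamma,\gamma)}$. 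Since $L=L^\circ$ sits as $K\subseteq L\subseteq K^\circ$ with $L/K$ isotropic of order $n$, the chain exhibits $L/K$ as a maximal isotropic subgroup of $K^\circ/K$, matching the metric-group structure underlying $\ZZ(\Vec_{\Z_n}^\alpha)$. I would then read off $\alpha\in H^3(\Z_n,S^1)$ from the conformal weights $\tfrac12\,\mathrm{dist}(\tfrac{j}{n}\mu,L)^2$ of the $\sigma_\alpha^j$-twisted sectors and choose $\mu$ (e.g.\ a norm-$2$ generator of one $A_1$ summand) so that the resulting class is coprime to $n$, hence a generator.

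For the dihedral case I would keep the same cyclic $\sigma_\alpha$ of order $m$ and adjoin the lift $\tau$ of $-1\in O(L)$; since $\tau\sigma_\alpha\tau^{-1}=\sigma_{-\alpha}=\sigma_\alpha^{-1}$, the group $\langle\sigma_\alpha,\tau\rangle$ is a faithful copy of $D_{2m}$ on the holomorphic $V_N$. Here the role of the $A_1^{24}$ Niemeier lattice is to supply, through the conformal weight of its $\tau$-twisted sector, the extra $\Z_2$ needed beyond the $\Z_m$ already produced by the $\sigma_\alpha^j$-twisted modules, so that the two contributions together realize a generator of $\Z_{2m}\cong H^3(D_{2m},S^1)$. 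Where a single automorphism does not directly give a generator, I would combine the relevant orbifolds using the tensor-product realization of the group law on $\M_v(\EE)$ from Theorem \ref{t7.1}(1).

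The main obstacle is precisely this last identification: extracting the exact class $\alpha\in H^3(G,S^1)$ from the vertex-operator data and proving it generates. Concretely, this means matching the braided pointed category $\CC_{V_K}$ (respectively the $D_{2m}$-orbifold datum) with $\ZZ(\Vec_G^\alpha)$ under the canonical embedding $\iota_\alpha$ of Section~3, which is a computation of discriminant forms and twisted-sector conformal weights rather than a categorical subtlety. Once $\M_v(\EE)=\M(\EE)\cong H^3(G,S^1)$ is established, the torsor statement is immediate: by Theorem \ref{t7.1}(2) the free $\M_v(\EE)$-action on $\MM_v(\FF)$ has at most $[\M(\EE):\M_v(\EE)]=1$ orbit, so whenever $\MM_v(\FF)\neq\emptyset$ it consists of a single free orbit, i.e.\ $\MM_v(\FF)$ is an $\M_v(\EE)$-torsor.
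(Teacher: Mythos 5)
Your overall architecture matches the paper's: both reduce to exhibiting a single orbifold class of maximal order in the cyclic group $\MM(\EE)\cong H^3(G,S^1)$, both use the Niemeier lattice $L$ of type $A_1^{24}$ with the inner automorphism $\sigma_{\alpha_1/n}$, and both deduce the torsor statement formally from Theorem \ref{t7.1}(2). But there are two genuine gaps. First, the step you flag as ``the main obstacle'' --- extracting the class $\alpha$ from the vertex-operator data and proving it generates --- is precisely where an idea is needed, and the paper supplies one you do not: it never computes $\alpha$ itself, but bounds the Frobenius--Schur exponent of $\CC_{V_L^G}$ from below by the orders of the twists $\theta_M=e^{2\pi i\Delta_M}$ of specific simple objects, read off from twisted-sector conformal weights (e.g.\ $\theta_{V_{E-\alpha_1/n}}=e^{2\pi i/n^2}$), and then invokes the formula of \cite{NS} that $\FSexp(\ZZ(\Vec_G^\w))=\lcm_H\bigl(\ord(\w|_H)\cdot o(H)\bigr)$ over maximal cyclic subgroups $H$, which pins down $\ord(\w)=n$ (resp.\ $2m$). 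Without some such mechanism your argument is incomplete exactly at its crux.

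Second, and more seriously, your dihedral construction would fail as specified. Taking $\tau$ to be a lift of $-1\in O(L)$ makes the $(-1)$-eigenspace of the isometry $24$-dimensional, so every reflection-twisted sector $V_L(\sigma_m^i\tau)$ has lowest conformal weight $24/16=3/2$ and weights in $\frac{3}{2}+\frac{1}{2}\Z$; the twists of all simples over these sectors are $\pm 1$. Hence $\FSexp(\CC_{V_L^G})=\lcm(m^2,2)=2m^2$, and the \cite{NS} formula then forces $\ord(\w|_{\langle\sigma_m^i\tau\rangle})=1$ for every reflection, since otherwise the $2$-part of the exponent would be $4$. As restriction to a Sylow $2$-subgroup is injective on the $2$-primary part of $H^3(D_{2m},S^1)\cong\Z_{2m}$, your class has order $m$, not $2m$: it lies in the index-$2$ subgroup and is not a generator. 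Your fallback of combining orbifolds via the group law cannot repair this, because tensoring such constructions multiplies reflection-sector twists that are all $\pm1$, so every class you can produce restricts trivially to the reflections. This is exactly why the paper instead lifts the isometry $\nu$ negating only the four coordinates of the Golay-code tetrad $w=\{1,2,11,12\}$: then the $(-1)$-eigenspace is $4$-dimensional, the $\sigma_m\tau$-twisted sector has weights in $\frac{1}{4}+\frac{1}{2}\Z$, the twist $\theta_{V_L(\sigma_m\tau)^+}=i$ has order $4$, and the same exponent argument yields $\ord(\w|_{\langle\sigma_m^i\tau\rangle})=2$, whence $\ord(\w)=2m$.
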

 \begin{proof} 
 (1) $G\cong \Z_n.$ In this case  $H^3(G,S^1) \cong \Z_n$.  Therefore, it suffices to show that $\MM_{v}(\EE)$ has an element of order $n.$ Consider the holomorphic lattice 
 vertex operator algebra $V_L$ associated to the Niemeier lattice
 $L$ of type $A_1^{24}$ \cite{FLM}.  Then 
 $$L=\sum_{C\in G_{24}}\Z \frac{1}{2}\alpha_C+Q$$
 where $G_{24}$ is the Golay code based on the set $\Omega=\{1,...,24\},$ $Q=\sum_{i=1}^{24}\Z\alpha_i$ is a positive definite lattice with $(\alpha_i,\alpha_j)=2\delta_{i,j}$ and $\alpha_C=\sum_{i\in C}\alpha_i.$  
 Let $G$ be the cyclic group generated by $\sigma=\sigma_{\alpha_1/n}.$ Then $V_L^G=V_E$ where $E$ is the sublattice of $L$ given by $E=\{\alpha\in L\mid (\alpha_1,\alpha)\in n\Z\}.$ 
 Moreover, there is a  unique 
 irreducible $\sigma$-twisted module $V_L(\sigma)=M(1)\otimes \C^{\e}[L]e^{-\alpha_1/n}$ \cite{DM1}.  
 Note that $V_{E-\alpha_1/n}=M(1)\otimes \C^{\e}[E]e^{-\alpha_1/n}$ is an irreducible $V_E$-module
 with $\theta_{V_{E-\alpha_1/n}}=e^{2\pi i/n^2}.$ In particular, the order of $\tilde t=\diag(\theta_{M} \mid {M\in {\cal O}(C_{V_L^G})})$, denoted by $\FSexp(\CC_{V^G_L})$,   is a multiple of $n^2$.
 
By Theorem \ref{t7.1}, $(C_{V_L^G}, F^{V_L, G}) \cong (\ZZ(\Vec_G^\w), \iota_\w)$ as braided $\EE$-categories for some $\w \in H^3(G, S^1)$. 
By \cite{NS}, $\FSexp(\ZZ(\Vec_G^\w))$ is equal to least common multiple of $\ord(\w|_H) \cdot o(H)$, where $\w|_H$ is the restriction of $\w$ on $H$, and $H$ runs through the maximal cyclic subgroups of $G$.  It follows from  the preceding paragraph that  $\ord(\w)=n$. Therefore, $[\CC_{V_L^G}] \in \MM_v(\EE)$ is of order $n,$ as desired.  

(2) $G\cong D_{2m}$ for some odd integer $m$. Then $H^3(G,S^1)\cong\Z_{2m}.$ Recall from \cite[Theorems 10.1.2, 10.1.5]{FLM} that the Golay code $G_{24}$ is built from the Hamming codes $\CC_1$ and $\CC_2.$ 
In fact,
$$G_{24}=\<(S,S, \emptyset), (S,\emptyset, S), (T,T,T)|S\in \CC_1,T\in\CC_2\>.$$
Let $\Omega=\{1,....,24\}$ and  $w=\{1,2,11,12\}.$ Then
$|w\cap C|$ is always even for any $C\in G_{24}.$
 Define an isometry $\nu$ of $L$ such that 
$$\nu (\sum_{i=1}^{24} k_i\alpha_i)= -\sum_{i\in w}k_i\alpha_i+
\sum_{i\notin w}k_i\alpha_i$$
where $k_i\in \frac{1}{2}\Z.$  Then $(\nu(\alpha),\alpha)\in 2\Z$ for all $\alpha\in L.$
So $\nu$ satisfies the assumption in \cite{Le} and can be lifted to an automorphism $\tau$ of $V_L$ of order $2.$  Let $V_L(\tau)$ be the unique irreducible $\tau$-twisted $V_L$-module. Then 
$V_L(\tau)$ has a gradation $V_L(\tau)=\oplus_{n\geq 0}V_L(\tau)_{\frac{1}{4}+\ha n}$ as the eigenspace of
$\nu$ on ${\frak h}$ with eigenvalue $-1$ has dimension $4$ \cite{DL2}. Recall that  $\sigma_m=\sigma_{\alpha_1/m}$ is an automorphism of $V_L$ of order $m.$ It is easy to see that the group $G$ generated by $\sigma_m$ and $\tau$ is isomorphic to $D_{2m}.$ Note that
for any $h=\sigma_m^s$ with $s \not \equiv 0 \mod m$,
$$V_L(h)=\oplus_{n\geq0}V_L(h)_{\frac{1}{o(h)^2}+\frac{1}{o(h)}n}, $$
$$V_L(h\tau)=\oplus_{n\geq0}V_L(h\tau)_{\frac{1}{4}n}$$
by \cite[Theorem 5.11]{EMS}. In particular, $\theta_{V_{E-\alpha_1/m}}= e^{2 \pi i/m^2} $ and $\theta_{V_L(\sigma_m \tau)^+}= i$ in $\CC_{V_L^G}$
where $E$ is defined as in (1) and $V_L(\sigma_m \tau)^+
=\oplus_{n\geq0}V_L(\sigma_m\tau)_{\frac{1}{4}+n}.$
Therefore,   $\FSexp(\CC_{V_L^G})$ is a multiple of $4m^2$.

By Theorem \ref{t7.1} again, $(\CC_{V_L^G}, F^{V_L, G}) \cong (\ZZ(\Vec_G^\w), \iota_\w)$ as braided $\EE$-categories for some $\w \in H^3(G, S^1)$. Therefore,  $\FSexp(\ZZ(\Vec_G^\w))=\FSexp(\CC_{V_L^G})$ is a multiple of $4m^2$. By \cite{NS}, $\FSexp(\ZZ(\Vec_G^\w))$ is the least common multiple of $\ord(\w |_H) \cdot o(H)$ where $H$ runs through all the maximal cyclic subgroups of $G$. Therefore,  $\ord(\omega|_{\< \sigma_m\>})=m$ and  $\ord(\omega|_{\<  \sigma_m^i \tau\>})=2$ for some $i$.   Since $m$ is odd, $\ord(\w)=2m$ and hence $[\CC_{V_L^G}] \in \MM(\EE)$ is of order $2m$.
The proof is complete.
\end{proof}

A fusion category $\CC$ is called \emph{pointed} if $\FPdim(V)=1$ for any simple object $V$ (cf.  \cite{EGNO} for more details). For any pointed fusion category, there exists a canonical spherical structure on $\CC$ such that $\dim(V)>0$ for each nonzero object $V$, and this implies $\dim(V)=1$ if $V$ is simple. This condition on the positivity of categorical dimensions has been  assumed throughout this paper.  The set $A = \Irr(\CC)$ forms a group under the tensor product of simple objects, and $\CC$ is equivalent to $\Vec_A^\w$ for some $\w \in Z^3(A, \C^\times)$ as fusion categories. If, in addition,  $\CC$ is braided, then $A$ is abelian and  there exists a normalized 2-cochain $c: A \times A \to \C^\times$ such that the scalar $c(g,h): e(g) \ot e(h) \to  e(h) \ot e(g)$ defines a braiding on $\Vec_G^\w$. Let $\Vec_A^{(\w, c)}$ denote this braided fusion category and hence a ribbon category with the underlying spherical structure. The pair $(\w, c)$ also defines an Eilenberg-MacLane \emph{abelian} 3-cocycle, and the cohomology class $[(\w, c)]$ in the corresponding cohomology group $H^3_{ab}(A, \C^\times)$ uniquely determines the braided equivalence class of $\Vec_A^{(\w, c)}$ \cite{JS}.

Let $(A, q)$ denote the quadratic form $q: A \to \C^\times$ on $A$. Then the set $\Quad(A)$ of quadratic forms on $A$ forms a group under the pointwise multiplication. The  cohomology group $H^3_{ab}(A, \C^\times)$ is isomorphic to $\Quad(A)$  via the \emph{trace map} $[(\w, c)] \mapsto q_c$ \cite{EM1, EM2}, where $q_c(a)= c(a,a)$ for $a \in A$. The ribbon category $\Vec_A^{(\w, c)}$ is modular if and only if the quadratic form $(A,q_c)$ is  nondegenerate. 
For any quadratic form $(A, q)$,  we denote  by $\CC(A, q)$  a ribbon category $\Vec_A^{(\w, c)}$ with $q_c= q$ and so $\theta_a:=\theta_{e(a)} = q(a)$. In particular, $\CC(A, q_0)$, where $q_0(a)=1$ for all $a \in A$, is equivalent to the Tannakian category $\Rep(\hat{A})$. By \cite{NS1}, tensor equivalences of pseudounitary fusion categories preserve the canonical spherical structures. So, for any quadratic forms $(A,q)$ and $(A', q')$, the ribbon categories $\CC(A, q)$ and $\CC(A', q')$ are equivalent if and only if $(A,q)$ and $(A', q')$ are equivalent quadratic forms, i.e., there exists an isomorphism $f: A \to A'$ of groups such that $q' \circ f = q$.  

We call a rational vertex operator algebra $V$ \emph{pointed} if  every  simple module of $V$ is simple current or $\CC_V$ is pointed. In general, every lattice vertex operator algebra $V_L$ of a  positive definite even lattice $L$ is a pointed modular category given by the quadratic form $(L^\circ/L, q_L)$ where $q_L(L+\lambda) =  \theta_{V_{L+\lambda}} = e^{\pi i (\lambda, \lambda)}$.  The converse is stated in the following proposition. 
\begin{prop}\label{pointed}
 Let $\CC$ be a pointed modular category (with positive dimensions). Then $\CC \cong \CC_{V_L}$ as modular categories for some positive definite even lattice $L$. In particular, for any pointed vertex operator algebra $V$, $\CC_V \cong \CC_{V_L}$ as modular categories for some  positive definite even lattice $L$. 
\end{prop}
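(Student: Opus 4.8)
The plan is to reduce the statement to a purely lattice-theoretic existence result and then to invoke Nikulin's theory of discriminant forms. By the classification recalled above, the pointed modular category $\CC$ (with positive categorical dimensions) is braided equivalent to $\CC(A,q)$ for a unique nondegenerate quadratic form $(A,q)$ on the finite abelian group $A=\Irr(\CC)$, normalized so that $\theta_a=q(a)$; moreover $\CC_{V_L}\cong\CC(L^\circ/L,q_L)$ with $q_L(L+\lambda)=e^{\pi i(\lambda,\lambda)}$, and two such categories are braided equivalent precisely when the underlying quadratic forms are isomorphic. Hence it suffices to produce a positive definite even lattice $L$ together with an isomorphism $(L^\circ/L,q_L)\cong(A,q)$ of finite quadratic forms; that is, to realize an arbitrary nondegenerate finite quadratic form as the discriminant form of a positive definite even lattice.

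First I would pin down the only global obstruction, which is the signature. By the Gauss--Milgram formula, every even lattice with discriminant form $(A,q)$ has signature congruent modulo $8$ to the invariant $\sigma(q)\in\Z/8\Z$ determined by
\[
\sum_{a\in A}\theta_a=\sqrt{|A|}\;e^{2\pi i\,\sigma(q)/8}.
\]
Since a positive definite lattice of rank $n$ has signature $n$, the rank of any realizing lattice must lie in the residue class $\sigma(q)\pmod 8$; this is the only congruence that positive-definiteness forces, and it can always be met by choosing the rank appropriately.

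Next I would apply Nikulin's existence theorem for even lattices with prescribed signature and discriminant form: given a nondegenerate finite quadratic form $(A,q)$ and a target signature $(t_+,t_-)$, such a lattice exists whenever $t_+-t_-\equiv\sigma(q)\pmod 8$ and $t_++t_-\ge\ell(A)$, where $\ell(A)$ is the minimal number of generators of $A$, subject to additional local conditions that are triggered only when the rank equals the length of some $p$-primary part of $A$. I would therefore set $t_-=0$ and choose $n=t_+$ in the residue class $\sigma(q)\pmod 8$ with $n>\ell(A)$, so that those boundary conditions become vacuous. This produces a positive definite even lattice $L$ of signature $(n,0)$ whose discriminant form is isomorphic to $(A,q)$, whence $\CC\cong\CC(A,q)\cong\CC(L^\circ/L,q_L)\cong\CC_{V_L}$ as modular categories. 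The ``in particular'' clause is then immediate: a pointed vertex operator algebra $V$ has, by definition, $\CC_V$ a pointed modular category with positive dimensions, so the first assertion applies directly.

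The main obstacle is the realization step: ensuring that the form is the discriminant form of a \emph{positive definite} lattice rather than merely some indefinite one, which requires controlling the signature together with the $2$-adic local invariants. The device that resolves it is to over-realize, taking the rank strictly larger than $\ell(A)$ so as to kill the delicate boundary conditions in Nikulin's criterion, while using the freedom to shift $n$ by multiples of $8$ --- concretely, by orthogonally adjoining copies of a positive definite even unimodular lattice such as $E_8$, which leaves the discriminant form unchanged --- to satisfy the Gauss--Milgram congruence. The remaining point, checking that the categorical normalization $\theta_a=q(a)$ matches the lattice normalization $q_L(L+\lambda)=e^{\pi i(\lambda,\lambda)}$ under the stated Gauss--Milgram formula, is routine bookkeeping.
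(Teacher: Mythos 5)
Your proposal is correct and follows essentially the same route as the paper: reduce to the quadratic form $(A,q)$ with $q(a)=\theta_a$, pin down the signature modulo $8$ (your Gauss--Milgram sum is precisely the paper's statement that the multiplicative central charge is an $8$-th root of unity, cited there from \cite{DLN}), and then invoke Nikulin's realization result --- the paper cites exactly \cite[Cor. 1.10.2]{Ni}, whose hypothesis $\rank(L)>\ell(A)$ with $\rank(L)\equiv\sigma(q)\pmod 8$ is your ``over-realize to kill the boundary conditions'' device. No gaps; the argument matches the paper's proof step for step.
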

\begin{proof}
If $\CC$ is a pointed modular category with  positive dimensions and ribbon structure $\theta$, then $\CC \cong \CC(A, q)$ where $A$ is the abelian group $\Irr(\CC)$ under the tensor product, and a nondegenerate quadratic form $q:A \to \C^\times$ is given by $q(a) = \theta_a$.  The first multiplicative central charge $c$ of $\CC$ is an 8-th root of unity (cf. \cite[Prop.6.7 (ii)]{DLN}). Therefore, $c = e^{2 \pi i n/8}$ for some unique element $n \in \BZ_8$, called the signature of $(A,q)$. Note that $q(A) \subset S_r^1$, where $S_r^1$ is the group of all roots of unity in $\C$.  Let $\ell$ be the minimal number of generators of $A$. It follows from \cite[Corollary 1.10.2]{Ni} that there exists a positive definite even lattice $L$ with $\ell < \rank(L) \equiv n \mod 8$, and a group isomorphism  $j: A \to L^\circ/L$  such that 
$$
q(a) = e^{\pi i (j(a), j(a))} = \theta_{V_{L+j(a)}} = q_L(j(a))\,.
$$
Therefore, $(A, q)$ and $(L^\circ /L, q_L)$ are equivalent quadratic forms, and 
$$
\CC \cong \CC(A, q) \cong \CC(L^\circ /L, q_L) \cong \CC_{V_L} 
$$
as modular categories.
\end{proof}
\begin{rem} Proposition \ref{p4.3} now follows from Proposition \ref{pointed} as $\overline{\CC_{V_L}}$ is pointed.
\end{rem}

We now turn our attention to  the case when $G$ is a finite abelian group and will prove that Proposition \ref{p7.2} holds if $G$ is generated by  two elements. Set 
$$\H^{\pt}_G = \{V \in \H_G\mid V^G \text{ is pointed}\},$$  
$$
{\MM^\pt}(\EE) =\{[\CC] \in \MM(\EE) \mid \CC \text{ is pointed}\}, $$
$$
{\MM_v^\pt}(\EE)=\{[\CC_{V^G}] \in \MM_v(\EE) \mid V \in {\H_G^\pt}\}.$$
We first observe that $\H^{\pt}_G$ is closed under the tensor product of vertex operator algebras:  
\begin{lem}
Let $G$ be a finite abelian group and $\EE=\Rep(G)$. For any $U, V \in \H_G^\pt$,  $U \ot V \in {\H_G^\pt}$. Hence,  ${\MM_v^\pt}(\EE)$ is a subgroup of ${\MM_v}(\EE)$.
\end{lem}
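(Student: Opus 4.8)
The plan is to reduce the closure of $\H_G^\pt$ under tensor products to a statement about local modules, and then read off the subgroup property from the group structure of $\MM_v(\EE)$ supplied by Theorem \ref{t7.1}. By Theorem \ref{t7.1}(1), for $U, V \in \H_G$ with the diagonal $G$-action we already have $U \otimes V \in \H_G$ together with a braided equivalence of modular tensor categories
\[
\CC_{(U \otimes V)^G} \cong (\CC_{V^G} \otimes \CC_{U^G})_A^0,
\]
where $A = (F^{V,G} \otimes F^{U,G})(L_\EE)$ is the image of the regular algebra $L_\EE = \C[(G\times G)/G]^*$ of $\EE \otimes \EE = \Rep(G \times G)$ identified in Theorem \ref{t5.3}, and $(-)_A^0$ denotes the category of local $A$-modules. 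Thus it suffices to show that $(\CC_{V^G} \otimes \CC_{U^G})_A^0$ is pointed whenever $\CC_{V^G}$ and $\CC_{U^G}$ are.

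First I would note that the Deligne product $\BB := \CC_{V^G} \otimes \CC_{U^G}$ is a pointed modular category: its simple objects are the $X \boxtimes Y$ with $X, Y$ simple in the two factors, and such an object is invertible as soon as $X$ and $Y$ are. Writing $\BB \cong \CC(B, q)$ for the finite abelian group $B = \Irr(\BB)$ with nondegenerate quadratic form $q(b) = \theta_b$ in the notation of Section 8, every object of $\BB$ is a sum of invertibles, so the condensable algebra $A$ is supported on a subset $S \subseteq B$; connectedness, commutativity and $\theta_A = \id$ force $S$ to be a subgroup with $q|_S = 1$, i.e. an isotropic subgroup of $(B, q)$. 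By the classification of \'etale algebras and their local modules in pointed braided fusion categories \cite{DGNO, KO}, one has $\CC(B, q)_A^0 \cong \CC(S^\perp/S, \bar q)$, which is again pointed. Hence $\CC_{(U \otimes V)^G}$ is pointed, so $(U \otimes V)^G$ is pointed and $U \otimes V \in \H_G^\pt$.

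For the final clause, $\MM_v^\pt(\EE) = \{[\CC_{V^G}] \mid V \in \H_G^\pt\}$ is a subset of the finite group $\MM_v(\EE)$, and the product formula $[\CC_{V^G}] \cdot [\CC_{U^G}] = [\CC_{(U \otimes V)^G}]$ of Theorem \ref{t7.1}(1), combined with the previous paragraph, shows it is closed under multiplication. It is nonempty because the identity of $\MM_v(\EE)$ maps to the trivial class under the embedding $\MM_v(\EE) \hookrightarrow \MM(\EE) \cong H^3(G, S^1)$, hence is represented by some $V_0 \in \H_G$ with $\CC_{V_0^G} \cong \ZZ(\Vec_G)$; since $G$ is abelian, $\ZZ(\Vec_G)$ is pointed, so $V_0 \in \H_G^\pt$. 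A nonempty subset of a finite group closed under the product is a subgroup, giving $\MM_v^\pt(\EE) \le \MM_v(\EE)$. The one genuine ingredient is the fact used in the second paragraph, namely that condensing along an isotropic subgroup of a pointed modular category again yields a pointed modular category; this I would cite from \cite{DGNO} rather than reprove. The remaining points (holomorphicity and faithfulness of the diagonal action, and the identification of $A$ with $L_\EE$) are routine and are already subsumed in Theorems \ref{t7.1} and \ref{t5.3}.
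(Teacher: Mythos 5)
Your proposal is correct, but it takes a genuinely different route from the paper. The paper's own proof never leaves the vertex operator algebra framework: it invokes the orbifold classification (Theorem \ref{mthm1}) and the uniqueness of the irreducible $g$-twisted module $V(g)$ for holomorphic $V$, observes that pointedness of $V^G$ forces $\qdim_V(V(g))=1$, $G_{V(g)}=G$ and $\dim W_\lambda =1$ for all $\lambda$ --- i.e.\ the twisted group algebras $\C^{\alpha_{V(g)}}[G]$ and $\C^{\alpha_{U(g)}}[G]$ are commutative --- and then notes that $(U\otimes V)((g,g))\cong U(g)\otimes V(g)$ has twisted group algebra $\C^{\alpha_{U(g)}}[G]\otimes\C^{\alpha_{V(g)}}[G]$, whose restriction to the diagonal copy of $G$ is again commutative semisimple; the quantum dimension formula then makes every irreducible $(U\otimes V)^G$-module a simple current. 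You instead go through Theorem \ref{t7.1}(1), identifying $\CC_{(U\otimes V)^G}$ with the local modules $(\CC_{V^G}\otimes\CC_{U^G})_A^0$, and then quote the classification of connected \'etale algebras in a pointed modular category (support is an isotropic subgroup $S$, multiplicity free) together with the condensation formula $\CC(B,q)_A^0\cong\CC(S^\perp/S,\bar q)$. Both arguments are sound, and your treatment of the subgroup clause (nonempty finite subset closed under the product, with nonemptiness via the identity class $[\ZZ(\Vec_G)]$, which is pointed for abelian $G$) is a legitimate elaboration of what the paper leaves implicit. What the two approaches buy: the paper's computation is self-contained modulo results already quoted from \cite{DRX} and does not need the (much heavier) Theorem \ref{t7.1} for the closure statement, and it exhibits the simple currents explicitly; your argument is shorter and more conceptual, proving the stronger statement that condensation of any condensable algebra in a pointed modular category stays pointed, but it leans on Theorem \ref{t7.1}(1) and on an external classification fact --- which, as a citation quibble, is stated most explicitly in \cite{DMNO} rather than in \cite{DGNO} or \cite{KO}.
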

\begin{proof} Since $V$ is holomorphic, there is a unique irreducible  $g$-twisted $V$-module $V(g)$ up to isomorphism. Recall from Section 5 that $V(g)\circ h\cong V(g)$ for any $h\in G$ as $G$ is abelian. That is, $G_{V(g)}=G$ and $V(g)$ is a $\C^{\alpha_{V(g)}}[G]$-module with decomposition 
$$V(g)=\oplus_{\lambda\in \Lambda_{V(g)}}W_{\lambda}\otimes V(g)_{\lambda}.$$
From Theorem \ref{mthm1} we know that $\{V(g)_{\lambda}\mid g\in G, \lambda\in \Lambda\}$ gives a complete list of irreducible $V^G$-modules and 
$\qdim_{V^G}(V(g)_\l)=\dim (W_{\l})\cdot [G:G_{V(g)}]\cdot\qdim_V(V(g)).$ 
Since $V^G$ is pointed, $\qdim_{V^G}(V(g)_\l)=1$. This implies  $\qdim_V(V(g))=1$,  $[G:G_{V(g)}]=1$ and  $\dim (W_{\lambda})=1$ for all $\lambda$. Thus,  $\C^{\alpha_{V(g)}}[G]$ is a commutative semisimple  algebra for $g\in G.$ Similarly,  $\C^{\alpha_{U(g)}}[G]$ is a commutative semisimple  algebra.

Identify $G\times G$ as a subgroup of $\Aut(U\otimes V)$. Then $(U\otimes V)((g, g))\cong U(g)\otimes V(g)$ and 
$\C^{\alpha_{(U\otimes V)((g, g))}}[G\times G]= \C^{\alpha_{U(g)}}[G]\otimes \C^{\alpha_{V(g)}}[G]$ 
is a commutative semisimple algebra for $g\in G.$ Regarding $G$ as a diagonal subgroup of $G\times G$ we can realize $\C^{\alpha_{(U\otimes V)((g, g))}}[G]$ as a subalgebra of 
$\C^{\alpha_{(U\otimes V)((g, g))}}[G\times G].$ Thus, $\C^{\alpha_{(U\otimes V)((g, g))}}[G]$ is a  commutative semisimple algebra of dimension $o(G).$ Using  Theorem \ref{mthm1}
again gives 
$$\qdim_{(U\otimes V)^G}(U\otimes V)(g)_\l=\dim W_{\l}[G:G_{(U\otimes V)((g, g))}]\qdim_{U\otimes V}(U\otimes V)((g, g))=1$$
for $\lambda\in \Lambda_{(U\otimes V)((g, g))}.$
That is, $(U\otimes V)(g)_\l = (U\otimes V)((g, g))_\l$ is a simple
current and $\CC_{(U\otimes V)^G}$ is pointed, as desired.
\end{proof}

We turn to understand the group $\MM_v^{\pt}(\EE)$ for any finite abelian group $G$. 
Let 
$$
H^3(G,S^1)_{\pt} = \{\a \in H^3(G, S^1)\mid \ZZ(\Vec_G^\a) \text{ is pointed}\}.
$$
The subgroup $H^3(G, S^1)_{ab}$ of $H^3(G, S^1)$ defined in  \cite[p3471]{MN1} is shown to be the same as $H^3(G,S^1)_{\pt}$ by   \cite[Corollary 3.6]{MN1} with slightly different terminology. Therefore, $H^3(G,S^1)_{\pt}$ is a subgroup of $H^3(G, S^1)$ isomorphic to $\MM^{\pt}(\EE)$.
\begin{lem} \label{l:pt_iso}
Let $G$ be a finite abelian group and $\EE=\Rep(G)$. Then 
$$H^3(G, S^1)_{\pt} \stackrel{\Phi_G}{\cong} \MM^{\pt}(\EE).$$ 
\end{lem}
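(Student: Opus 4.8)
The plan is to show that the group isomorphism $\Phi_G: H^3(G,S^1) \to \MM(\EE)$ recalled at the end of Section 3 restricts to the asserted isomorphism. Since $\Phi_G$ sends $\alpha$ to the equivalence class $[\ZZ(\Vec_G^\alpha), \iota_\alpha]$, the whole content of the lemma is that $\Phi_G$ matches the pointed part of the source with the pointed part of the target, so almost everything reduces to tracking when $\ZZ(\Vec_G^\alpha)$ is pointed.

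First I would observe that being pointed is an invariant of the braided equivalence class of a fusion category. A braided tensor equivalence is in particular a tensor equivalence, hence induces an isomorphism of Grothendieck rings compatible with $\FPdim$; therefore, whenever $\CC_1 \cong \CC_2$ as braided fusion categories, one of them is pointed exactly when the other is. Consequently the defining condition ``$\CC$ is pointed'' in $\MM^{\pt}(\EE)$ depends only on the class $[\CC]$ and is well posed.

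Next I would combine this with the explicit description of $\Phi_G$. For $\alpha \in H^3(G,S^1)$, the class $\Phi_G(\alpha) = [\ZZ(\Vec_G^\alpha)]$ lies in $\MM^{\pt}(\EE)$ if and only if $\ZZ(\Vec_G^\alpha)$ is pointed, that is, if and only if $\alpha \in H^3(G,S^1)_{\pt}$. Because $\Phi_G$ is a bijection, every class in $\MM^{\pt}(\EE)$ arises from such an $\alpha$, so $\Phi_G$ carries $H^3(G,S^1)_{\pt}$ bijectively onto $\MM^{\pt}(\EE)$.

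Finally, using that $H^3(G,S^1)_{\pt}$ is a subgroup of $H^3(G,S^1)$ (recalled above from \cite{MN1}) together with the fact that $\Phi_G$ is a group isomorphism, the restriction of $\Phi_G$ to $H^3(G,S^1)_{\pt}$ is a group homomorphism whose image is exactly $\MM^{\pt}(\EE)$; being injective and surjective onto this image, it furnishes the claimed isomorphism, and in particular exhibits $\MM^{\pt}(\EE)$ as a subgroup of $\MM(\EE)$. I do not expect a genuine obstacle here beyond the well-definedness of pointedness on equivalence classes; the only point requiring care is to invoke the subgroup structure of $H^3(G,S^1)_{\pt}$ from \cite{MN1}, so that the set-theoretic bijection automatically upgrades to a group isomorphism.
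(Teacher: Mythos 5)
Your proposal is correct and follows essentially the same route as the paper: both restrict the isomorphism $\Phi_G$ of \cite{LKW1}, using that pointedness is invariant under braided tensor equivalence to get both inclusions, and the subgroup structure of $H^3(G,S^1)_{\pt}$ from \cite{MN1} to upgrade the bijection to a group isomorphism. Your explicit justification of the well-definedness of pointedness on equivalence classes is a point the paper leaves implicit, but the argument is the same.
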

\begin{proof}
Recall the isomorphism $\Phi_G : H^3(G, S^1) \to \MM(\EE)$ from Section 3. By definition, $\Phi_G(H^3(G, S^1)_{\pt})$ is a subgroup of $\MM^{\pt}(\EE)$. Conversely, suppose $(\ZZ(\Vec_G^\a), \iota)$ is a minimal modular extension of $\EE$ such that $\ZZ(\Vec_G^\a)$ is pointed. There exists $\a' \in H^3(G, S^1)$ such that $(\ZZ(\Vec_G^\a), \iota)$ is equivalent to $(\ZZ(\Vec_G^{\a'}), \iota_{\a'})$. In particular, $\ZZ(\Vec_G^{\a'})$ is pointed and hence $\a' \in H^3(G, S^1)_{\pt}$. Now, we have 
$$
\Phi_G(\a') =[(\ZZ(\Vec_G^{\a'}), \iota_{\a'})] =[(\ZZ(\Vec_G^\a), \iota)], $$
and so $\Phi_G(H^3(G, S^1)_{\pt})=\MM^{\pt}(\EE)$.
\end{proof}

Since $G$ is abelian, for any $\a=[\w] \in H^3(G, S^1)_{\pt}$, the embedding $\iota_\w: \EE \to \ZZ(\Vec_G^\w)$ is equivalent to the inclusion of quadratic forms $i_\w: (\hat G, q_0) \to (\G^\w, q_\w)$ where $\G^\w = \Irr(\ZZ(\Vec_G^\w))$ and $q_\w(x) = \theta_x$ for $x \in \G^\w$ (cf. \cite[Thm. 3.3]{JS}). By \cite[Pro. 5.2]{MN2} or \cite[Prop. 3.5 and Cor. 3.6]{MN1}, we have an exact sequence of abelian groups
$$
1 \to \hat{G} \xrightarrow{i_\w} \G^\w \to G \to 1\,,
$$
and its  corresponding cohomology class in $H^2(G, \hat{G})$ is determined by $\a$. The triple $(\G^\w, q_\w, i_\w)$, in fact, depends on the choice of representative $\w$ of $\a$, but its equivalence class does not. This will be explained in the following discussion. 

We denote by a triple $(\G, q, i)$ for any nondegenerate quadratic form $q: \G \to \C^\times$ on a finite abelian group $\G$ of order $|G|^2$ containing an isotropic subgroup isomorphic to $\hat{G}$ under a group monomorphism $i$. Let $b_q$ be the associated nondegenerate symmetric bicharacter of $\G$. For any coset $\ol x$ of $i(\hat{G})$ in $\G$ represented by $x$, $b_q(i(\chi), x) = b_q(i(\chi), x')$ for any $x' \in \ol x$ and $\chi \in \hat{G}$. There exists a unique element $g \in G$ such that $\chi(g) =b_q(i(\chi), x)$ for all $\chi \in \hat{G}$. The assignment $p: \G/i(\hat{G}) \to G$, $\ol x \mapsto g$, defines a group monomorphism. Since $o(\G/i(\hat{G})) = o(G)$, $p$ is an isomorphism and we have an exact sequence of abelian groups:
$$
1 \to \hat{G} \xrightarrow{i} \G \to G \to 1\,.
$$

Two such triples $(\G, q, i), (\G', q', i')$ are called \emph{equivalent} if there exists a group isomorphism  $j: \G \to \G'$ such that $q'\circ j = q$ and $i' = j \circ i$. Let $Q(G)$ be the set of equivalence classes  $[\G, q, i]$ of the triples $(\G, q, i)$. One can  define the product of two such triples $(\G, q,  i), (\G', q',  i')$  as follows: Let $H$ be  the subgroup $\{(i(\chi), i'(\ol{\chi})) \mid \chi \in \hat{G}\}$ of $\G \times \G'$,  and 
$$
H^\perp= \{(x, y) \in \G \times \G' \mid b(x, i(\chi)) b'(y, i'(\ol{\chi})) = 1 \text{ for all } \chi \in \hat{G}\}
$$
where $b$ and $b'$ are nondegenerate bicharacters associated with $q, q'$ respectively. Then the quadratic form $q\perp q'$ on $\G \times \G'$ induces  a nondegenerate quadratic form $q''$ on $H^\perp/H$, and $i'' (\chi) = (i(\chi), 1) H$ for $\chi \in \hat{G}$ defines an embedding of quadratic form from $\hat{G}$ into $H^\perp/H$. Then $Q(G)$ forms an abelian group with the multiplication given by 
$$
[\G, q, i]\cdot [\G', q', i'] = [H^\perp/H, q'', i'']\,,
$$
and we have the exact sequence 
$$1 \to \hat{G} \xrightarrow{i''} H^\perp/H \to G \to 1\,.
$$
The  quadratic form $ev: \hat{G} \times G \to \C^\times$, given by the evaluation map $ev$, with the embedding $i_0: \hat{G}\to  \hat{G} \times G,\, \chi \mapsto (\chi, 1)$ defines the identity class $[\hat{G} \times G, ev, i_0]$ of $Q(G)$. 

For any $\w, \w' \in Z^3(G, S^1)_{\pt}$ such that $\a = [\w]=[\w']$, the triples 
$(\G^\w, q_\w, i_\w)$ and $(\G^{\w'}, q_{\w'}, i_{\w'})$ are equivalent \cite[Prop. 3.5]{MN1}, and its equivalence class will be denoted by 
$[\G^\a, q_\a, i_\a]$. The function $\phi_G:  H^3(G, S^1)_{\pt} \to Q(G), \a \mapsto [\G^\a, q_\a, i_\a]$,  will be shown to be an isomorphism of groups.

Note that $Q(G)$ is a finite group. Any class $[\G, q, i]$ determines an equivalent class of pointed minimal modular  extension $[(\CC(\G, q), \iota)]$ of $\EE$ with the embedding $\iota: \EE \to  \CC(\G, q)$ given by $i$. Let $\psi: Q(G) \to \MM^{\pt}(\EE)$ be the mapping $[\G, q, i] \mapsto [(\CC(\G, q), \iota)]$ (cf. \cite[Thm. 3.3]{JS}). Since every pointed minimal modular  extension of $\EE$ is an image of $\psi$, $\psi$ is a bijection. It is easy  to see that $\psi$ preserves the products of these groups, and so   
we have proved the first assertion of following proposition.
\begin{prop}\label{p:Q(G)}
 The map $\psi: Q(G) \to \MM^{\pt}(\EE)$ is an isomorphism of groups, and we have the commutative diagram:
 $$
\begin{tikzcd}
H^3(G,S^1)_{\pt}  \arrow[r,"\Phi_G"]  \arrow[rd,"\phi_G"'] & \MM^{\pt}(\EE) \arrow[d, "{\psi^{-1}}"]   \\
& Q(G)  
 \end{tikzcd}
$$
Hence, $\phi_G$ is an isomorphism of groups. In particular, for any $\a, \a' \in H^3(G, S^1)_{\pt}$, we have
$$[\G^\a, q_\a, i_\a] \cdot [\G^{\a'}, q_{\a'}, i_{\a'}] =   [\G^{\a \a'}, q_{\a\a'}, i_{\a \a'}]\,.
$$
\end{prop}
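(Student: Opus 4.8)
The plan is to read off the entire proposition from the commutativity of the triangle, since the substantive work is already in place: the preceding paragraph shows $\psi$ is a bijection preserving products, hence a group isomorphism; Lemma~\ref{l:pt_iso} shows $\Phi_G$ restricts to an isomorphism $H^3(G,S^1)_{\pt}\xrightarrow{\sim}\MM^{\pt}(\EE)$; and the triple $(\G^\a,q_\a,i_\a)$ is a well-defined invariant of $\a$ by \cite[Prop.~3.5]{MN1}, so $\phi_G$ is at least a well-defined map. It therefore suffices to verify $\Phi_G=\psi\circ\phi_G$ and then extract the two stated consequences.

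First I would establish the triangle. Fix $\a=[\w]$ with $\w\in Z^3(G,S^1)_{\pt}$. By definition $\Phi_G(\a)=[(\ZZ(\Vec_G^\w),\iota_\w)]$, while $\phi_G(\a)=[\G^\w,q_\w,i_\w]$ gives $\psi(\phi_G(\a))=[(\CC(\G^\w,q_\w),\iota)]$, where $\iota$ is the embedding determined by $i_\w$. Now $\ZZ(\Vec_G^\w)$ is a pointed modular category of positive dimensions whose group of simple objects is $\G^\w=\Irr(\ZZ(\Vec_G^\w))$ and whose ribbon twist is precisely $q_\w(x)=\theta_x$; hence, as in the proof of Proposition~\ref{pointed}, the classification of pointed braided fusion categories \cite{JS} yields a braided ribbon equivalence $\ZZ(\Vec_G^\w)\cong\CC(\G^\w,q_\w)$. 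By the discussion preceding the proposition, under this equivalence the canonical embedding $\iota_\w$ corresponds to the isotropic inclusion $i_\w:(\hat G,q_0)\to(\G^\w,q_\w)$, so the equivalence is one of braided $\EE$-categories. Thus $(\ZZ(\Vec_G^\w),\iota_\w)\cong(\CC(\G^\w,q_\w),\iota)$, i.e. $\Phi_G(\a)=\psi(\phi_G(\a))$, which is the required identity $\psi^{-1}\circ\Phi_G=\phi_G$.

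With the triangle in hand, $\phi_G=\psi^{-1}\circ\Phi_G$ is a composite of group isomorphisms and so is itself an isomorphism. For the final displayed formula I would only note that $H^3(G,S^1)_{\pt}$ is a subgroup of $H^3(G,S^1)$, so $\a\a'\in H^3(G,S^1)_{\pt}$, and then apply the homomorphism property of $\phi_G$: $[\G^\a,q_\a,i_\a]\cdot[\G^{\a'},q_{\a'},i_{\a'}]=\phi_G(\a)\phi_G(\a')=\phi_G(\a\a')=[\G^{\a\a'},q_{\a\a'},i_{\a\a'}]$. In particular the intricate $H^\perp/H$ product defining multiplication in $Q(G)$ is identified with a single triple essentially for free, with no direct computation of $H^\perp/H$ required.

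The step I expect to be the main obstacle is the compatibility of the pointed-category equivalence $\ZZ(\Vec_G^\w)\cong\CC(\G^\w,q_\w)$ with the two embeddings of $\EE$: one must ensure it is an equivalence of braided $\EE$-categories and not merely of modular categories. This is exactly the point that forces one to invoke the identification of $\iota_\w$ with the isotropic inclusion $i_\w$ recorded before the statement (from \cite{JS} and \cite{MN1,MN2}); dropping the $\EE$-structure would make the triangle commute only in $\MM(\EE)$ after forgetting embeddings, which is too weak for the torsor applications underlying Theorem~\ref{t7.1}.
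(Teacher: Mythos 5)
Your proposal is correct and follows essentially the same route as the paper: the paper's proof likewise reduces everything to the identity $\psi^{-1}\circ\Phi_G=\phi_G$, which it derives from the definitions together with the fact $\psi^{-1}([\ZZ(\Vec_G^\a),\iota_\a])=[\G^\a,q_\a,i_\a]$ (the identification of $\iota_\w$ with $i_\w$ recorded before the statement), and then concludes that $\phi_G$ is an isomorphism and that the product formula follows from the diagram. Your write-up merely unpacks that cited fact via the Joyal--Street classification in slightly more detail, which is a faithful elaboration rather than a different argument.
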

\begin{proof}
The equality $
\psi^{-1} \circ \Phi_G = \phi_G
$ follows directly from the definitions of $\phi_G$, $\Phi_G$ and $\psi$,  and the fact that $\psi^{-1}([\ZZ(\Vec_G^\a), \iota_\a])= [\G^\a, q_\a, i_\a]$ for $\a \in H^3(G, S^1)_{\pt}$.  Since $\Phi_G$ and $\psi$ are isomorphisms of groups, and so is $\phi_G$. The last statement is a consequence of the commutative diagram.
\end{proof}

\begin{thm} \label{t:pt}
 Let $G$ be a finite abelian group and $\EE=\Rep(G)$. Then for any $\a \in H^3(G,S^1)_{\pt}$, there exists a positive definite even unimodular   lattice $E$ such that $V_E$ admits an automorphism group isomorphic to $G$ and 
 $(\CC_{V_E^G}, F^{V_E, G}) \cong (\ZZ(\Vec_G^\a), \iota_\a)$ as minimal modular extensions of $\EE$. Moreover, $\MM_v^\pt(\EE) = \MM^{\pt}(\EE) \cong H^3(G,S^1)_{\pt}$.
\end{thm}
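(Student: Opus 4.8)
The plan is to translate the purely categorical class $\a \in H^3(G,S^1)_{\pt}$ into lattice data and then construct the holomorphic lattice vertex operator algebra together with its $G$-action by hand. First I would pass from $\a$ to the triple $\phi_G(\a)=[\G^\a, q_\a, i_\a] \in Q(G)$ supplied by Proposition \ref{p:Q(G)}, so that $(\G^\a, q_\a)$ is a nondegenerate quadratic form on a group of order $o(G)^2$ and $i_\a(\hat G)\subseteq \G^\a$ is an isotropic subgroup of order $o(G)$. Since $\ZZ(\Vec_G^\a)\cong \CC(\G^\a,q_\a)$ is a pointed modular category, the lattice realization used in the proof of Proposition \ref{pointed} (Nikulin's theorem on discriminant forms) furnishes a positive definite even lattice $L$ together with an isomorphism of quadratic forms $j:(\G^\a,q_\a)\xrightarrow{\sim}(L^\circ/L,q_L)$, whence $\CC_{V_L}\cong \ZZ(\Vec_G^\a)$ as modular tensor categories.

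Next I would produce the unimodular lattice and the group action. Set $K=j(i_\a(\hat G))\subseteq L^\circ/L$; it is isotropic of order $o(G)=\sqrt{o(L^\circ/L)}$, hence a Lagrangian (maximal isotropic) subgroup, so $K=K^\perp$ with respect to the bicharacter of $q_L$. Let $E$ be the overlattice of $L$ determined by $E/L=K$. Because $K$ is isotropic, $E$ is even, and because $E^\circ/L=K^\perp=K=E/L$ we get $E^\circ=E$, so $E$ is even unimodular and $V_E$ is holomorphic. Moreover $L^\circ/E\cong (L^\circ/L)/K\cong G$. I would then let $G$ act on $V_E$ through the automorphisms $\sigma_\beta$ with $\beta$ ranging over $L^\circ$: the assignment $\beta\mapsto \sigma_\beta$ is a homomorphism $L^\circ\to\Aut(V_E)$ with kernel $E$ (since $\sigma_\beta=\sigma_{\beta'}$ iff $\beta-\beta'\in E^\circ=E$), so its image is a subgroup isomorphic to $L^\circ/E\cong G$. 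A vector $u\ot e^x$ with $x\in E$ is fixed by all these automorphisms iff $(\beta,x)\in\Z$ for every $\beta\in L^\circ$, i.e.\ iff $x\in (L^\circ)^\circ=L$; hence $V_E^G=V_L$ is again a lattice vertex operator algebra, as asserted.

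It remains to identify the minimal modular extension $(\CC_{V_E^G},F^{V_E,G})=(\CC_{V_L},F^{V_E,G})$ with $(\ZZ(\Vec_G^\a),\iota_\a)$. By Theorem \ref{t5.2}(4) this pair is a pointed minimal modular extension of $\EE$, so it equals $\psi([\G',q',i'])$ for a unique triple in $Q(G)$, and I would pin down that triple using Theorem \ref{t5.3}: the embedded regular algebra $F^{V_E,G}(\C[G]^*)$ is isomorphic to $V_E=\bigoplus_{x\in K}V_{L+x}$, so the Tannakian image $\EE_{V_E^G}$ corresponds precisely to the isotropic subgroup $K\subseteq L^\circ/L$. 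Thus the triple attached to $(\CC_{V_L},F^{V_E,G})$ is $(L^\circ/L,q_L,K)$, which under $j$ is $(\G^\a,q_\a,i_\a)$. Applying the identity $\psi\circ\phi_G=\Phi_G$ of Proposition \ref{p:Q(G)} gives $[\CC_{V_E^G},F^{V_E,G}]=\psi(\phi_G(\a))=\Phi_G(\a)=[\ZZ(\Vec_G^\a),\iota_\a]$, which is the first assertion.

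For the final statement, each such $V_E$ lies in $\H_G^{\pt}$ because $V_E$ is holomorphic and $\CC_{V_E^G}=\CC_{V_L}$ is pointed, and the computation above shows the realized classes exhaust $\Phi_G(H^3(G,S^1)_{\pt})=\MM^{\pt}(\EE)$; combined with the inclusion $\MM_v^{\pt}(\EE)\subseteq\MM^{\pt}(\EE)$ and Lemma \ref{l:pt_iso}, this yields $\MM_v^{\pt}(\EE)=\MM^{\pt}(\EE)\cong H^3(G,S^1)_{\pt}$. I expect the main obstacle to lie not in the category $\CC_{V_L}$ itself but in matching the \emph{embeddings}: one must verify that $F^{V_E,G}$ and the canonical $\iota_\a$ carry the \emph{same} isotropic datum in $Q(G)$, with no hidden twist by $\Aut(G)$ or by a change of Lagrangian subgroup. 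This is exactly where Theorem \ref{t5.3}, identifying $F^{V_E,G}(\C[G]^*)$ with $V_E$, together with the injectivity of $\psi$, does the essential work; by contrast the lattice-theoretic existence in the first step is comparatively routine, once the Lagrangian condition is seen to force unimodularity of $E$.
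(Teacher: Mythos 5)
Your construction is the same as the paper's: pass from $\a$ to the triple $[\G^\a,q_\a,i_\a]\in Q(G)$, realize $(\G^\a,q_\a)$ as a discriminant form $(L^\circ/L,q_L)$ by Nikulin's theorem, take the overlattice $E$ with $E/L=j\circ i_\a(\hat G)$, observe $E$ is even unimodular, let $G\cong L^\circ/E$ act by the automorphisms $\sigma_\beta$, and note $V_E^G=V_L$. Your unimodularity argument (isotropic of order $\sqrt{|L^\circ/L|}$ implies Lagrangian, so $E^\circ=E$) is a perfectly good variant of the paper's determinant computation $|\det E|=|\det L|/[E:L]^2=1$. Up to this point the proposal is sound and essentially identical to the paper.

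The gap is in the final identification, which is in fact the heart of the theorem. You assert that ``the triple attached to $(\CC_{V_L},F^{V_E,G})$ is $(L^\circ/L,q_L,K)$, which under $j$ is $(\G^\a,q_\a,i_\a)$,'' and you claim Theorem \ref{t5.3} together with injectivity of $\psi$ does the work. It does not: Theorem \ref{t5.3} only identifies the \emph{algebra} $F^{V_E,G}(\C[G]^*)\cong V_E$, hence only the \emph{image subgroup} $K\subseteq L^\circ/L$ of the Tannakian subcategory. An element of $Q(G)$ is a triple with a specified embedding $\hat G\to L^\circ/L$, not just a specified isotropic subgroup, and two embeddings with the same image can represent different classes --- this is exactly the phenomenon of Example \ref{e8.10}, where three inequivalent embeddings of $\EE$ into the same pointed category correspond to three distinct cohomology classes. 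Injectivity of $\psi$ cannot substitute for this, since to invoke it you must already know which element of $Q(G)$ your pair represents, which is the point at issue. The paper closes this by an explicit computation: it fixes the identification $p:L^\circ/E\to G$ via the bicharacter $b_L(j\circ i(\chi),L+x)=\chi(g)$, computes from the definition $F^{V,G}(X)=\Hom_G(X^*,V)$ that $F^{V_E,G}(\chi)=V_{-j\circ i(\chi)}$ (note the sign, coming from the dual in $X^*$), and then uses the automorphism $x\mapsto -x$ of $(L^\circ/L,q_L)$ to see that $(L^\circ/L,q_L,-j\circ i)\cong(L^\circ/L,q_L,j\circ i)$ in $Q(G)$. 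Without this character-by-character computation (or a substitute for it), your proof establishes only that $[\CC_{V_E^G},F^{V_E,G}]$ lies in $\psi(Q(G))$ with the correct image subgroup, not that it equals $\Phi_G(\a)$.
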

\begin{proof}
Let $[\G, q, i] \in Q(G)$. By \cite[Cor. 1.10.2]{Ni}, there exists a  positive definite even lattice $L$ such that $(\G, q) \stackrel{j}{\cong} (L^\circ /L, q_L)$ for some isomorphism $j$ of quadratic forms. Let $E$ be the subgroup of $L^\circ$ containing $L$ such that $j \circ i(\hat{G})=E/L$. Then we have the following row exact commutative diagram:
$$
\begin{tikzcd}
1 \arrow[r] & \hat{G}\arrow[equal]{d} \arrow[r, "i"] \arrow[r, "i"] & \G \arrow[d, "j"] \\ 
1 \arrow[r] & \hat{G} \arrow[r, "j\circ i"] & L^\circ/L  
\end{tikzcd}\,.
$$
In particular, $[\G, q, i] = [L^\circ /L, q_L, j\circ i]$ in $Q(G)$.

For any $x \in E$, $q_L(L+x) = 1$ or $(x,x)$ is a positive even integer. Thus, $E$ is a positive definite even lattice. Since $E/L \cong \hat{G}$, $[E:L]=o(G)$ and so
$$
|\det(E)| = |\det(L)|/[E:L]^2 = o(G)^2/o(G)^2=1\,.
$$
Therefore, $E$ is unimodular.

Now, we identify $L^\circ/E$ with  $G$ via the isomorphism $p: L^\circ/E \to G$ with $p(E+x)=g$ given by $b_L(j\circ i(\chi), L+x) = \chi(g)$ for all $\chi \in \hat{G}$ where $b_L$ is the associated bicharacter of $q_L$. We consider $G$ as an automorphism group of the lattice vertex operator  algebra  $V_E = M(1) \ot \C^\e[E]$ via $p$, namely  $g=\sigma_{x_g}$ where $p(E+x_g) = g$. Then $V_E^G = V_L$, and 
$$
V_E = \bigoplus _{L+x \in E/L} V_{L+x}=\bigoplus _{\chi \in \hat{G}} V_{j\circ i(\chi)}
$$
as a $V_L$-module. So, $\irr(\EE_{V_E^G}) =\{ V_{j\circ i(\chi)} \mid \chi \in \hat{G}\}$ and $F^{V_E, G}(\chi) = V_{-j\circ i(\chi)}$.
Thus, $\psi^{-1}([\CC_{V_E^G}, F^{V_E, G}])=[L^\circ/L, q_L, i']$ where $i'(\chi) = -j \circ i (\chi)$ for $\chi \in \hat{G}$. However, $(L^\circ/L, q_L, i') \cong (L^\circ/L, q_L, j \circ i)$ under the automorphism $x \mapsto -x$ in $\Aut(L^\circ/L, q_L)$. Therefore, 
$$
\psi^{-1}([(\CC_{V_E^G}, F^{V_E, G})])=[L^\circ/L, q_L, i'] = [L^\circ/L, q_L, j \circ i] = [\G, q, i]\,,
$$
and hence $\psi^{-1}(\MM_v^{\pt}(\EE)) = Q(G)$. The remaining statement follows immediately from Lemma \ref{l:pt_iso} and Proposition \ref{p:Q(G)}.
\end{proof}
Recall from \cite[p3480]{MN1} that the group epimorphism $\varphi^*: H^3(G, S^1) \to \Hom(\bigwedge^3 G, S^1)$ defined by
$$
\varphi^*([\w])(a, b, c) = \frac{\w(a,b,c)\w(b,c,a)\w(c,a,b)}{\w(b,a,c)\w(a,c,b)\w(c, b, a)}
$$
for $a, b, c \in G$ and $\w \in Z^3(G, S^1)$. The definition of $\varphi^*$ is independent of the choice of representatives of the cohomology class $[\w]$, and its kernel was characterized in \cite[Lem. 7.4]{MN1} as
$$
H^3(G, S^1)_{\pt} = \ker \varphi^*\,.
$$
This gives us  the following corollary.
\begin{coro}\label{c8.8}
If $G$ is a finite abelian group generated by two elements, then $\MM_v(\EE) \cong H^3(G, S^1)$.
\end{coro}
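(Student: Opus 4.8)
The plan is to reduce the statement to the already-proved identity $\MM_v^\pt(\EE) = \MM^\pt(\EE) \cong H^3(G,S^1)_\pt$ of Theorem~\ref{t:pt}, by showing that for a two-generated finite abelian group $G$ the pointed part $H^3(G,S^1)_\pt$ is in fact all of $H^3(G,S^1)$. The bridge is the characterization $H^3(G,S^1)_\pt = \ker\varphi^*$ recalled just above, where $\varphi^*\colon H^3(G,S^1) \to \Hom(\bigwedge^3 G, S^1)$. Hence it is enough to prove that the target group $\Hom(\bigwedge^3 G, S^1)$ is trivial, which forces $\varphi^*$ to be the zero homomorphism and therefore $\ker\varphi^* = H^3(G,S^1)$.

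First I would check that $\bigwedge^3 G = 0$. Writing $G \cong \Z_{n_1} \oplus \Z_{n_2}$, the standard splitting $\bigwedge^3(A \oplus B) = \bigoplus_{i+j=3} \bigwedge^i A \otimes \bigwedge^j B$ together with the vanishing $\bigwedge^2 C = 0$ for any cyclic group $C$ annihilates every summand, so $\bigwedge^3 G = 0$ and consequently $\Hom(\bigwedge^3 G, S^1) = 0$. Equivalently, and perhaps more transparently, each $\varphi^*([\w])$ is an alternating trilinear function $G^3 \to S^1$; by multilinearity it is determined by its values on triples of the two generators, and alternation makes every such value with a repeated generator equal to $1$. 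Since only two generators are available, every triple repeats, so $\varphi^*([\w]) \equiv 1$ for all $\w$.

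With $\varphi^* = 0$ established, I would conclude $H^3(G,S^1)_\pt = H^3(G,S^1)$, and then feed this into Theorem~\ref{t:pt} to get $\MM_v^\pt(\EE) = \MM^\pt(\EE) \cong H^3(G,S^1)_\pt = H^3(G,S^1) \cong \MM(\EE)$, the last isomorphism being $\Phi_G$. The final move is formal: the inclusions $\MM_v^\pt(\EE) \subseteq \MM_v(\EE) \subseteq \MM(\EE)$ are inclusions of finite sets whose two ends now have the same cardinality $|H^3(G,S^1)|$, so all three coincide and $\MM_v(\EE) = \MM(\EE) \cong H^3(G,S^1)$. None of these steps is hard; the only point requiring genuine care is the vanishing of $\Hom(\bigwedge^3 G, S^1)$, i.e.\ the fact that $\varphi^*$ sees exactly the alternating trilinear part of a $3$-cocycle --- a mechanism that breaks down the moment $G$ needs three generators, when $\bigwedge^3 G$ is nonzero.
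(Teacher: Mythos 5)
Your proposal is correct and follows essentially the same route as the paper: both reduce to showing $H^3(G,S^1)_{\pt}=\ker\varphi^*=H^3(G,S^1)$ by observing that an alternating trilinear form on a two-generated abelian group vanishes (the paper checks the eight generator triples directly, which is your "more transparent" variant of $\bigwedge^3 G=0$), and then invoke Theorem \ref{t:pt}. Your explicit sandwich argument $\MM_v^\pt(\EE)\subseteq\MM_v(\EE)\subseteq\MM(\EE)$ just spells out what the paper leaves implicit in "the result follows from Theorem \ref{t:pt}."
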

\begin{proof}
Suppose $G$ is generated by $a,b$. Then, for any $x,y,z \in G$ and $\a  \in H^3(G, S^1)$, we have
$\varphi^*(\a)(x, y, z)$ is a product of the values of $\varphi^*(\a)$ at the following triples:
$$
(a, a, a),  (a, a, b),  (a, b, a), (b, a, a), (b, b, a), (b, a, b), (a, b, b), (b, b, b)\,.
$$
Since they are all equal to 1, $\a \in H^3(G, S^1)_{\pt}$ by \cite[Lem. 7.4]{MN1}. So $H^3(G, S^1)_{\pt} = H^3(G, S^1)$. Now, the result follows from Theorem \ref{t:pt}.
\end{proof}
\begin{rem} The embedding  $F^{V,G}:\EE\hookrightarrow\CC_{V^G}$ 
in $(\CC_{V^G}, F^{V,G})$ plays an essential role 
in identifying $(\CC_{V^G}, F^{V,G})$ with the corresponding $\a\in H^3(G,S^1).$ It is possible that the modular tensor categories $\CC_{V^G}$ and $\CC_{U^G}$ are braided equivalent but $(\CC_{V^G}, F^{V,G})$ and $(\CC_{U^G}, F^{U,G})$ give
two different elements in group $\MM_v(\EE).$ Or equivalently, 
there are two inequivalent embeddings $\EE\hookrightarrow\CC_{V^G}.$ The following example explains this in details. 
\end{rem}
\begin{ex}\label{e8.10}{\rm
Let $L=\Gamma_{16}$ be the spin lattice of rank 16. We now give an automorphism group $G\cong \Z_2\times \Z_2$ of $V_L$  such that
there are three inequivalent embeddings: $\EE\hookrightarrow \CC_{V_L^G}$.  The main idea is to find 
an even lattice $K$ of $L$ such that $L/K\cong \Z_2\times \Z_2$
and $K^{\circ}/K\cong \Z_4\times \Z_4$ where $K^{\circ}$ is the dual lattice of $K$ as  usual. We thank Griess Jr. for providing us with such $K.$ 

 Let $\{\e_1, \cdots, \e_{16}\}$  be the standard orthonormal basis of ${\Bbb R}^{16}.$ Recall that root lattice  $L_{D_{16}}=\sum_{i=1}^{16}\Z\alpha_i$  of type $D_{16}$ where 
$\alpha_i=\e_{i}-\e_{i+1}$ for $i=1,...,15$ and $\alpha_{16}=\e_{15}+\e_{16}.$ Then $L=L_{D_{16}}+\Z w =  L_{D_{16}}\cup (L_{D_{16}}+w)$
where $w=\frac{1}{2}(\e_1+\cdots +\e_{16}).$ Also let $u=\frac{1}{2}(\e_1+\cdots +\e_8)$ and $v=\frac{1}{2}(\e_8-\e_{16}).$
Then $(u,u)=2,$  $(v,v)=\frac{1}{2}$ and $(u,v)=\frac{1}{4}.$
Let 
$$K=\{\alpha\in L\mid (u,\alpha)\in\Z \ {\rm and}\  (v,\alpha)\in\Z\}$$
be a sublattice of   $L.$ 
It is easy to see that 
$$L/K=\{K, K+2u, K+2v, K+2u+2v\}\cong \Z_2\times \Z_2$$
and 
$$K^{\circ}=\bigcup_{i,j=0}^3(K+iu+jv), \quad K^{\circ}/K=\<u+K,v+K\>\cong \Z_4\times \Z_4.$$

Recall that $V_L=M(1)\otimes \C^{\e}[L].$ We have automorphisms $\sigma_u,\sigma_v\in \Aut(V_L).$ Then $G=\<\sigma_{u},\sigma_{v}\>$ is isomorphic to $L/K\cong \Z_2\times \Z_2.$ Moreover, 
$V_L^G=V_K$ and the irreducible $V_K$-modules are $V_{K+z},$
and 
$$
\theta_{V_{K+z}}=e^{2\pi i(s^2+\frac{1}{4}(t^2+st))} = i^{(s+t)t}
$$
for 
$z=su+tv$ with $s,t=0,...,3.$  The pair $(K^\circ/K, q)$ defines a quadratic form with $q(K+z) =\theta_{V_{K+z}}$ and $\CC_{V_L^G}$ is equivalent to the pointed modular category $\CC(K^\circ/K, q)$.
Consider the generating set $\{x, y\}$ of $K^\circ/K$ where $x = u+K$ and $y = -u +v+K$. For any $K+ z \in K^\circ/K, K+z = s x + t y = (s -t)u+ tv+K$ for some $s, t =0, \dots, 3$, and so
$$
q(K+z) = \theta_{V_{K+z}}= i^{st}\,.
$$
By direct computation,  the automorphisms of the quadratic form $(K^\circ/K, q)$ are given by
\begin{equation}\label{eq:auto}
    \Aut(K^\circ/K, q)= \{f \in \Aut(K^\circ/K) \mid q = q\circ f \} = \{\pm id, \pm \kappa\} \cong \BZ_2 \times \BZ_2
\end{equation}
where $\kappa(s x +  t y) = t x +  s y$.

Recall that the Tannakian category $\EE = \Rep(G)$ is equivalent to $\CC(\hat{G}, q_0)$ where $q_0$ is the trivial quadratic form given by $q_0=1$. Let $\psi_1, \psi_2 \in \hat{G}$ such that 
$$
\psi_1(\s_{u})=-1, \, \psi_1(\s_{v})=1 \quad \text{and}\quad   \psi_2(\s_{u})=1, \, \psi_2(\s_{v})=-1\,.
$$
Then  $F^{V_L, G}(\psi_1)= 2(x+y)$ and $F^{V_L, G}(\psi_2)=  2x$ and $F^{V_L, G}(\psi_1\psi_2) =2y$ and $F^{V_L, G}$ induces an embedding $F^{V_L, G}: (\hat{G}, q_0) \to (K^\circ/K, q)$ of quadratic forms.

Now, we twist the $G$ action on $V_L$ by an automorphism $\gamma$ of $G$, that means $g\cdot a =\gamma(g)(a)$ for $g \in G$ and $a \in V_L$, and we denote this new $G$-module by $V_L^\gamma$. Note that $(V_L^\gamma)^G = V_L^G$ as vertex operator algebras. The automorphism $\gamma$ also acts on $\hat{G}$ by composition, and $F^{V_L^\gamma, G}(\psi) = F^{V_L, G}(\psi \circ \gamma^{-1})$ for $\psi \in \hat{G}$. Thus, the corresponding embedding of quadratic forms $F^{V_L^\gamma, G}: (\hat{G}, q_0) \to (K^{\circ}/K, q)$ can be expressed as  $F^{V_L^\gamma, G}(\psi) =F^{V_L, G}(\psi \circ \gamma^{-1})$ for $\psi \in \hat{G}$.  

The automorphism group of $G$ is isomorphic to $S_3$ and each automorphism $\gamma$ is completely determined by its images of $\s_{u}$ and $\s_{v}$.    The equivalence $(\CC_{V_L^G}, F^{V_L , G}) \cong (\CC_{V_L^G}, F^{V_L^\gamma, G})$ implies the embeddings of quadratic forms $F^{V_L^\gamma, G} , F^{V_L, G} : (\hat{G}, q_0) \to (K^{\circ}/K, q)$ are equivalent, i.e., there exists automorphism $f \in \Aut(K^{\circ}/K, q)$ such that $f \circ F^{V_L, G}=F^{V_L^\gamma, G}$. By \eqref{eq:auto}, $\gamma =\id_G$ or
$$
\gamma: \s_v \mapsto \s_v , \quad \s_u \mapsto \s_u \s_v \,. 
$$
Let $\delta$ be the automorphism of $G$ given by the 3-cycle $(\s_u, \s_u \s_v, \s_v)$ in $S_3$. Then 
$(\CC_{V_L^G}, F^{V_L^\delta, G})$ and $(\CC_{V_L^G}, F^{V_L^{\delta^2}, G})$  are not equivalent to $(\CC_{V_L^G}, F^{V_L , G})$. One can further show directly from \eqref{eq:auto} that $(\CC_{V_L^G}, F^{V_L^\delta, G}) \not\cong (\CC_{V_L^G}, F^{V_L^{\delta^2}, G})$. These three inequivalent embedding of $\EE$ correspond to three different cohomology classes $\a \in H^3(G, S^1)$ such that $\ZZ(\Vec_G^\a)$ are equivalent modular tensor categories. 
}
\end{ex}
Example \ref{e8.10} also gives the following result.
\begin{prop} If $G\cong\Z_2\times \Z_{2}\times \Z_{2}$ then $\M_v(\EE)=\M(\EE)\cong H^3(G,S^1).$
 \end{prop}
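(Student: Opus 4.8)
The plan is to combine the realization of all pointed classes with an index-two alternative. By Theorem \ref{t7.1}, $\MM_v(\EE)$ is a subgroup of $\MM(\EE)\cong H^3(G,S^1)$, and by Theorem \ref{t:pt} together with Lemma \ref{l:pt_iso} it contains the pointed subgroup $\MM_v^{\pt}(\EE)=\MM^{\pt}(\EE)\cong H^3(G,S^1)_{\pt}$. For $G\cong\Z_2\times\Z_2\times\Z_2$ I would first record that $H^3(G,S^1)_{\pt}=\ker\varphi^*$, where $\varphi^*\colon H^3(G,S^1)\twoheadrightarrow\Hom(\bigwedge^3 G,S^1)$ is the epimorphism of \cite{MN1}. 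Since $G$ is three-dimensional over $\mathbb{F}_2$, its exterior cube $\bigwedge^3 G$ is one-dimensional, so $\Hom(\bigwedge^3 G,S^1)\cong\Z_2$ and $H^3(G,S^1)_{\pt}$ has index $2$ in $H^3(G,S^1)$. As $2$ is prime, a subgroup of $H^3(G,S^1)$ containing $H^3(G,S^1)_{\pt}$ is either equal to it or to all of $H^3(G,S^1)$. Recalling that $\CC_{V^G}\cong\ZZ(\Vec_G^\alpha)$ is pointed precisely when $\alpha\in H^3(G,S^1)_{\pt}$, it therefore suffices to produce a single $V\in\H_G$ whose orbifold $\CC_{V^G}$ is not pointed.

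To build such a $V$ I would graft the isometry-lift construction of Proposition \ref{p7.2} onto the diagonal automorphisms of Example \ref{e8.10}. Starting from an even unimodular lattice $L$ --- for instance $\Gamma_{16}$ together with the commuting diagonal automorphisms $\sigma_u,\sigma_v$ of that example --- I would adjoin a commuting order-two automorphism $\tau$ that lifts a nontrivial isometry $\nu$ of $L$, arranged so that $G=\langle\sigma_u,\sigma_v,\tau\rangle\cong\Z_2\times\Z_2\times\Z_2$ acts faithfully on the holomorphic vertex operator algebra $V_L$. Whereas the purely diagonal orbifold $V_L^{\langle\sigma_u,\sigma_v\rangle}=V_K$ of Example \ref{e8.10} is again a lattice theory, hence pointed, the extra generator $\tau$ produces a genuine twisted sector: because $\nu\neq 1$, an irreducible $\tau$-twisted $V_L$-module $M$ has $\qdim_{V_L}(M)>1$. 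By Theorem \ref{mthm1}(2) the orbifold $V_L^G$ therefore has an irreducible module $M_\lambda$ with $\qdim_{V^G}(M_\lambda)\ge\qdim_{V_L}(M)>1$, so $\CC_{V^G}$ is not pointed and its class lies outside $H^3(G,S^1)_{\pt}$.

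Putting the two steps together, $\MM_v(\EE)$ contains $H^3(G,S^1)_{\pt}$ properly, and the index-two alternative forces $\MM_v(\EE)=H^3(G,S^1)=\MM(\EE)$. The main obstacle is the construction in the second step: one must choose $L$, $u$, $v$, and $\nu$ so that $\sigma_u$, $\sigma_v$, $\tau$ genuinely commute, all have order two, and are independent (so that the group is honestly $(\Z_2)^3$ acting faithfully), while keeping $\nu\neq 1$ so that the $\tau$-twisted module is non-invertible. Commutation of $\tau$ with the diagonal $\sigma_u,\sigma_v$ pins down the compatibility of $\nu$ with $u$ and $v$, and one must check that $\nu$ satisfies the hypotheses of \cite{Le} and that the resulting conformal weight makes $M_\lambda$ a non-simple-current $V^G$-module; this lattice-theoretic verification is the delicate, computational heart of the argument.
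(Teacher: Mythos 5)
Your reduction step is exactly the paper's: by Theorem \ref{t7.1}, $\MM_v(\EE)$ is a subgroup of $\MM(\EE)\cong H^3(G,S^1)$; by Theorem \ref{t:pt} it contains $\MM^{\pt}(\EE)\cong H^3(G,S^1)_{\pt}$; and since $\Hom(\bigwedge^3 G,S^1)\cong\Z_2$ for $G\cong\Z_2^3$, the pointed subgroup has index $2$, so it suffices to produce one $V\in{\bf H}_G$ with $\CC_{V^G}$ not pointed. Your proposed $V$ is also the paper's: $V_L$ with $L=\Gamma_{16}$ and $G=\langle\sigma_u,\sigma_v,\tau\rangle$, where $\tau$ lifts a nontrivial isometry. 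The gap is in your verification of non-pointedness. You claim that $\nu\neq 1$ forces the irreducible $\tau$-twisted $V_L$-module $M$ to have $\qdim_{V_L}(M)>1$. This is false: for a \emph{holomorphic} vertex operator algebra $V$, every irreducible twisted module has quantum dimension exactly $1$ over $V$. Indeed, $\FPdim(\CC_{V^G})=o(G)^2$ by Theorem \ref{t5.2}(1), while Theorem \ref{mthm1}(2) and $\sum_{\lambda\in\Lambda_M}(\dim W_\lambda)^2=o(G_M)$ give $o(G)^2=\sum_{M\in J}\sum_{\lambda}(\dim W_\lambda)^2[G:G_M]^2\qdim_V(M)^2=o(G)\sum_{M\in J}[G:G_M]\qdim_V(M)^2$; since holomorphicity gives exactly one irreducible $g$-twisted module for each $g\in G$, we have $\sum_{M\in J}[G:G_M]=o(G)$, and as each $\qdim_V(M)\geq 1$ this forces $\qdim_V(M)=1$ for all $M$. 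A concrete counterexample to your claim: take $G=\langle\tau\rangle\cong\Z_2$ with $\nu=-1$; then $\CC_{V_L^{\langle\tau\rangle}}\cong\ZZ(\Vec_{\Z_2}^\alpha)$ is pointed (every twisted double of a cyclic group is pointed), whereas your argument would produce a simple object of dimension greater than $1$.

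Consequently, the source of non-pointedness cannot be $\qdim_V(M)$; by the formula $\qdim_{V^G}(M_\lambda)=\dim W_\lambda\cdot[G:G_M]\cdot\qdim_V(M)$, with $G$ abelian and $V$ holomorphic (so $G_M=G$ and $\qdim_V(M)=1$), the only factor that can exceed $1$ is $\dim W_\lambda$. That is, one must show the $2$-cocycle $\alpha_M$ of the projective $G$-action on some twisted sector is nondegenerate, making the twisted group algebra $\C^{\alpha_M}[G]$ noncommutative — something that is impossible for cyclic $G$, which is exactly why the rank-three case needs an argument. The paper avoids computing this cocycle directly: it observes that $V_L^G=V_K^+$, the $\tau$-fixed-point subalgebra of the lattice vertex operator algebra $V_K$ of Example \ref{e8.10}, and then cites the classification of irreducible $V_K^+$-modules \cite{ADL} to exhibit $V_{K+u}$ as an irreducible $V_K^+$-module with $\FPdim(V_{K+u})=\qdim_{V_K^+}(V_{K+u})=2$, hence not a simple current. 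To repair your proof you would need to replace the quantum-dimension claim either by this identification or by a direct check that the projective actions of $\sigma_u,\sigma_v$ on the $\tau$-twisted module anticommute.
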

\begin{proof} From the discussion before Corollary \ref{c8.8}, we see that $H^3(G,S^1)/H^3(G,S^1)_{\pt}$ is isomorphic to $\Hom(\bigwedge^3 G, S^1) \cong \BZ_2$, and so $[H^3(G,S^1): H^3(G,S^1)_{\pt}] =2$.  By Theorem \ref{t:pt}, $\M^{\pt}_v(\EE)\cong H^3(G,S^1)_{\pt}.$ It is enough to show that
there exists $[\CC_{V_L^G}]\in \M_v(\EE)$ such that $\CC_{V_L^G}$  is not pointed.  Let $V_L$ be the lattice vertex operator algebra defined in Example \ref{e8.10}. Let $\tau\in \Aut(V_L)$ such that
$$\tau(h_{i_1}(n_1)\cdots h_{i_k}(n_k)\otimes e^{\alpha})=(-1)^kh_{i_1}(n_1)\cdots h_{i_k}(n_k)\otimes e^{-\alpha}$$
for $i_1,...,i_k\in\{1,...,16\},$ $n_i<0$ and $\alpha\in L.$
Then $G\cong\<\sigma_u,\sigma_v,\tau\>$ 
and 
$$V_L^G=V_K^+=\{a\in V_K\mid \tau(a)=a\}.$$
Since $V_{K+u}$ is an irreducible
$V_K^+$-module (see \cite{ADL}) and $\FPdim(V_{K+u})=\qdim_{V_K^+}(V_{K+u})=2$, we conclude that 
$V_{K+u}$ is not a simple current and so $\CC_{V_L^G}$ is not pointed.
\end{proof}

It is worth noting that for any nonabelian group $H$ of order 8, $\ZZ(\Rep(H)) \cong \ZZ(\Vec_{H})$ is braided equivalent to some nonpointed $\ZZ(\Vec_G^\a)$ where $G=\BZ_2^3$ (cf. \cite{GMN}). Thus, there exists an embedding $\iota: \Rep(H) \to \ZZ(\Vec_G^\a)$ so that  $(\ZZ(\Vec_G^\a), \iota)$ is a minimal modular extension of   $\Rep(H)$. In general, for any finite group $A$, $\ZZ(\Vec_A^\a)$ is a minimal modular extension of any symmetric fusion subcategory $\EE$ of  $\ZZ(\Vec_A^\a)$ with $\dim(\EE) = |A|$, i.e., a Lagrangian subcategory of $\ZZ(\Vec_A^\a)$. If $\EE$ is Tannakian, then $\EE$  is braided equivalent to $\Rep(B)$ for some uniquely determined  group $B$, and $\ZZ(\Vec_A^\a)$ is braided equivalent to $\ZZ(\Vec_B^{\a'})$ for some $\a' \in H^3(B, S^1)$.\\

\noindent{{\bf Acknowledgement:}} We would like to thank Robert Griess Jr. for his crucial suggestion on Example \ref{e8.10}. The second author would also like to acknowledge that this joint work began while he was a member in residence at MSRI in the Spring of 2020 for the program on Quantum Symmetry supported by the NSF under  the  Grant  No.  DMS-1440140.

\end{document}